\documentclass[preprint,10pt]{elsarticle}
\usepackage{amssymb}

\usepackage{amsgen,amsmath,amsxtra,amssymb,amsfonts,amsthm}

\long\def\salta#1{\relax}

\def\gku{G_{k} (u)}

\def\dys{\displaystyle}
\def\vp{\varphi}
\def\al{\alpha}
\def\ga{\gamma}

\newtheorem{theorem}{Theorem}[section]
\newtheorem{lemma}[theorem]{Lemma}
\newtheorem{proposition}[theorem]{Proposition}
\newtheorem{corollary}[theorem]{Corollary}
\newtheorem{remark}[theorem]{Remark}
\newtheorem{remarks}[theorem]{Remarks}
\newtheorem{definition}[theorem]{Definition}

\def\D{\nabla}
\def\intO{\displaystyle \int_{\Omega}}
\def\lio{L^{\infty}(\Omega)}
\def\huz{H^1_0 (\Omega)}
\def\dys{\displaystyle}
\def\rife#1{(\ref{#1})}

\def\eps{\varepsilon}
\newcommand{\elle}[1]{L^{#1}(\Omega)}
\newcommand{\loc}[1]{L^{#1}_{{\rm   loc}}(\Omega)}
\def\vp{\varphi}
\def\la{\lambda}

\newcommand{\na}{\mathbb{N}}

\newcommand{\re}{\mathbb{R}}

\newcommand{\rn}{\mathbb{R}^{N}}

\newcommand{\mis}{\text{meas }}
\newcommand{\diam}{\text{diam}}

\def\defin{\stackrel{\mbox{\tiny def}}{=}}

\def\D{\nabla}
\def\vp{\varphi}
\def\be{\begin{equation}}
\def\ee{\end{equation}}
\def\rife#1{(\ref{#1})}

\def\lp{L^{p}(\Omega)}

\def\t1p0{T^{1,p}_{0}(\Omega)}

\def\m2{M^{\frac{N(p-1)}{N-1}}(\Omega)}

\def\div{\text{div}}

\def\into{\int_{\Omega}}
\def\w-1p'{W^{-1,p'}(\Omega)}

\def\dys{\displaystyle}

\def\lp'n{(L^{p'}(\Omega))^{N}}

\def\vpla{\varphi_{\lambda}}

\journal{}

\begin{document}

\begin{frontmatter}
\title{Existence and nonexistence of solutions for singular quadratic quasilinear equations}

\author[gra]{David Arcoya}
\author[jos]{Jos\'{e} Carmona} 
\author[tom]{Tommaso Leonori} 
\author[gra]{Pedro J. Mart\'{i}nez-Aparicio} 
\author[lui]{Luigi Orsina} 
\author[fra]{Francesco Petitta}
\date{July 21, 2008}
\address[gra]{ Departamento de An\'alisis Matem\'atico, Campus Fuentenueva S/N, Universidad de Granada 18071 - Granada, Spain.  e-mail: darcoya@ugr.es, pedrojma@ugr.es}
\address[jos]{  Departamento de \'Algebra y An\'alisis Matem\'atico, Universidad de Almer\'ia, Ctra. Sacramento s/n
La Ca\~{n}ada de San Urbano 04120 - Almer\'{\i}a, Spain. e-mail: jcarmona@ual.es}
\address[tom]{   CMUC, Departamento de Matem\'{a}tica da Universidade de Coimbra, Largo D. Dinis, Apartado 3008, 3001 -- 454 Coimbra, Portugal. e-mail: leonori@mat.uc.pt}
\address[lui]{Dipartimento di Matematica,  Universit\`a di Roma ``La Sapienza'', P.le Aldo Moro 2, 00185 Roma, Italy. e-mail: orsina@mat.uniroma1.it }
\address[fra]{ 
CMA, University of Oslo, P.O. Box 1053 Blindern, NO-0316 Oslo, Norway. e-mail: francesco.petitta@cma.uio.no}

\begin{abstract}
We study both existence and nonexistence of nonnegative solutions for
nonlinear elliptic problems with singular lower order terms that have
natural growth with respect to the gradient, whose model is
$$
\begin{cases}
\displaystyle -\Delta u + \frac{|\D u|^2}{u^{\gamma}} = f & \mbox{in } \Omega,\\
\displaystyle \hfill u=0 \hfill & \mbox{on } \partial \Omega,
\end{cases}
$$
where $\Omega$ is an open  bounded subset of $\rn $,  $\gamma> 0$
and $f$ is a function which is strictly positive on every compactly
contained subset of $\Omega$. As a consequence of our main results,
we prove that the condition $\gamma<2$ is necessary and sufficient
for the existence of solutions in $\huz$ for every sufficiently
regular $f$ as above.
\end{abstract}

\begin{keyword}
Nonlinear elliptic equations, singular natural growth gradient terms, large solutions.
\end{keyword}

\end{frontmatter}

\section{Introduction}

In this paper we are going to study existence and nonexistence of nonnegative solutions for the following boundary value problem
\begin{equation}   \label{prob}
\begin{cases}
\displaystyle -\mbox{div\,}(M(x,u)\nabla u) + g(x, u) |\nabla u|^2 =
f & \mbox{in } \Omega, \\
\hfill u=0 \hfill &\mbox{on } \partial \Omega.
\end{cases}
\end{equation}
Here $\Omega$ is a bounded, open subset of $\mathbb{R}^N$, $N\geq
3$, $M(x,s)\defin(m_{ij}(x,s))$, $i,j=1,\ldots,N$ is a matrix whose
coefficients $m_{ij}:\Omega\times \mathbb{R} \longrightarrow
\mathbb{R}$ are Ca\-ra\-th\'{e}o\-dory functions (i.e.,
$m_{ij}(\cdot,s)$ is measurable on $\Omega$ for every
$s\in\mathbb{R}$, and $m_{ij}(x,\cdot)$ is continuous on
$\mathbb{R}$ for  a.e. $x\in\Omega$) such that there exist constants
$0 < \alpha \leq \beta$ satisfying
\begin{equation} \label{coe}
\alpha |\varsigma|^2 \leq M(x,s)\varsigma \cdot \varsigma \mbox{ and } |M(x,s)|\leq
\beta, \quad \mbox{ for a.e.
} x\in \Omega, \ \forall (s,\varsigma)\in \mathbb R\times \mathbb R^N.
\end{equation}
The function $g: \Omega \times (0,+\infty) \to \mathbb{R}$ is a Carath\'eodory function (i.e., $g(\cdot,s)$ is measurable on $\Omega$ for every
$s\in (0,+\infty)$, and $g(x,\cdot)$ is continuous on $(0,+\infty)$ for  a.e. $x\in\Omega$) such that
\begin{equation}\label{sign}
g(x,s) \geq 0, \quad \mbox{ for a.e. } x \in \Omega, \ \forall s > 0.
\end{equation}
We will be mainly interested to the case of a function $g$ which is singular
near $s = 0$, such as, for example, $g(x,s) = 1/s^{\gamma}$, $\gamma > 0$. On
the datum $f$, we first suppose that it belongs to $L^{\frac{2N}{N+2}}(\Omega)$
and that it satisfies
\begin{equation}\label{cona}
m_{\omega}(f)\defin\mbox{ess inf\,} \{ f(x) : x\in \omega \}>0,\quad
\forall \omega\subset\subset \Omega.
\end{equation}
Note that \rife{cona} implies that $f \geq 0$ in $\Omega$ and that $f \not\equiv 0$ in $\Omega$.

There are several papers concerned with existence and nonexistence of solutions
for \rife{prob}. If $g$ is nonsingular, that is if  $g$ is a Carath\'{e}odory
function on $\Omega\times [0,\infty)$, problem \rife{prob} has been
exhaustively studied by Boccardo, Murat and Puel \cite{Port}, Bensoussan,
Boccardo and Murat \cite{bbm} and Boccardo, Gallou\"et \cite{bg2} with data $f$
in suitable Lebesgue spaces.

\medskip

    On the contrary, as stated before, in this paper we shall focus our
attention on problem \eqref{prob} with $g(x,s)$ having a singularity at
$s=0$ (uniformly with respect to $x$).
More precisely, we look for a {\sl  distributional solution } of problem \rife{prob},
i.e.\ a function $u\in W^{1,1}_0 (\Omega)$  which solves the equation in the sense of distributions, $u > 0$ almost
everywhere in $\Omega$, and such that$ g(x,u)|\D u|^2$ in $L^1 (\Omega )$. If moreover $u\in\huz$, we say that $u$ is a {\sl finite energy
solution} for problem \rife{prob}. 
A possible motivation for the study of these problems   
arises  from  the  Calculus of Variations.
If $0 \leq f\in \elle{q}$, $q>\frac{N}{2}$ and $\gamma \in (0,1)$,  a purely formal computation shows that the Euler-Lagrange  equation  associated to the functional 
$$
J(v)= \frac12 \intO (1+|v|^{1-\gamma}) |\D v |^2 - \intO f v \, ,
$$
is 
$$
\dys -\div \big( (1+|u|^{1-\gamma}) \D u\big) + \frac{1-\gamma}{2}  \frac{ u}{|u|^{1+\gamma}}|\D u|^2 = f.
$$
Observe that this  is a nonlinear elliptic equation that involves a singular natural--growth gradient term. 

\medskip

 Therefore, it is natural to wonder whether  we can  handle  general not necessarily variational problems whose  simplest model is  
\begin{equation}\label{model}
\begin{cases}
\displaystyle -\Delta u + \frac{|\D u|^2}{u^{\gamma}} = f & \mbox{in } \Omega,\\
\displaystyle \hfill u=0 \hfill & \mbox{on } \partial
\Omega,
\end{cases}
\end{equation}
 and to determine the optimal range of $\gamma >0$ for which solutions exist.

\medskip

Recently,   existence of solutions for \rife{model} has been proved in \cite{ABM,ACM,AM} for
$0<\gamma\leq1$.  We also quote the even more recent papers
\cite{bocc} and \cite{Giachetti-Murat}. Specifically, the existence  of positive solutions of \rife{prob} is proved in \cite{bocc}
provided  $0\not\equiv f\in L^q(\Omega )$ ($q>2N/(N+2)$) with $f\geq0$ and provided
$g(x,s)=1/s^\gamma$ with $\gamma \leq 1$.
On the other hand, a related different problem is studied in
\cite{Giachetti-Murat}. Namely, if  $\mbox{\Large $\chi$}_{\{u>0\}}$
denotes the characteristic function of the set $\{ x\in \Omega :
u(x)>0\}$, $0\leq f\in \lio$, $\mu \in \re$  and $\la ,\gamma >0$,
the differential equation
$$
-\mbox{div\,}(M(x,u)\nabla u) +\la u  + \mu  \frac{|\nabla u|^2}{u^\gamma}
\mbox{\Large $\chi$}_{\{u>0\}}= f
$$
is considered. The given results about existence of nonnegative
solutions in $H_0^1(\Omega)$ depend on $\gamma$. Indeed, existence
is proved for every $\mu \in \re$ if $\gamma<1$, while the case
$\gamma\geq 1$ requires that $\mu <0$. Thus, if $\gamma\geq 1$ the
term with quadratic dependence in $\nabla u$ is negative (i.e., the
opposite assumption with respect to \rife{sign}). 
In this direction, result for similar equations can be also found in \cite{gp} and \cite{pv} (see also references cited therein).

\vspace{0.5cm}

The purpose of this paper is twofold. First of all, we will extend the above
results to a more general class of nonlinearities both in the principal part of
the operator and in the  lower order term, as well as to general, possibly
$L^1(\Omega)$, data. Then, we will give a sharp range of nonlinearities
$g(x,s)$ for which these problems  admit a solution for every datum $f\in
L^q(\Omega )$, with $q>N/2$,  satisfying \eqref{cona}.

{In order to prove our results, we will have to strengthen assumption
\rife{sign}. Specifically, for the results of existence of solutions, we will
suppose that the function $g(x,s)$ satisfies }
\begin{equation} \label{cong-2}
0\leq g(x,s)\leq h(s), \quad \mbox{for a.e. } x\in \Omega,\
\forall s>0,
\end{equation}
where $h : (0,+\infty) \to [0,+\infty)$ is a continuous nonnegative function
such that
\begin{equation}\label{cong-1}
\begin{array}{c}
\dys
\lim_{s \to 0^{+}}\,\int_{s}^{1}\,\sqrt{h(t)}\,dt < +\infty,
\\ h(s) \mbox{ is nonincreasing in a neighborhood of zero.}
\end{array}
\end{equation}
Our result of existence of finite energy solutions (proved in Section~2) is the following.

\begin{theorem} \label{teorema}\sl
Let $f$ in $L^{\frac{2N}{N+2}}(\Omega)$ be such that \rife{cona}
holds, and suppose that \rife{coe}, (\ref{cong-2}) and
(\ref{cong-1}) hold. Then there exists a finite energy solution $u$
for problem (\ref{prob}). Furthermore, 
$u \,
g(x,u)|\D u|^2 \in L^1 (\Omega)$.
\end{theorem}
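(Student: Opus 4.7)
My plan is the classical approximation-and-limit scheme for singular quasilinear problems with natural growth. I would regularize by setting $g_n(x,s)=T_n(g(x,s+\tfrac{1}{n}))$, which is a bounded Carath\'eodory function on $\Omega\times[0,+\infty)$, and invoke the Boccardo--Murat--Puel theorem \cite{Port} to produce a weak solution $u_n\in H_0^1(\Omega)\cap L^\infty(\Omega)$ of
\begin{equation*}
-\mbox{div\,}(M(x,u_n)\nabla u_n)+g_n(x,u_n)|\nabla u_n|^2=f \quad\text{in }\Omega,\qquad u_n=0 \text{ on }\partial\Omega.
\end{equation*}
Nonnegativity $u_n\ge 0$ comes from testing with $(u_n)^-$, and the basic energy estimate from testing with $u_n$ itself: dropping the nonnegative lower-order term gives
$$
\alpha\|u_n\|_{H_0^1}^{2}\le\int_\Omega f u_n\le\|f\|_{L^{2N/(N+2)}}\|u_n\|_{L^{2^*}}\le C\|u_n\|_{H_0^1},
$$
so $\{u_n\}$ is bounded in $H_0^1(\Omega)$ and converges, up to a subsequence, weakly in $H_0^1(\Omega)$ and a.e.\ in $\Omega$ to some nonnegative $u$.

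The crucial preparation is a uniform strictly positive lower bound $u_n\ge c_\omega>0$ on each $\omega\subset\subset\Omega$, with $c_\omega$ independent of $n$. My plan would be a Kirchhoff-type change of variables $z_n=\Psi_n(u_n)$, with $\Psi_n$ chosen so that $z_n$ becomes a supersolution of a uniformly elliptic equation with strictly positive right-hand side dominated by $f$ times a factor controlled uniformly in $n$ (the monotonicity of $h$ near zero and the integrability in \rife{cong-1} are what allow such a uniform control of $\Psi_n'$ on the bounded ranges of $u_n$). Comparison with the solution of this auxiliary nonsingular problem, together with the strong maximum principle applied under assumption \rife{cona}, would then yield the required uniform lower bound. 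This is the step in which the fine hypotheses \rife{cong-2}--\rife{cong-1} on the singularity and the positivity condition \rife{cona} on $f$ are genuinely used.

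Once the lower bound is in hand, on any $\omega\subset\subset\Omega$ we have $g_n(x,u_n)\le h(u_n)\le h(c_\omega)$ by the monotonicity of $h$ near zero, so the singular coefficient behaves as a bounded natural-growth nonlinearity on $\omega$. The a.e.\ convergence $\nabla u_n\to\nabla u$ on $\omega$ then follows from the classical Boccardo--Murat--Puel exponential test function $\phi_\lambda(u_n-u)\eta$, with $\eta\in C_c^\infty(\omega)$ and $\phi_\lambda(s)=se^{\lambda s^2}$ for $\lambda$ large enough (depending on $h(c_\omega)$) to absorb the quadratic term. Passage to the limit in the distributional equation then follows from Fatou's lemma applied to the nonnegative singular term, together with dominated convergence on the principal part. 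Finally, testing the approximating equations with $u_n$ and passing to the limit via Fatou yields the additional integrability $u\,g(x,u)|\nabla u|^2\in L^1(\Omega)$. The main obstacle is unambiguously the uniform lower bound on compacta: it is the step that genuinely exploits the interplay between \rife{cong-1} and \rife{cona}, and everything downstream reduces to the (by now standard) Boccardo--Murat--Puel machinery for nonsingular natural-growth equations.
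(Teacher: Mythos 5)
Your overall scheme (regularize, prove a uniform positive lower bound on compacta, get local compactness, pass to the limit) is the same as the paper's, and you correctly single out the lower bound as the decisive step. The gap is that the mechanism you propose for that step cannot work under the actual hypothesis \rife{cong-1}. You want a Kirchhoff-type map $\Psi$ so that $z_n=\Psi(u_n)$ is a supersolution of a uniformly elliptic equation with right-hand side $f$ times a factor ``controlled uniformly in $n$'', and then conclude by comparison and the strong maximum principle. The relevant transform has $\Psi'(s)={\rm e}^{\frac1\alpha\int_s^1 h(\tau)\,d\tau}$, which gives $-\mbox{div}\,(M\nabla \Psi(u_n))\geq \Psi'(u_n)\,f$. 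For a comparison argument you need $\Psi$ normalized so that $\Psi(0)=0$ (so that $z_n\in H^1_0(\Omega)$, or at least $z_n\geq 0$ on $\partial\Omega$), and this requires $\Psi'$ to be integrable at $s=0$. Hypothesis \rife{cong-1} only asks $\int_0\sqrt{h}<+\infty$; in the model $h(s)=s^{-\gamma}$ this allows $1\leq\gamma<2$, for which $\int_0 h=+\infty$ and, e.g.\ for $h(s)=s^{-3/2}$, $\int_0^\delta {\rm e}^{\frac1\alpha\int_t^1 h}\,dt=+\infty$: the normalized transform does not exist, and any admissible normalization sends the new unknown to $-\infty$ as $u_n\to 0$, i.e.\ it blows up at $\partial\Omega$, so there is no finite barrier to compare with. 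Your attribution of the role of \rife{cong-1} to ``uniform control of $\Psi_n'$'' is therefore incorrect in the interesting regime; as written, your argument proves the theorem only under the stronger assumption $h\in L^1(0,1)$ (essentially $\gamma<1$), which is the range already covered in \cite{bocc}.

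What is actually needed, and what Proposition~\ref{prop2} supplies, is an \emph{interior estimate independent of boundary data}: setting $v=\psi(u_n)$ with $\psi$ decreasing and $\psi(0^+)=+\infty$, the lower bound for $u_n$ becomes an upper bound for subsolutions of $-\mbox{div}\,(\widetilde M(x,v)\nabla v)+f\,b(v)\leq 0$, and the local $L^\infty$ bound of Theorem~\ref{thleoni} (the ``large solutions''/Keller--Osserman estimate) holds precisely because $b$ satisfies \rife{k-o} --- and $\int_0\sqrt{h}<+\infty$ is exactly the translation of \rife{k-o} through the change of variables. Your proposal never invokes any boundary-independent estimate of this type, so the central step is missing. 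Two smaller points: since $f$ is only in $L^{\frac{2N}{N+2}}(\Omega)$, the $u_n$ are not uniformly bounded in $L^\infty(\Omega)$, so on $\omega\subset\subset\Omega$ you cannot bound $g_n(x,u_n)$ by $h(c_\omega)$ everywhere (also, $h$ is only monotone \emph{near zero}); you control the coefficient only on $\{u_n\leq k\}$ and must estimate the tail $\{u_n>k\}$ separately, as the paper does by testing with $T_1(G_{k-1}(u_n))$. Likewise, Fatou's lemma only yields an inequality in the limit equation; to obtain the distributional identity you need the equiintegrability/Vitali argument for $g_n(x,u_n)|\nabla u_n|^2$ carried out in Step~3 of the paper's proof.
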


Note that the fact  $u \, g(x,u)|\D u|^2 \in L^1 (\Omega)$ implies
that the solution $u$ itself is allowed as test function
(since $f
\in H^{-1} (\Omega)$) in the weak formulation of \rife{prob} (see \rife{enerfin} in Section~2). With respect to the proof, due to the fact
that the lower order term $g(x, u) |\nabla u|^2$ is (possibly)
singular as the solution is near $0$, we will approximate the
function $g(x,s)$ by nonsingular ones $g_n(x,s)$ in such a way that
the corresponding approximated problems have finite energy solutions
$u_n$ for every $n$ in $\mathbb{N}$. The main difficulty in the
proof of Theorem \ref{teorema} relies on a suitable local uniform
estimate from below of these solutions. To do it,
it suffices by \rife{cong-2} to prove that any   supersolution $z>0$ for the
equation
$$
- \mbox{{\rm div\,}} (M(x,z) \nabla z) + h(z) |\nabla z|^2 = f \quad \mbox{in}\,\, \Omega\,
$$
is above some positive {constant  in} every $\omega\subset\subset\Omega$, i.e.
\begin{equation}\label{nabo}
{\forall \omega\subset \subset \Omega\quad\exists c_\omega>0:}\quad z(x)\geq
c_{\omega}>0 .
\end{equation}
This is proved in Proposition~\ref{prop2}
via a suitable change of variable
which turns the goal into a local $L^{\infty}$ estimate for solutions of
quasilinear problems. The  local $L^{\infty}$ estimate is then obtained using
a result of \cite{LEO} (see also the pioneering paper  \cite{Brez} and also
\cite{BoccGallVaz,GalMor}) on an equation whose model is
\begin{equation}\label{semil}
-\div (\widetilde{M}(x,v) \D v ) + f(x) b (v)=0\quad \mbox{in }
\Omega,
\end{equation}
where $\widetilde{M}$ satisfies \rife{coe} and { $b(s)$ is a function with $b(s)/s$ increasing for large $s>0$ and satisfying the Keller-Osserman condition
$$
\int^{+\infty} \frac{dt}{\sqrt{{2}\int_0^t b(\tau)d \tau} }<+\infty .
$$
}
For the convenience of the reader, the exact result that we need is
proved in the appendix (see Theorem~\ref{thleoni}). For such type of
$L^{\infty}$ estimates we refer to the ``classical''   literature on
the so-called large solutions (see, among others,
\cite{BaM,MV,MV2,Ve}) and on local estimates (see, among others, {\cite{BoccGallVaz,Brez,GalMor,LEO,jlv}}).

\smallskip

Section \ref{fur} of this paper will be concerned with some extensions of the existence result. First of all, combining the above ideas with those in \cite{Porretta} (see also
\cite{leoporr}), we handle the case of data $f$ in $\elle{1}$, proving the existence of
distributional solutions $u$ of \rife{prob}, with $u$ in $W^{1,q}_0
(\Omega)$ for every $q<\frac{N}{N-1}$. More precisely, in Section \ref{sec3}, we shall prove the
following result.

\begin{theorem}\label{th1}\sl
Let $f$ in $L^{1}(\Omega)$ be such that \rife{cona} holds and suppose that \rife{coe}, (\ref{cong-2}) and (\ref{cong-1}) hold. Then
there exists a distributional solution $u$ of \rife{prob}, with $u$ in $W^{1,q}_0 (\Omega)$, for every $ q<\frac{N}{N-1}$.
If, in addition,  there exist $s_0 > 0$ and $\mu >0$ such that
\begin{equation} \label{coerc}
g(x, s)\geq \mu \quad \mbox{for a.e. } x \in \Omega, \quad \forall  s\geq s_0,
\end{equation}
then  $u\in H^1_0 (\Omega)$ (i.e., it is a finite energy solution).
\end{theorem}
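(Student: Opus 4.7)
The plan is to argue by approximation, starting from Theorem~\ref{teorema}. I truncate the datum by setting $f_n = T_n(f) \in \lio \subset L^{\frac{2N}{N+2}}(\Omega)$, so that $f_n \to f$ in $\elle{1}$; crucially, \rife{cona} is preserved \emph{uniformly}, since $m_{\omega}(f_n) \geq m_{\omega}(f) > 0$ for every $n \geq m_{\omega}(f)$. Theorem~\ref{teorema} then yields finite energy solutions $u_n \in \huz$ of the approximated problems, and Proposition~\ref{prop2} supplies the decisive uniform local lower bound $u_n(x) \geq c_{\omega} > 0$ for every $\omega \subset\subset \Omega$, with $c_{\omega}$ depending only on $m_{\omega}(f)$ and $h$, hence independent of $n$.

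Two a priori estimates drive the argument. Testing the equation for $u_n$ by $T_k(u_n) \in \huz \cap \lio$ and exploiting \rife{coe} together with $g \geq 0$ yields the standard Boccardo--Gallou\"et bound
$$\alpha \int_{\Omega} |\D T_k(u_n)|^2 \leq k\|f\|_{\elle{1}},$$
whence $u_n$ is bounded in $\sobq$ for every $q < \frac{N}{N-1}$ via a Marcinkiewicz argument. Testing instead with $T_\delta(u_n)$, discarding the non-negative diffusion term and keeping only the portion of the singular term on $\{u_n \geq \delta\}$ where $T_\delta(u_n) = \delta$, one obtains
$$\delta \int_{\{u_n \geq \delta\}} g(x,u_n)|\D u_n|^2 \leq \delta \|f\|_{\elle{1}};$$
dividing by $\delta$ and letting $\delta \to 0^+$ by monotone convergence (the positivity $u_n > 0$ a.e.\ is essential here) produces the uniform global bound $\int_{\Omega} g(x,u_n)|\D u_n|^2 \leq \|f\|_{\elle{1}}$.

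Up to subsequences, $u_n \rightharpoonup u$ in $W^{1,q}_0(\Omega)$ for every $q < \frac{N}{N-1}$ and $u_n \to u$ a.e., and the uniform positivity passes to the limit, so $u > 0$ a.e. The Boccardo--Murat compactness scheme on the truncations gives $\D T_k(u_n) \to \D T_k(u)$ strongly in $\elle{2}$, hence $\D u_n \to \D u$ a.e. Fatou on the global uniform bound then yields $g(x,u)|\D u|^2 \in \elle{1}$. To pass to the limit in the weak formulation against any $\psi \in \C$ with $\omega = \supp \psi$, I combine two ingredients on $\omega$: the bound $u_n \geq c_{\omega}$ together with \rife{cong-2} and the monotonicity of $h$ near zero gives $g(x,u_n) \leq h(c_{\omega})$, and a supplementary test with $T_{s_0}((u_n - T)^+)/s_0$ yields the uniform tail estimate $\int_{\{u_n \geq T + s_0\}} g(x,u_n)|\D u_n|^2 \leq \int_{\{u_n > T\}} |f| \to 0$ as $T \to \infty$, producing equi-integrability and hence $L^1_{\mathrm{loc}}$-convergence of the singular term. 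Under the additional assumption \rife{coerc}, $g(x,u_n) \geq \mu$ on $\{u_n \geq s_0\}$, so the global estimate forces $\mu \int_{\{u_n \geq s_0\}} |\D u_n|^2 \leq \|f\|_{\elle{1}}$; coupling this with $\alpha \int_{\Omega} |\D T_{s_0}(u_n)|^2 \leq s_0 \|f\|_{\elle{1}}$ delivers a uniform $\huz$ bound, and weak lower semicontinuity gives $u \in \huz$.

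The main obstacle is the passage to the limit in the singular lower order term: with only sub-$H^1$ control on $\{u_n\}$ and the possibility that $h$ blows up at zero, $L^1_{\mathrm{loc}}$-compactness of $g(x,u_n)|\D u_n|^2$ is far from automatic. It is the uniform local positivity $u_n \geq c_{\omega}$---the deepest ingredient inherited from Theorem~\ref{teorema} via Proposition~\ref{prop2}---that confines the singularity to a region where $h$ is bounded and rescues the argument.
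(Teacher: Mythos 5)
Your overall strategy coincides with the paper's: approximate by $f_n=T_n(f)$, invoke Theorem~\ref{teorema} for the approximating problems, use the uniform local lower bound of Proposition~\ref{prop2} to control $g(x,u_n)$ on the sets $\{u_n\le k\}$ locally, derive the $T_k$-estimate and the global bound on the quadratic term, and split the singular term into $\{u_n\le k\}$ and $\{u_n\ge k\}$ for the Vitali argument; the final $\huz$ bound under \rife{coerc} is also exactly the paper's computation.

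The gap is in the sentence asserting that ``the Boccardo--Murat compactness scheme on the truncations gives $\D T_k(u_n)\to\D T_k(u)$ strongly in $\elle2$''. What the Boccardo--Murat result \cite{BM} yields from the $L^1$-bound on the right-hand side is the a.e.\ convergence of $\D u_n$; it does \emph{not} yield strong $L^2$ (not even $L^2_{\rm loc}$) convergence of $\D T_k(u_n)$, and a.e.\ convergence together with the uniform bound $\alpha\intO|\D T_k(u_n)|^2\le k\|f\|_{\elle1}$ does not imply the equiintegrability of $|\D T_k(u_n)|^2$. That equiintegrability is precisely what your splitting requires on $E\cap\{u_n\le k\}$, where you bound $\int_E g(x,u_n)|\D u_n|^2$ by $h(c_\omega)\int_E|\D T_k(u_n)|^2$: without it, smallness of $|E|$ gives nothing uniform in $n$, and the $L^1_{\rm loc}$ convergence of the quadratic term does not close. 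This local strong convergence of the truncations is the hardest step of the paper's argument (Lemma~\ref{lemma}(iii)); it is obtained by testing with $\vp_{\la}(w_n)\phi$, where $w_n=T_{2k}[u_n-T_l(u_n)+T_k(u_n)-T_k(u)]$, in the spirit of Leone--Porretta, and the exponential weight $\vp_\la$ absorbs the quadratic term precisely thanks to the local bound $g(x,u_n)\le c_k(\omega_\phi)$ on $\{u_n\le k\}$. In other words, the lower bound $u_n\ge c_\omega$ must be used twice: once inside the compactness proof for the truncations and once again in the equiintegrability splitting. You correctly identified the lower bound as the decisive ingredient, but you must actually carry out this truncation-compactness argument rather than cite it as standard.
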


On the other hand, in Section \ref{nodi}, we will also provide an
analogous of Theorem  \ref{teorema} involving more general
differential operators whose principal part is not in divergence
form and data in $\elle{q}$ with $q>\frac N2$. Namely, we consider
the following problem
\be\label{nodive}
\begin{cases}
\displaystyle -\sum_{i,j=1}^N  a_{ij}(x) \frac{\partial^2
u}{\partial x_i\partial x_j}(x) + \sum_{i=1}^N b_i (x)
\frac{\partial u}{\partial x_i}(x) + g(x, u) |\nabla u|^2 =
f & \mbox{in } \Omega, \\
\hfill u=0 \hfill &\mbox{on } \partial \Omega,
\end{cases}
\ee where the coefficients $a_{ij} (x)$ satisfy the ellipticity
condition \be\label{ell} 0<\alpha |\varsigma|^2 \leq \sum_{i,j=1}^N
a_{ij}(x) \varsigma_i \varsigma_j \leq \beta |\varsigma|^2, \quad
\forall \varsigma \in \rn\,, \ee for some $0<\alpha \leq \beta$. We
prove the following result.

\begin{theorem}\label{notdiv}
Suppose that   $a_{ij} \in W^{1,\infty} (\Omega)$ satisfy
\rife{ell}, and that $b_i \in \lio$. Assume that $f(x)$ satisfies
\rife{cona} and belongs to $\elle{q}$  with $q>\frac{N}{2}$. Suppose
moreover that $g(x,s)$ satisfies \rife{sign}, \rife{cong-2} (with
$h$ such that \rife{cong-1} holds). Then  there exists a solution
$u\in \huz\cap\lio$ for \rife{nodive}. Furthermore, $g(x,u)|\D
u|^2\in L^1(\Omega)$.
\end{theorem}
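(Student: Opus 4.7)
The plan is to reduce \eqref{nodive} to a divergence-form problem covered by the scheme of Theorem~\ref{teorema}, and then to carry out essentially the same approximation argument, exploiting the stronger integrability $f\in\elle{q}$, $q>N/2$, to get boundedness. Since $a_{ij}\in W^{1,\infty}(\Omega)$, one has
$$-\sum_{i,j=1}^N a_{ij}(x)\frac{\partial^2 u}{\partial x_i\partial x_j} = -\text{div}(A(x)\nabla u)+\tilde B(x)\cdot\nabla u,$$
with $A=(a_{ij})$ satisfying \eqref{coe} (up to renaming $\alpha,\beta$, by \eqref{ell}) and $\tilde B_i:=b_i+\sum_{j}\partial_j a_{ij}\in L^\infty(\Omega)$. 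Hence \eqref{nodive} rewrites as
$$-\text{div}(A(x)\nabla u)+\tilde B(x)\cdot\nabla u+g(x,u)|\nabla u|^2=f\quad\text{in }\Omega,$$
i.e.\ exactly the class of singular quasilinear equations treated in Theorem~\ref{teorema}, plus a bounded first-order drift that will be absorbed by Young's inequality at every step.

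Next I introduce the approximations $g_n(x,s):=T_n\!\bigl(g(x,s+\tfrac{1}{n})\bigr)$, which are bounded Carath\'eodory functions satisfying \eqref{sign} and \eqref{cong-2}. By the classical theory \cite{Port,bbm} for quadratic natural growth with bounded data, the corresponding approximate problem admits solutions $u_n\in\huz\cap\lio$. The key uniform estimates are then obtained as follows. Testing with $G_k(u_n)$ for $k\geq 0$, the nonnegative lower-order term $g_n(x,u_n)|\nabla u_n|^2 G_k(u_n)\geq 0$ is discarded, the drift is absorbed into the principal part via Young's inequality, and, since $q>N/2$, Stampacchia's method yields $\|u_n\|_{\lio}\leq C$ uniformly in $n$. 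Second, proceeding exactly as in Proposition~\ref{prop2}, the Kirchhoff-type change of variable associated with the Keller--Osserman integral $\int_0^1\sqrt{h(t)}\,dt<+\infty$ reduces the local lower bound on $u_n$ to a local $\lio$ estimate for an equation of the form \eqref{semil}; the extra drift $\tilde B\cdot\nabla u_n$ is of lower order with respect to the Keller--Osserman term and is still covered by Theorem~\ref{thleoni}. This provides, for every $\omega\subset\subset\Omega$, a constant $c_\omega>0$ with $u_n\geq c_\omega$ in $\omega$, so that $g_n(x,u_n)\leq h(c_\omega)$ in $\omega$.

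Finally, taking $u_n$ itself as test function (admissible since $u_n\in\lio$) and absorbing the drift term via Young, one obtains
$$\alpha\intO|\nabla u_n|^2 + \intO u_n\,g_n(x,u_n)|\nabla u_n|^2\leq C,$$
so that $(u_n)$ is bounded in $\huz$ and $(u_n g_n(x,u_n)|\nabla u_n|^2)$ in $L^1(\Omega)$. Up to a subsequence, $u_n\rightharpoonup u$ weakly in $\huz$ and a.e.\ in $\Omega$, and the local uniform lower bound passes to the limit. Strong convergence in $\huz$ is established by the Boccardo--Murat test function $(u_n-u)\,e^{\lambda(u_n-u)^2}$ with $\lambda$ large enough to absorb the quadratic gradient term; Vitali's theorem, applied on every $\omega\subset\subset\Omega$ and using $g(x,u)\leq h(c_\omega)$ there, identifies the limit of the singular term and yields $g(x,u)|\nabla u|^2\in L^1(\Omega)$ together with the boundary condition.

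The main obstacle, as in the divergence-form case, is the local lower bound of Proposition~\ref{prop2}: one must check that the Kirchhoff-type linearization that underpins it still accommodates the bounded drift $\tilde B\cdot\nabla u_n$, so that Theorem~\ref{thleoni} can be invoked on the transformed equation. Once this is verified, the remainder of the argument follows the divergence-form proof of Theorem~\ref{teorema} essentially verbatim, with the $L^\infty(\Omega)$ bound replacing the merely finite-energy estimate thanks to $q>N/2$.
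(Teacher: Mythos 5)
Your proposal follows the paper's own proof essentially step for step: the same rewriting of the non-divergence operator as a divergence-form operator plus the bounded drift $P_i=b_i+\sum_{j}\partial a_{ij}/\partial x_j\in\lio$, the same approximation scheme with an a priori $\lio$ bound, the same reduction of the local lower bound to Theorem~\ref{thleoni} (whose appendix version is indeed stated with a bounded first-order term, so the drift causes no trouble there), and the same $\vp_{\la}$-type argument for local strong $H^1$ compactness followed by Vitali. The only cosmetic deviations are your choice of approximation $g_n(x,s)=T_n(g(x,s+\frac1n))$ — which, unlike the paper's $g_n$ from \eqref{condult}, is not literally dominated by $h(s)$, so you should adopt the paper's truncation (or the monotone envelope of $h$) to keep the supersolution comparison in Proposition~\ref{prop2} valid — and your use of $G_k(u_n)$ with Stampacchia's iteration in place of the exponential test function of \cite{BMP} for the $\lio$ estimate; both are harmless.
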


\smallskip

We are also concerned with nonexistence of positive solutions for problem \rife{prob} for data
$f$ in $L^q(\Omega)$ for some $q>\frac N2$, with $f \geq 0$ and $f\not\equiv 0$.
In contrast with the previous existence results, we will assume in this case
that the nonlinearity $g(x,s)$ is above a function $h(s)$ whose square root is
not integrable in $(0,1)$. Specifically, we assume that
\begin{equation}    \label{hcero}
0\leq h(s) \leq g(x,s), \quad  \mbox{for a.e. } x\in \Omega, \ \forall s>0,
\end{equation}
where $h:(0,+\infty)\to [0,+\infty)$ is a nonnegative continuous function  such
that
\begin{equation}\label{contador}
\lim_{s \to 0^{+}}\,h(s) = +\infty,
\quad
\lim_{s \to 0^{+}}\,\int_{s}^{1}\,\sqrt{h(t)}\,dt = +\infty,
\end{equation}
and
\begin{equation}\label{pereiro}
\displaystyle \lim_{s\to 0^+} \sqrt{h(s)}\  {\rm e}^{\displaystyle
\int_1^s \sqrt{h(t)}dt} = h_0\geq 0.
\end{equation}
Among  others, we are going to prove in Section~4 that if
$\lambda_1(f)$ denotes the first positive eigenvalue of the
laplacian operator $-\Delta$ with zero Dirichlet boundary conditions
and weight $f\in L^q(\Omega )$, ($q>N/2$), then the following result
holds.

\begin{theorem} \label{teoremah}\sl
Let $f$ in $L^q(\Omega )$, with $q>\frac{N}{2}$, be such that $f \geq 0$ and $f\not\equiv 0$, and assume that \rife{coe}, \rife{hcero},
\rife{contador}, \rife{pereiro} hold. If $\lambda_1(f)>
\frac{\beta}{\alpha}$, then \rife{prob} does not have any finite energy solution.
\end{theorem}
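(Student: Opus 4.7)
The plan is to argue by contradiction: suppose $u\in \huz$ is a finite energy solution of \rife{prob} and show that this forces $\lambda_1(f)\le 1\le \beta/\al$. The main tool is a change of variable suggested by \rife{pereiro}. Define
\[
\phi(s)\defin \exp\!\left(\int_1^s \sqrt{h(t)}\,dt\right),\qquad s>0,
\]
with $\phi(0)\defin 0$. The divergence in \rife{contador} forces $\int_1^s \sqrt{h}\,dt\to -\infty$ as $s\to 0^+$, so the extension is continuous; and \rife{pereiro} is exactly the statement that $\phi'(s)=\sqrt{h(s)}\,\phi(s)\to h_0$ as $s\to 0^+$. Consequently $\phi\in C^1([0,+\infty))$ is Lipschitz on every bounded interval, and satisfies the identity $\phi'(s)^2=h(s)\phi(s)^2$.

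I would next establish $u\in \lio$. Since $g(x,u)|\nabla u|^2\ge 0$, the equation implies the weak inequality $-\mbox{div}(M(x,u)\nabla u)\le f$, and a classical Stampacchia truncation argument using $f\in \elle{q}$ with $q>N/2$ and $u\ge 0$, $u=0$ on $\partial\Omega$ yields $u\in \lio$. As a consequence, both $\phi(u)$ and $\phi(u)^2$ belong to $\huz\cap \lio$, and since $g(x,u)|\nabla u|^2\in \luno$, they are admissible test functions in the weak formulation of \rife{prob}. Testing with $\xi=\phi(u)^2$, using $\nabla\xi=2\sqrt{h(u)}\,\phi(u)^2\nabla u$, one obtains
\[
2\intO \sqrt{h(u)}\,\phi(u)^2\, M(x,u)\nabla u\cdot\nabla u\,dx + \intO g(x,u)|\nabla u|^2\phi(u)^2\,dx = \intO f\phi(u)^2\,dx.
\]
The first integrand is nonnegative by \rife{coe}; dropping it, then using $g\ge h$ from \rife{hcero} together with the identity $\phi'(s)^2=h(s)\phi(s)^2$, one gets
\[
\intO |\nabla\phi(u)|^2\,dx\le \intO f\phi(u)^2\,dx.
\]

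Finally, since $u>0$ a.e.\ and $f\not\equiv 0$, we have $\intO f\phi(u)^2\,dx>0$, so the variational characterization $\lambda_1(f)=\inf_{\xi}\{\intO|\nabla\xi|^2\,dx/\intO f\xi^2\,dx\}$ applied to $\phi(u)\in \huz$ yields
\[
\lambda_1(f)\intO f\phi(u)^2\,dx \le \intO |\nabla\phi(u)|^2\,dx \le \intO f\phi(u)^2\,dx,
\]
i.e.\ $\lambda_1(f)\le 1\le \beta/\al$, contradicting the hypothesis. The main technical obstacle is the admissibility of $\phi(u)^2$ as a test function, which hinges on the $L^\infty$ bound for $u$ and on the Lipschitz regularity of $\phi$ at $0$: this is exactly where \rife{contador} (ensuring $\phi(0)=0$, hence the vanishing boundary trace of $\phi(u)$) and \rife{pereiro} (ensuring the finite right derivative $\phi'(0^+)=h_0$) enter. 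The ellipticity constants appear only through the harmless inequality $\beta/\al\ge 1$; the argument in fact delivers the sharper bound $\lambda_1(f)\le 1$.
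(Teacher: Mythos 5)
Your argument is correct, and it takes a genuinely different route from the paper's. Your $\phi$ is exactly the paper's auxiliary function $\sigma(s)={\rm e}^{\int_1^s\sqrt{h(t)}\,dt}$, but the paper tests with a more elaborate function $\varphi(u)$, where $\varphi(s)=G(s)^{-1}\int_0^sG(t)[\sigma'(t)]^2\,dt$ is constructed (Lemma \ref{lema1h}) to solve the linear ODE $\varphi'+\frac{h}{\beta}\varphi=[\sigma']^2$ and to satisfy $\varphi\leq\beta\sigma^2$. Since $\varphi'$ need not have a sign, the paper must keep the principal part and extracts $\alpha\intO|\nabla\sigma(u)|^2$ from it, using the two-sided bound $\alpha|\varsigma|^2\leq M\varsigma\cdot\varsigma\leq\beta|\varsigma|^2$ and thereby paying the factor $\beta/\alpha$. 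You instead test with $\sigma(u)^2$, observe that $(\sigma^2)'=2\sqrt{h}\,\sigma^2\geq0$ makes the principal part nonnegative so it can simply be discarded, and extract $\intO|\nabla\sigma(u)|^2$ from the \emph{lower-order} term via $g\geq h$ and the pointwise identity $h\sigma^2=[\sigma']^2$. The price is an extra admissibility step: $\sigma(u)^2$ need not be bounded a priori, so you need $u\in\lio$ (or, equivalently, the paper's normalization $h(s)=0$ for $s\geq1$, which makes $\sigma$ globally bounded and Lipschitz); your Stampacchia argument supplies this, with the small caveat that $G_k(u)$ is not a priori in $\lio$, so one should test with $T_m(G_k(u))$ as in Remark \ref{continuidad}. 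In exchange you obtain the strictly sharper threshold $\lambda_1(f)\leq1$ in place of $\lambda_1(f)\leq\beta/\alpha$, with the ellipticity constants dropping out entirely; this would correspondingly sharpen Remark \ref{rmka} and Corollary \ref{casooptimo}(ii) (to $\lambda_1(f)>\frac{1}{\Lambda}$).
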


As an easy consequence of Theorem \ref{teoremah}, we will prove (see
Corollary~\ref{casooptimo}) that the model problem \rife{model} does
not have any finite energy solution  provided $\gamma\geq 2$. By
gathering together  this nonexistence result  and
Theorem~\ref{teorema} we conclude immediately that, in the case of
the model problem \rife{model}, we have a sharp range of values
of $\gamma$ for which there exist solutions. In addition, if
$\gamma$ is not in this range, we prove also  what happens if we try to approximate problem \rife{model} with a sequence of problems for which solutions exist.

\begin{theorem}\label{modelt}\sl
Problem $\rife{model}$ has a finite energy solution for every $f\in
\elle{q}$ ($q > \frac N2$) satisfying \rife{cona} if and only if
$\gamma<2$. Moreover, let $\lambda_{1}$ be the first eigenvalue of
the laplacian in the $N$-dimensional unit ball (i.e.  the first
positive zero of the Bessel function $J_m$ with $m=N/2-1$), assume
$f\in \lio$, and either \be\label{ipnnex} \ga >2 \quad \mbox{ or }
\quad \ga=2 \mbox{ and } \|f\|_{\elle{\infty}}<
\frac{\lambda_{1}}{\diam{\left(\Omega\right)}^2}.
\ee
Then the sequence $\{u_n\}$ of solutions of
$$
\begin{cases}
\dys -\Delta u_n +\frac{|\D u_n |^2}{ \left(u_n + \frac{1}{n}\right)^{\gamma} } = f & \mbox{in } \Omega, \\
\hfill u_n = 0 \hfill & \mbox{on } \partial \Omega,
\end{cases}
$$
tends to $0$ in $\huz$, and the sequence $\frac{|\D u_n |^2}{ \left(u_n + \frac{1}{n}\right)^{\gamma} }$ converges to $f$ in the weak$*$ topology of measures.
\end{theorem}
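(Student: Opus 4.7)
The theorem has two assertions: the equivalence between existence of a finite energy solution and $\gamma<2$, and the identification of the limit of the approximating sequence when $\gamma\geq 2$. For the first assertion, the ``if'' direction is Theorem \ref{teorema} applied with $h(s)=s^{-\gamma}$: the relevant hypothesis \rife{cong-1} reduces to $\int_{0}^{1}t^{-\gamma/2}\,dt<+\infty$, equivalent to $\gamma<2$, and $h$ is strictly decreasing. The converse direction is Corollary \ref{casooptimo} (a consequence of Theorem \ref{teoremah} via the same $h$); conditions \rife{contador} and \rife{pereiro} are easily verified, with $\sqrt{h(s)}\,e^{\int_{1}^{s}\sqrt{h(t)}\,dt}$ tending to $1$ when $\gamma=2$ and to $0$ when $\gamma>2$, producing nonexistence for some admissible $f$ when $\gamma\geq 2$.

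For the convergence statement, I would first collect uniform bounds on $u_n$: the maximum principle applied to $-\Delta u_n\leq f\in\lio$ yields $\|u_n\|_{\lio}\leq C$, and testing the approximating equation with $u_n$ gives
\[
\int_{\Omega}|\nabla u_n|^2+\int_{\Omega}\frac{u_n|\nabla u_n|^2}{(u_n+1/n)^\gamma}=\int_{\Omega}f u_n\leq C,
\]
so $u_n$ is bounded in $\huz$ and the singular term in $\luno$. Along a subsequence $u_n\rightharpoonup u^*$ in $\huz$, $u_n\to u^*$ a.e.\ and in $L^p$ for every $p<2^*$. The heart of the proof is to show $u^*\equiv 0$. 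I argue by contradiction: if $u^*\not\equiv 0$, standard compactness tools (Boccardo--Murat-type test functions giving a.e.\ gradient convergence on $\{u^*>0\}$ and Fatou's lemma on the singular nonlinearity) allow passing to the limit in the equation, producing a finite energy distributional supersolution $u^*$ of \rife{model} on $\{u^*>0\}$. Proposition \ref{prop2} forces a local positive lower bound on compact subsets of that set, and the strong maximum principle then yields a genuine finite energy solution of \rife{model} on $\Omega$. For $\gamma>2$ this contradicts Corollary \ref{casooptimo}; for $\gamma=2$ the smallness $\|f\|_{\lio}<\lambda_{1}/\diam(\Omega)^2$ implies $\lambda_1(f)>1$ via the bound $\lambda_{1,\Omega}\geq \lambda_{1}/\diam(\Omega)^2$ (obtained from $\Omega\subset B_{\diam(\Omega)}$ and scaling), so Theorem \ref{teoremah} applies and the contradiction follows.

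Once $u^*\equiv 0$ is known, dominated convergence gives $\int_{\Omega}fu_n\to 0$, and the energy identity forces both terms on its left hand side to zero, i.e.\ $u_n\to 0$ in $\huz$. Writing the equation as $\frac{|\nabla u_n|^2}{(u_n+1/n)^\gamma}=f+\Delta u_n$, the right hand side converges to $f$ in $H^{-1}(\Omega)$ because $\Delta u_n\to 0$ there; combined with the uniform $\luno$ bound and nonnegativity of the left hand side, a density argument in $C_{0}(\Omega)$ promotes this to weak-$*$ convergence as measures. The main obstacle is the identification $u^*\equiv 0$: the singularity of $g$ at zero makes it delicate to pass to the limit in the nonlinearity when $u^*$ is only a supersolution on $\{u^*>0\}$, and this is precisely where Proposition \ref{prop2} together with the strong maximum principle are indispensable to promote a local statement into a genuine global finite energy solution that can be ruled out by nonexistence.
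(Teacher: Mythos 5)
The first assertion and the final bookkeeping in your proposal are fine and agree with the paper: the equivalence follows from Theorem~\ref{teorema} and Corollary~\ref{casooptimo}, and, \emph{once} $u^*\equiv 0$ is known, testing with $u_n$ kills the Dirichlet energy and writing the quadratic term as $f+\Delta u_n$ identifies its weak-$*$ limit. The genuine gap is in the central step, the identification $u^*\equiv 0$. Your contradiction argument rests on Proposition~\ref{prop2}, but that proposition is proved only under hypothesis \rife{cong-1}, i.e.\ $\int_0^1\sqrt{h(t)}\,dt<+\infty$ with $h$ nonincreasing near zero; for $h(s)=s^{-\gamma}$ with $\gamma\geq 2$ this integral diverges, so the proposition (whose proof hinges on the Keller--Osserman condition for the transformed equation) is not available in the regime you are working in --- if it were, the scheme of Theorem~\ref{teorema} would produce a solution for $\gamma\geq 2$ and contradict the first half of the statement. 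Two further problems: even granting $u^*>0$ on an open set, you need a lower bound for the approximations $u_n$, \emph{uniform in $n$}, on compact subsets of that set in order to control $(u_n+1/n)^{-\gamma}$ and pass to the limit in the quadratic term; positivity of the weak limit alone does not give this. And the appeal to the strong maximum principle is unjustified: in the limit $-\Delta u^*=f-\nu$ with $\nu$ a nonnegative measure that may charge $\{u^*=0\}$, so $u^*$ is only a \emph{subsolution} of the singular equation ($-\Delta u^*+\frac{|\nabla u^*|^2}{(u^*)^\gamma}\chi_{\{u^*>0\}}\leq f$), the inequality going the wrong way both for Proposition~\ref{prop2} and for propagating positivity.

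The paper closes this gap with three ingredients absent from your proposal: (i) the comparison principle of \cite{AS}, which shows that $u_{n+1}$ is a subsolution of the $n$-th approximating problem and hence $u_n\geq u_{n+1}\geq\dots\geq u^*$; this monotonicity is precisely what converts positivity of the limit into the uniform-in-$n$ lower bound needed to handle the nonlinearity; (ii) comparison with the radial problem on a ball $B_R\supset\overline\Omega$ with constant datum $\|f\|_{\elle{\infty}}$, which reduces everything to the radial case; (iii) in the radial case, the change of variable $v_n=\psi_n(u_n)$ together with Gidas--Ni--Nirenberg, which shows $u^*$ is radial and nonincreasing, so that if $u^*\not\equiv 0$ then $\{u^*>0\}$ is a ball $B_{r_1}$ on which $u^*$ is bounded below on compacts; repeating the compactness argument of Theorem~\ref{teorema} on $B_{r_1}$ then produces a genuine finite energy solution there, and the contradiction comes from Theorem~\ref{teoremah} (using $\la_1^{B_{r_1}}(T)>\la_1^{B_R}(T)>1$ when $\gamma=2$, which is where the hypothesis $\|f\|_{\elle{\infty}}<\lambda_1/\diam(\Omega)^2$ enters). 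Without a substitute for (i)--(iii), your argument does not close.
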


To conclude this introduction, some remarks are in
order. First, we have to mention that uniqueness of solutions for
$\eqref{model}$ is proved in \cite{AS} for the case $0<\gamma <
1$. Secondly,
let us explicitly state that we have chosen to  present the results and to perform
 the proofs  in the case $N\geq 3$. However,   all the results but Theorem \ref{teorema}
 hold true also in the case $N=2$ (with easier proofs). In addition,
if $N=2$ (which implies $\frac{2N}{N+2}=1$), Theorem \ref{teorema}
is also true provided we replace the assumption $f\in
L^{\frac{2N}{N+2}}(\Omega)$ with $f\in L^{m}(\Omega)$, and assume
$m>1$.

\medskip

The plan of the paper is the following: in Section 2 we will prove
{a} local estimate from below for the solutions, together with
Theorem \ref{teorema}. Section 3 is devoted to provide further
existence results for $L^1$ data (Theorem \ref{th1}) and operators
in non divergence form (Theorem \ref{notdiv}). In Section 4 we prove
the nonexistence result (both theorems \ref{teoremah} and
\ref{modelt}). Finally we present in the Appendix some results
related to the local estimate \rife{nabo}. For instance, we show in
detail how to get the lower bound for solutions of \rife{prob},
{through} a suitable change of variable, proving a local bound from
above {for solutions} of a semilinear equation whose model is
\rife{semil} {(Theorem~\ref{thleoni})}. Such topic is strictly
related to the possibility of {constructing} estimates for solutions
of \rife{semil} that do not depend on the behavior at the boundary:
and indeed in Theorem \ref{existls} we prove the existence of
solutions that blow-up at the boundary (i.e., the so-called \lq\lq
large solutions\rq\rq ) for such equations.

\medskip

\noindent {\bf Notation}.  For any $k>0$ we set $
T_k (s) =\min ( k , \max ( s, -k )) $ and $G_k (s)= s- T_k (s)$.
Moreover, for any $q>1$, $q' = \frac{q}{q-1}$ will be the
H\"{o}lder conjugate exponent of $q$, while for any $1<p<N$, $p^*=
\frac{Np}{N-p}$ is the Sobolev conjugate exponent of $p$. As usual,
$\mathcal{S}$ denotes the best Sobolev constant, i.e.,
$$
\mathcal{S}=\sup \{ \| u\|_{L^{2^*} (\Omega)} : \| u\|_{H_0^{1} (\Omega)}=1 \}.
$$

In Section~3 we will use some ideas related to  Marcinkiewicz spaces; for the
convenience of the reader we recall  here their definition and some properties.
For $s>1$, we denote by $\mathcal M^{s}(\Omega)$  the space of measurable
functions $v:\Omega \to \re $ such that there exists $c>0$, with
\begin{equation}\label{ma}
\mbox{meas}\{x\in \Omega : |v(x)|\geq k\}\leq \frac{c}{k^s }, \quad \forall k>0 .
\end{equation}
The space $\mathcal M^s (\Omega)$ is a Banach space, and on it can be defined the pseudo-norm
$$
\|v\|^s_{\mathcal M^{s}(\Omega)}=\inf \left\{ c>0: \mbox{ \rife{ma} holds}\right\}.
$$
We also recall that, since $\Omega$  is bounded, for every $\eps \in (0,s-1]$,
there exists a positive constant $C$ such that
\begin{equation}\label{imm}
\begin{array}{c}
\|v\|_{\mathcal M^{s}(\Omega)} \leq {\,\|v\|_{L^{s}(\Omega)}},\quad \forall v
\in L^{s}(\Omega) \\
\|{w}\|_{L^{s-\eps}(\Omega)} \leq
C\,\|{w}\|_{\mathcal M^{s}(\Omega)} ,\quad
\forall {w}\in  \mathcal M^{s}(\Omega) .
\end{array}
\end{equation}

Finally, following \cite{Port}, we set $\vp_{\la } (s)= s e^{\la s^2}$, $\la
>0$; in what follows we will use that for every $a$, $b >0$ we have
\begin{equation}\label{vpla}
a\vp_{\la }' (s)- b |\vp_{\la } (s)| \geq \frac a2,
\end{equation}
if $\la >\frac{b^2}{4 a^2}$. We will also denote by $\eps (n)$ any
quantity that tends to $0$ as $n$ diverges.

\vspace{0.5cm} \noindent {\bf Acknowledgment}: The authors wish to
express his thanks to Prof. Serrin for his suggestion of extending
our existence main result to non-divergence operators.
\setcounter{equation}{0}
\section{Finite energy solutions}\label{sec2}

In this section we will prove the existence of finite energy
solutions for problem \eqref{prob}. Let us recall its definition.

\begin{definition}\rm
A {\sl supersolution (resp.\ subsolution)} for problem \rife{prob} is a function $u\in
W^{1,1}_{\rm loc} (\Omega)$ such that
\begin{itemize}
\item[1)] $u > 0$ almost everywhere in $\Omega$,

\item[2)] $ g(x,u)|\D u|^2$ belongs to $\loc{1}$,

\item[3)] for every $0\leq \phi \in C_c^{\infty} (\Omega)$, it holds
$$
\intO M(x,u)\D u \cdot \D \phi + \intO g (x,u)|\D u|^2 \phi \underset{(\leq)}\geq  \intO f  \, \phi
\, .
$$
\end{itemize}
A function $u\in W^{1,1}_0 (\Omega)$ is a  {\sl distributional solution} for  \rife{prob}
if $g(x,u)|\D u|^2$ belongs to $L^1(\Omega)$, and $u$ is both a supersolution and a subsolution for such a problem.\\
\noindent If moreover $u\in\huz$, we say that $u$ is a {\sl finite
energy solution} for problem \rife{prob}. In this case, we have
\begin{equation}\label{enerfin}
\intO M(x,u)\D u \cdot \D \psi + \intO g (x,u)|\D u|^2 \psi = \intO f  \,
\psi, \quad \forall \psi \in \huz\cap\lio.
\end{equation}
\end{definition}

 The
proof of Theorem~\ref{teorema} relies on approximating the datum $f\in L^{\frac{2N}{N+2}} (\Omega)$ by
its truncature $f_n=T_n(f)$ and the nonlinearity $g$ by a  suitable
sequence of Carath\'{e}odory functions
$g_n$ (for $n\in
\mathbb{N}$). Specifically, we define
$$
g_n(x,s)\defin\left\{
\begin{array}{cl}
g(x,s)& \displaystyle \quad s\geq \frac{1}{n},
\\
\displaystyle nh\left( \frac{1}{n}\right) \frac{s}{h(s)}g(x,s)& \displaystyle  \quad 0< s\leq \frac{1}{n},
\\
0& \displaystyle  \quad s\leq 0.
\end{array}
\right.
$$
Since $h$ is nonincreasing in a neighborhood of zero, we observe that there
exists $n_0\in \na$, such that $g_n$ satisfies, for a.e. $x\in \Omega$,
$\forall s> 0$,
\begin{equation}\label{condult}
\begin{cases}
\displaystyle
\hfill \lim_{n \to +\infty}\,g_n(x,s) = g(x,s)\,, \hfill \\[1.5 ex]
g_n(x,s)\leq g(x,s) \,, \forall n\geq n_0\,,\\[1.5 ex]
\hfill g_n(x,s) \geq 0\,. \hfill
\end{cases}
\end{equation}

Since for fixed $n$  both functions $f_n(x)$ ($x\in\Omega $) and
$\frac{|\varsigma|^2}{1+\frac{1}{n}|\varsigma|^2}$  ($\varsigma\in\mathbb R^N$) are bounded,
classical results allow us to
deduce that problem
\begin{equation}    \label{(P_n)}
\begin{cases}
  \displaystyle  -\mbox{div\,}(M(x,u_n)\nabla u_n) +g_n(x,u_n )\frac{|\nabla u_n|^2}{1+\frac{1}{n} |\nabla u_n|^2}
       =f_n & \mbox{in } \Omega, \\
   \hfill u_n={0} \hfill & \mbox{on } \partial \Omega,
\end{cases}\end{equation}
has a solution $u_n$ that belongs to $ H_0^1(\Omega)$ (see \cite{lelio}) and to $\lio$ (see \cite{S}).

We are going to prove now some properties of the sequence $u_n$ that
we will use in the sequel.

\begin{lemma}  \label{lemaprop}\sl
Assume that $0\not\equiv f\in L^{\frac{2N}{N+2}}(\Omega)$ satisfies $f\geq 0$
and that $M(x,s)$ satisfies \rife{coe}. If, for every $n\in\mathbb N$, the
function $u_n\in H_0^1(\Omega)$ is a solution of problem \rife{(P_n)}, then:
\begin{enumerate}
\item[1.] The sequence $ \{ u_n \} $ is bounded in ${H_0^1(\Omega)}$  and
$$
u_n g_n
(x,u_n)\frac{|\nabla u_n|^2}{1+\frac{1}{n} |\nabla u_n|^2} \mbox{ is bounded in } L^1(\Omega ).
$$

\item[2.] The functions $ u_n$  are continuous  in $\Omega$ and $u_n(x)>0$ for every $x\in \Omega$ and $n\in \mathbb N$.
\end{enumerate}

\end{lemma}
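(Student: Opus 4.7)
I would split the proof along the two items of the lemma. For item 1, I would first confirm $u_n\ge 0$ a.e. by testing problem \eqref{(P_n)} with $-u_n^-\in H_0^1\cap L^\infty$: since $g_n(x,s)=0$ for $s\le 0$ by definition, the lower-order term vanishes against this test function, and ellipticity combined with $f_n\ge 0$ forces
\[
\alpha\int_{\{u_n<0\}}|\nabla u_n|^2\le -\int_\Omega f_n\,u_n^-\le 0,
\]
so $u_n^-\equiv 0$. Since $u_n\in H_0^1\cap L^\infty$ is itself an admissible test function, I would then plug it into \eqref{(P_n)}; dropping the non-negative lower-order term (cf.\ \eqref{condult}) and applying H\"older and Sobolev inequalities yields
\[
\alpha\,\|u_n\|_{H_0^1}^2\le\int_\Omega f\,u_n\le \mathcal{S}\,\|f\|_{L^{\frac{2N}{N+2}}(\Omega)}\,\|u_n\|_{H_0^1},
\]
which gives the uniform $H_0^1$ bound. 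Putting this estimate back into the full tested identity immediately produces the $L^1$ bound on $u_n\,g_n(x,u_n)|\nabla u_n|^2/(1+\tfrac{1}{n}|\nabla u_n|^2)$.

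For item 2, the continuity of $u_n$ is a regularity question. The key observation is that for each fixed $n$ the truncated coefficient $g_n(x,\cdot)$ is uniformly bounded on $(0,+\infty)$: on $(0,1/n]$ it is controlled by $h(1/n)$ (combine \eqref{cong-2}, the definition of $g_n$, and the monotonicity of $h$ near zero), while on $[1/n,\|u_n\|_\infty]$ it is bounded by continuity of $g$, the $L^\infty$ bound on $u_n$ itself coming from a standard Stampacchia argument since both the quadratic gradient term and $f_n$ are bounded. Consequently \eqref{(P_n)} is a quasilinear equation with bounded principal part, bounded $L^\infty$ coefficient in front of a natural-growth gradient term, and bounded data, so the Ladyzhenskaya--Ural'tseva (De Giorgi--Nash--Moser) theory yields $u_n\in C^{0,\beta}_{\mathrm{loc}}(\Omega)$.

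It remains to show strict positivity. The case $u_n\equiv 0$ is ruled out at once because it would force $f_n\equiv 0$, contradicting that \eqref{cona} implies $f>0$ a.e.\ in $\Omega$. Since $u_n\ge 0$ is continuous and not identically zero, I would perform the standard exponential change of unknown $v_n=1-e^{-\lambda u_n}$ with $\lambda>0$ large enough (depending on the $L^\infty$ bound of $g_n(x,u_n)$ and on $\alpha,\beta$): a direct calculation absorbs the quadratic gradient term and turns \eqref{(P_n)} into a linear differential inequality of the form
\[
-\text{div}\bigl(M(x,u_n)\nabla v_n\bigr)+C_n(x)\,v_n\ge 0\quad\text{in }\Omega,
\]
with $C_n\in L^\infty(\Omega)$. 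The classical Hopf/V\'azquez strong maximum principle then gives $v_n>0$, hence $u_n>0$, in $\Omega$. This last step---the exponential substitution that converts the singular/quadratic gradient term into an $L^\infty$ potential amenable to the linear strong maximum principle---is the main technical obstacle, while the bounds in item 1 are essentially a one-line test-function computation.
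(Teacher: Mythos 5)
Your proof is correct, and for item~1 and the continuity part of item~2 it follows essentially the paper's own argument: test with $u_n^-$ to get nonnegativity, test with $u_n$ and use H\"older--Sobolev for the $H^1_0$ and $L^1$ bounds (the paper gets the $H^1_0$ bound without first proving $u_n\geq 0$, simply because $s\,g_n(x,s)\geq 0$ for \emph{all} $s$, but this is immaterial), and observe that the right-hand side of \rife{(P_n)} lies in $\lio$ so that the Ladyzhenskaya--Ural'tseva theory applies. Two cosmetic points: the local boundedness of $g_n(x,\cdot)$ on $[1/n,\|u_n\|_{\lio}]$ should be justified via the bound $g(x,s)\leq h(s)$ from \rife{cong-2} rather than ``continuity of $g$'' (which, for a Carath\'eodory function, gives no uniformity in $x$), and to rule out $u_n\equiv 0$ you only need $f\not\equiv 0$ (hence $f_n=T_n(f)\not\equiv 0$), which is what the lemma assumes; \rife{cona} is not among its hypotheses.

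Where you genuinely diverge from the paper is in the strict positivity step. The paper stays with $u_n$ itself: using $g_n(x,s)\leq C_n s$ on $[0,\|u_n\|_{\lio}]$ together with the bound $\frac{|\varsigma|^2}{1+\frac1n|\varsigma|^2}\leq n$, it writes $-\dive(M(x,u_n)\nabla u_n)+nC_nu_n\geq f_n\geq 0$ and invokes the strong maximum principle with a bounded zero-order coefficient. You instead perform the exponential substitution $v_n=1-{\rm e}^{-\lambda u_n}$ with $\lambda$ large, which absorbs the quadratic gradient term into the ellipticity and reduces matters to a supersolution of a linear divergence-form operator. Both are valid; your route is slightly more robust (it does not use the cut-off $1+\frac1n|\varsigma|^2$ at all, and it is the same device the paper itself deploys later in Proposition~\ref{prop2}), while the paper's is marginally shorter because the regularized gradient quotient is already bounded by $n$. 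Either way the conclusion $u_n>0$ in $\Omega$ follows.
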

\begin{proof}[{\sl Proof}] $1$. Taking $  u_n$ as test
function in $\rife{(P_n)}$ and using H\"{o}lder
and Sobolev inequalities we obtain that
$$
  \int_\Omega M(x,u_n)\nabla u_n \cdot\nabla u_n \,
     +
   \int_\Omega g_n(x,u_n)u_n\frac{|\nabla u_n|^2}{1+\frac{1}{n} |\nabla u_n|^2}
   =\int_\Omega f_n u_n
$$
$$
\leq \mathcal{S} \| f\|_{L^{\frac{2N}{N+2}} (\Omega)} \|\D
  u_n\|_{L^2 (\Omega)}.
$$
By the ellipticity condition \rife{coe} and the nonnegativeness of $ g_n(x,s)
s$, we conclude that
the sequences $ u_n$ and $ \dys 
u_n g_n (x,u_n)\frac{|\nabla u_n|^2}{1+\frac{1}{n} |\nabla u_n|^2}
$ are  bounded, respectively,  in $H_0^1(\Omega)$ and in $L^1(\Omega )$.

2. We  take $u_n^-\defin \min ( u_n,0)$ as test function in $\rife{(P_n)}$, so that, by \rife{coe},
$$
\alpha   \int_\Omega |\nabla
u_n^-|^2 \,
      +
\int_\Omega g_n(x,u_n) \frac{|\nabla u_n|^2}{1+\frac 1n |\nabla u_n|^2} u_n^-
\, \leq \int_\Omega f_n\,u_n^- \,.
$$
Using that $f_n\geq 0$ and $g_n(x,s)$ is zero for every $s\leq 0$, we obtain
$$
\alpha \int_\Omega |\nabla u_n^-|^2 \,
             \leq
\int_\Omega f_n\,u_n^- \, \leq 0.
$$
Thus {$u_n^-\equiv 0$ and so} $u_n\geq0$. \salta{Moreover, choosing $ G_k (u_n)$ as
test function in \rife{(P_n)}, we deduce from \rife{coe} and since
{$g_n(x,u_n)\geq 0$}
\begin{equation}    \label{Stp}
\alpha\intO |\D G_k (u_n)|^2 \, \leq \intO f_n \,G_k (u_n) .
\end{equation}
Thanks to a result by Stampacchia, we deduce that  any
$ u_n $ belongs to $ L^\infty (\Omega )$ and }
Moreover,
\begin{equation*}
      \begin{array}{c}
     \displaystyle  -\mbox{div\,}(M(x,u_n)\nabla u_n)=f_n-g_n(x,u_n)\frac{|\nabla u_n|^2}{1+\frac{1}{n} |\nabla
     u_n|^2}\in L^{\infty}(\Omega).
                \end{array}
\end{equation*}
Hence  $u_n$ belongs to the space of the H\"older continuous
functions in $\Omega$  (see for instance \cite{lad}, Theorem~1.1 in
Chapter~4).

We are now going to prove that $u_n>0$ in $\Omega$. Let $C_n>0$ be
such that $g_n(x,s) \leq C_n s$, for $s\in
\left[0,\|u_n\|_{L^\infty(\Omega)} \right]$. Thus the nonnegative
function $u_n$ satisfies in the sense of distributions in $\Omega$
\begin{eqnarray*}
-\mbox{div\,}(M(x,u_n)\nabla u_n) + n C_n u_n &\geq&
-\mbox{div\,}(M(x,u_n)\nabla u_n) + \frac{  g_n(x,u_n) |\nabla u_n |^2}{1+\frac
1n |\nabla u_n|^2} \\
&=& f_n  \, .
\end{eqnarray*}
Observing that $f_n$ is nonnegative and not identically zero (since
{$f\not\equiv 0$}), by the strong maximum principle (see \cite{GT}
for instance) we deduce that  $u_n>0$ in $ \Omega$.
\end{proof}

In the next proposition we will prove  that the sequence  $\{u_n\}$  is uniformly bounded from below, away from zero, in every compact set in $\Omega$.
This result will be crucial in order to prove the existence of a solution for \rife{prob}.

\begin{proposition} \label{prop2}\sl
Suppose that $f \in L_{\mbox{\tiny loc}}^{\infty}(\Omega )$ satisfies
\rife{cona}, and that  $h$ is such that
\rife{cong-1} holds. Let $\omega$ be a compactly contained open subset of
$\Omega$. Then there exists a constant $c_{\omega}>0$ such that every
supersolution  $0<z\in H_{\mbox{{\tiny {\rm loc}}}}^1(\Omega )\cap C(\Omega )$
of the equation
\begin{equation}
                                           \label{eqh}
- \mbox{{\rm div\,}} (M(x,z) \nabla z) + h(z) |\nabla z|^2 = f
\quad \mbox{in } \Omega,
\end{equation}
satisfies
$$
z \geq c_{\omega} \quad \mbox{in }\omega.
$$
\end{proposition}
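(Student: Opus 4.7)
The goal is to turn the desired lower bound for the supersolution $z$ of \eqref{eqh} into a local upper bound for a subsolution $v$ of a purely semilinear problem, to which the Keller--Osserman-type local estimate (Theorem \ref{thleoni} in the Appendix) applies. First I would reduce to the case where $\omega$ is a ball: any $\omega \subset\subset \Omega$ can be covered by finitely many balls $B_R(x_0)$ with $\overline{B_{2R}(x_0)} \subset \Omega$, so it suffices to bound $z$ from below on each such $B_R(x_0)$. On $\overline{B_{2R}(x_0)}$ we have $m_0 := m_{B_{2R}(x_0)}(f) > 0$ by \eqref{cona}, which is the positive lower bound on $f$ that will feed into the Keller--Osserman estimate.

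The core step is a change of unknown $v = \Psi(z)$, where $\Psi$ is a smooth strictly decreasing function on $(0,+\infty)$ with $\Psi(s) \to +\infty$ as $s \to 0^+$, built from an exponential of an antiderivative of $\sqrt{h}$. The idea is that, plugging $v = \Psi(z)$ into the supersolution inequality and dividing by $|\Psi'(z)|$, the singular quadratic gradient term $h(z)|\nabla z|^2$ gets absorbed into the principal part via a Cole--Hopf-type reformulation, producing a semilinear subsolution inequality
$$-\text{div}(\widetilde{M}(x,v)\,\nabla v) + f(x)\,b(v) \leq 0 \quad \text{in } \Omega,$$
where $\widetilde{M}$ is a uniformly elliptic matrix (with ellipticity constants depending on $\alpha$, $\beta$ and the profile of $h$) and $b:\mathbb{R} \to [0,+\infty)$ is nonnegative with $b(t)/t$ increasing for $t$ large. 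The integrability assumption \eqref{cong-1}, $\int_0^1 \sqrt{h(t)}\,dt < +\infty$, is precisely what translates under this transformation into the Keller--Osserman condition $\int^{+\infty} dt/\sqrt{2\int_0^t b(\tau)\,d\tau} < +\infty$ for $b$; the monotonicity of $h$ near zero in \eqref{cong-1} provides the sign and regularity information needed to verify the uniform ellipticity of $\widetilde{M}$ and the monotonicity of $b(t)/t$.

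With this semilinear framework in place, I would apply Theorem \ref{thleoni} on $B_{2R}(x_0)$ to the subsolution $v$. Because $f \geq m_0 > 0$ there, the Keller--Osserman--Brezis machinery yields an upper bound $v \leq C_R$ on $B_R(x_0)$ depending only on $R$, $m_0$, the ellipticity constants, $\|f\|_{L^\infty(B_{2R}(x_0))}$ and on $h$, but---crucially---\emph{not} on the values of $v$ on $\partial B_{2R}(x_0)$. Inverting the change of variable then gives $z = \Psi^{-1}(v) \geq \Psi^{-1}(C_R) =: c_R > 0$ on $B_R(x_0)$, and the covering argument yields the conclusion with $c_\omega := \min c_R$ over the finitely many balls.

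The main obstacle is step two: constructing $\Psi$ so that it simultaneously (i) blows up at $s = 0^+$, so that an upper bound on $v$ implies a positive lower bound on $z$; (ii) absorbs the singular quadratic gradient term into a \emph{uniformly elliptic} principal part $\widetilde{M}$ despite $h$ being possibly non-integrable; and (iii) produces a nonlinearity $b$ satisfying the Keller--Osserman condition. These three requirements can be met simultaneously exactly when $\int_0^1 \sqrt{h(t)}\,dt < +\infty$, which is why \eqref{cong-1} is the sharp hypothesis; the detailed construction and verification are carried out together with the proof of Theorem \ref{thleoni} in the Appendix.
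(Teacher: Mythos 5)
Your overall strategy is the paper's: a Cole--Hopf type change of unknown $v=\psi(z)$ turning the supersolution of \rife{eqh} into a subsolution of a semilinear equation $-\dive(\widetilde M(x,v)\nabla v)+f(x)b(v)\le 0$, followed by the local Keller--Osserman estimate of Theorem~\ref{thleoni} and inversion of $\psi$. (The covering by balls is unnecessary, since Theorem~\ref{thleoni} is already stated for arbitrary $\omega\subset\subset\Omega$.) The problem is that everything you label ``the main obstacle'' --- the construction of $\psi$, the verification that the transformed absorption term $b$ satisfies the Keller--Osserman condition \rife{k-o}, and the monotonicity of $b(s)/s$ for large $s$ --- is precisely the content of the proof of Proposition~\ref{prop2}, and it is \emph{not} carried out in the Appendix: Theorem~\ref{thleoni} is a general statement that takes a $b$ satisfying \rife{k-o} as a hypothesis. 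Deferring these verifications to the Appendix leaves the proof without its substance.

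Moreover, the one concrete detail you give about the construction is incorrect. To absorb $h(z)|\nabla z|^2$ into the principal part one needs $-\Psi''/\Psi'\ge h/\alpha$, so the exponent must be an antiderivative of $h/\alpha$, not of $\sqrt{h}$: with $\Psi'=-{\rm e}^{-\int\sqrt{h}}$ one would need $\sqrt{h}\ge h/\alpha$, which fails near $0$ whenever $h$ blows up. The paper takes $\psi(s)=\int_s^1{\rm e}^{-H(t)/\alpha}\,dt$ with $H(t)=\int_1^t\big(h(\tau)+\alpha/\tau\big)\,d\tau$; the added term $\alpha/s$ (equivalently the factor $t^{-1}$ in the integrand) is what guarantees $\psi(s)\to+\infty$ as $s\to0^+$ even when $h$ happens to be integrable --- your requirement (i), which you state but do not resolve. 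The square root of $h$ enters only at the very end, when one changes variables in $\int^{+\infty}dt/\sqrt{2\int_0^t b}$ and uses \rife{cong-1} together with the monotonicity of $h$ near zero (via the lower bound \rife{canicula}) to verify \rife{k-o}. Finally, a small inaccuracy: $\widetilde M(x,s)=M(x,\psi^{-1}(s))$ inherits the ellipticity constants $\alpha,\beta$ of $M$ unchanged, so there is no ellipticity ``depending on the profile of $h$'' to check.
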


\begin{remark} \label{recta}
{\rm The above proposition will be crucial in the proofs of both
Theorem~\ref{teorema} and \ref{th1}. In fact, we will use the following
consequences:
\begin{enumerate}
\item[(i)]  Let  $u_n$ be a solution of \rife{(P_n)} with $n\geq n_0$ ($n_0$ given by
\eqref{condult}).
By Lemma~\ref{lemaprop}, $u_n>0$ in $\Omega$ and it is continuous.
In particular $h(u_n)|\nabla u_n|^2\in
\loc{1}$. Thus, from 
the inequalities $g_n(x,s)\leq
g(x,s)\leq h(s)$ for every $s>0$ and $f_n\geq f_1$ 
we obtain that
$u_n$ is a supersolution for
\begin{equation*}
- \mbox{{\rm div\,}} (M(x,z) \nabla z) + h(z) |\nabla z|^2 = f_1 \quad \mbox{in } \Omega .
\end{equation*}
Therefore, by the above proposition (with $f=f_1$ and ${z=u_n}\in H_0^1(\Omega
) \cap C(\overline{\Omega})$ (Lemma~\ref{lemaprop}-2.)) for any
$\omega\subset\subset \Omega$ we get the existence of a positive
constant $c_{\omega}$ such that $u_n \geq c_{\omega}$ in $\omega$. Taking $k>0$
and $m_0>\max \{ n_0, \frac{1}{c_\omega}\}$, we deduce, by the
definition of $g_n$, that for all $n\geq m_0$
$$
g_n(x, u_n(x)) = g(x,u_n(x)) \leq c_k(\omega)\defin\max_{s\in [c_\omega, k]} h(s)
\,,
$$
for every $x\in \omega$ such that $u_n(x)\leq k$.

\vspace{0.5cm}
\item[(ii)] If $0<u_n\in H_{0}^1(\Omega )\cap C(\Omega )$ is a finite energy solution of
$$
- \mbox{{\rm div\,}} (M(x,{u_n}) \nabla {u_n}) + g(x,{u_n}) |\nabla {u_n}|^2 =
f_n \quad \mbox{in } \Omega ,
$$
then, using again that $g(x,s)\leq h(s)$, $f_n\geq f_1$
and $h(u_n)|\nabla u_n|^2\in \loc{1}$, we derive that $u_n$ is also
a supersolution of
\begin{equation*}
- \mbox{{\rm div\,}} (M(x,z) \nabla z) + h(z) |\nabla z|^2 = f_1 \quad \mbox{in } \Omega .
\end{equation*}
Consequently, if $\omega\subset\subset \Omega$ and $c_\omega$ has been defined
above (with $f=f_1$), then  $u_n \geq c_{\omega}$ in  $\omega$. Therefore,
$$
g(x,u_n(x) ) \leq c_k(\omega)\defin\max_{s\in [c_\omega, k]} h(s) \,,
$$
for every $x\in \omega$ such that $u_n(x)\leq k$.
\end{enumerate}
}
\end{remark}

\begin{proof}[{\sl Proof of Proposition \ref{prop2}}]   Let $z>0$ be a
supersolution  of \rife{eqh}. We are going to consider a suitable change of
variable. In order to make it, since in general the function $h$ may be
integrable in $(0,1)$, we set $
\dys \widetilde{h}(s)= h(s) +\frac{\alpha}{s}
$, and define, for $s > 0$, the nondecreasing function
\be\label{acca}
H(s)=\int_1^s \widetilde{h}(t)dt=  \int_1^s {h}(t)dt + \log \,
s^{\alpha},
\ee
and the nonincreasing function
\be\label{change}
\psi(s)=\int_s^1 {\rm e}^{-\frac{H(t)}{\alpha}}dt
=\int_s^1 t^{- 1}{\rm e}^{-\frac{\int_1^t h(\tau )d\tau}{\alpha}} dt.
\ee
Observing that
$$\lim_{s \to 0^{+}}\,\psi(s)=+\infty,
\qquad \lim_{s \to +\infty}\,\psi(s){=\psi_\infty}\in[-\infty,0),
$$
we can define
\be\label{change2}
v\defin \psi (z).
\ee

Since $z$ is continuous and strictly positive
in $\Omega$, we get that $z$ is bounded away from zero (with the
bound depending on $z$) in every open set $\omega$ compactly
contained in $\Omega$. Consequently, by the chain rule, we have
\begin{equation}\label{chain}
\nabla v=  -{\rm e}^{-\frac{H(z)}{\alpha}} \nabla z \in L^2(\omega),\quad
\forall \omega\subset\subset\Omega,
\end{equation}
and thus $v\in H^1 (\omega)$ for every $\omega \subset\subset \Omega$, i.e.,
$v\in H^1_{\mbox{\tiny loc}} (\Omega)$.

Let  $0\leq \phi\in
C_c^{\infty}(\Omega)$, and take (as in \cite{bocc}) ${\rm e}^{-\frac{H(z)}{\alpha}}\phi$ as test
function in \rife{eqh} to deduce from the inequality $ h(s)\leq
\widetilde{h}(s)$ that
\begin{eqnarray*}
- \int_{\Omega}M(x,z)\nabla z \cdot\nabla z \, \frac{\widetilde{h}(z)}{\alpha}
{\rm e}^{-\frac{H(z)}{\alpha}}\phi \, +\int_{\Omega}M(x,z)\nabla z \cdot \nabla
\phi\, {\rm e}^{-\frac{H(z)}{\alpha}} \,
\\
+ \int_{\Omega} \widetilde{h}(z)|\nabla z|^2{\rm
e}^{-\frac{H(z)}{\alpha}}\phi\,
\geq \int_{\Omega}f
{\rm e}^{-\frac{H(z)}{\alpha}} \phi\,.
\end{eqnarray*}
Using \rife{coe} together with \rife{chain} we get,
\begin{eqnarray*}
-\int_{\Omega}M(x,z)\nabla \psi(z) \cdot \nabla \phi \,
            \geq
\int_{\Omega}f {\rm e}^{-\frac{H(z)}{\alpha}} \phi\,
 { \geq
\int_{\Omega}  \left( {\rm e}^{-\frac{H(z)}{\alpha}}  -1\right) f \phi\,
 }.
\end{eqnarray*}

\noindent
If we define $\widetilde{M}(x,s)=
M(x,\psi^{-1} (s))$ and

\be\label{b(s)}
{b(s)= {\rm e}^{-\frac{H(\psi^{-1}(s))}{\alpha}} -1}\quad \mbox{ for every
 } s\in(\psi_\infty,+\infty),
 \ee
 then $v$ is subsolution of
\begin{equation*}   
-\mbox{div\,}(\widetilde{M}(x,v)\nabla v)+ f (x) \, {b(v)}=0 \quad \mbox{in}\,\,
\Omega.
\end{equation*}

Observe that $\frac{b(s)}{s}$ is nondecreasing for large $s>0$; indeed, this is
equivalent to prove that $ \Upsilon (t)= \frac{{\rm e}^{-\frac{H(t)}{\alpha}}-1}{\psi
(t)}$ is nonincreasing in a neighborhood of $t=0$. To show this, let $w_0\in
(0,1)$ be such that $\widetilde{h}(t)$ is nonincreasing in $(0,w_0]$, and, note that
$$
-{\rm e}^{\frac{H(t)}{\alpha}}\psi^2 (t)\Upsilon '(t)= \frac{\widetilde{h}(t)}{\alpha}\psi (t)-({\rm e}^{-\frac{H(t)}{\alpha}}-1)=\int_t^1\frac{ [\widetilde{h}(t)-\widetilde{h}(s)]}{\alpha}{\rm e}^{-\frac{H(s)}{\alpha}} ds
$$
$$
\geq \int_{w_0}^1 \frac{[\widetilde{h}(t)-\widetilde{h}(s)]}{\alpha} {\rm e}^{-\frac{H(s)}{\alpha}} ds = \widetilde{h}(t)M_1 -M_2
$$
where
$$
M_1= \frac{1}{\alpha} \int_{w_0}^1 {\rm e}^{-\frac{H(s)}{\alpha}} ds \quad \mbox{and}
\quad
M_2=\frac{1}{\alpha} \int_{w_0}^1 \widetilde{h} (s) {\rm e}^{-\frac{H(s)}{\alpha}}
ds. $$
Thus, if $t$ belongs to the interval $(0,\widetilde{h}^{-1}(\min\{ w_0 ,
M_2/M_1\}) )$, then the right hand side of the above inequality is positive,
and consequently $\Upsilon (t)$ is nonincreasing in this interval.

We also claim now that  since $\int_0^1\sqrt{h(s)}ds<+\infty$ and $h$ is nonincreasing in a
neighborhood of zero, then  the function $b(s)$ satisfies the well-known
Keller-Osserman  condition (see \cite{K} and \cite{o} for instance), i.e.,
there exists $t_0>0$ such that
\begin{equation}                     \label{k-o}
\int_{t_0}^{+\infty}\frac{dt}{ \sqrt{2 \int_0^t b(s) ds }} <
+\infty .
\end{equation}
We postpone the proof of the claim for the moment, and we show how to
conclude the proof by using the claim. Indeed,
by applying \cite[Theorem 7]{LEO} (see also Theorem~\ref{thleoni}
in the Appendix where, for the convenience of the reader, we have
also included a proof of the precise result that we need here)
we derive that for every $\omega \subset\subset \Omega$, there exists
$C_{\omega}>0$ such that
$$
v\leq C_{\omega} \quad \mbox{in } \omega .
$$
Therefore, undoing the change
$$
z\geq\psi^{-1}(C_{\omega})=  c_{\omega}>0   \quad  \mbox{in } \omega\,,
$$
as desired.

Consequently, to conclude the proof it suffices to show \rife{k-o} or, equivalently, that
$$
\int_{t_0}^{+\infty}\frac{dt}{ \sqrt{2 \int_0^t {\rm e}^{-\frac{H(\psi^{-1}(s))}{\alpha}} ds }} <
+\infty .
$$
\noindent
Using  the change  $\tau =\psi^{-1}(s)$,
we obtain

\begin{eqnarray*}
\displaystyle \int_{t_0}^{+\infty}\frac{dt}{\sqrt{2\int_0^t {\rm e}^{-\frac{H(\psi^{-1}(s))}{\alpha}}
ds }} &=&
\int_{t_0}^{+\infty}\frac{dt}{\sqrt{2\int_{\psi^{-1}(t)}^{
 \psi^{-1}(0)}
{\rm e}^{-2\frac{H(\tau )}{\alpha}} d\tau}}.
\end{eqnarray*}
Now we apply  the change $w=
\psi^{-1}(t)$ to  deduce that
\begin{eqnarray*}
\int_{t_0}^{+\infty}\frac{dt}{\sqrt{2\int_0^t {\rm e}^{-\frac{H(\psi^{-1}(s))}{\alpha}} ds }}
\leq
\int_0^{w_0}\frac{dw}{\sqrt{2\int_w^{w_0}{\rm e}^{\frac{2}{\alpha}[H(w)-H(\tau )]}d\tau }},
\end{eqnarray*}
with $0<w_0 = \psi^{-1}(t_0) <1= \psi^{-1} (0)$ since
$\psi $ is nonincreasing, and we choose $t_0>>1$ such that $h$ is
nonincreasing in $(0,w_0 ]$.

Since $h$ satisfies \rife{cong-1}, also $\widetilde{h}$ satisfies it, so that we  conclude the proof if we show that there exists a positive contant $c_0$ such that
\begin{equation} \label{canicula}
\widetilde{h}(w)\int_{w}^{w_0}{\rm e}^{\frac{2}{\alpha}[H(w)-H(\tau )]}\ d\tau \geq c_0>0 , \quad
\forall w\in (0,w_0).
\end{equation}
Indeed,  the only difficulty is near zero. To overcome it, we use that $h$ (hence $\widetilde{h}$) is nonincreasing in $(0,w_0 ]$, to obtain
\begin{eqnarray*}
\dys \widetilde{h}(w) \int_{w}^{w_0}{\rm e}^{\frac{2}{\alpha}[H(w)-H(\tau )]}\ d\tau
&\geq& \dys  \int_{w}^{w_0}\widetilde{h}(\tau ){\rm e}^{\frac{2}{\alpha}[H(w)-H(\tau )]}\ d\tau
\\
\dys &=&-\frac{\alpha {\rm e}^{\frac{2}{\alpha}H(w)}}{2}\int_{w}^{w_0}-\frac{2}{\alpha}\widetilde{h}(\tau ){\rm e}^{-\frac{2}{\alpha}H(\tau )}\
d\tau
\\
&=&-\frac{\alpha {\rm e}^{\frac{2}{\alpha}H(w)}}{2}\left[ {\rm e}^{ -\frac{2}{\alpha} H(\tau ) }\right]^{w_0}_{w} \dys=
-\frac{\alpha}{2}\frac{{\rm e}^{\frac{2}{\alpha} H(w)}}{{\rm e}^{\frac{2}{\alpha} H(w_0)}} +\frac{\alpha}{2}.
\end{eqnarray*}
Using the above inequality and the fact that
${\rm e}^{\frac{2}{\alpha}H(w)}$ is close to zero for $w$ small enough,  we
can choose $\overline{w}\in(0,w_0)$ such that
$$
\widetilde{h}(w)\int_{w}^{w_0}{\rm e}^{\frac{2}{\alpha}[H(w)-H(\tau )]}\ d\tau \geq
\frac{\alpha}{4},
$$
for $0<w< \overline{w}$. Thus the existence of $c_0$ such that \rife{canicula} holds is deduced.
\end{proof}

\begin{remark} \label{nol1}\rm
If $h$ is such that
$$
\lim_{s \to 0^{+}}\,\int_{s}^{1}\,h(t)\,dt = +\infty,
$$
there is no need to define the above function $\widetilde{h}$. Indeed, in this case, the proof of the above theorem works by using directly $h$ instead of $\widetilde{h}$.
\end{remark}

\begin{proof}[{\sl Proof of Theorem~\ref{teorema}}]
We are going to  prove that, up to a subsequence, the sequence $\{u_n\}$ of
finite energy solutions of   \rife{(P_n)}  converges to a  finite
energy solution of \rife{prob}.

By Case 1. of Lemma~\ref{lemaprop}, we obtain the existence of
constants $C_1,C_2>0$ such that
\begin{equation}\label{sarc}
\|u_n \|_{\huz} \leq C_1 \mbox{ and } \intO u_n g_n (x,u_n)\frac{|\D
u_n|^2}{1+\frac{1}{n} |\D u_n|^2}\leq C_2.
\end{equation}
Thus, up to a subsequence, we can assume that $u_n$ converges to
some $u\in H_0^1(\Omega )$
weakly
in $H_0^1(\Omega )$  and, by Rellich's Theorem, strongly in $L^2 (\Omega)$ and a.e. in $\Omega$.

Choosing
$
\frac{1}{\varepsilon}T_{\varepsilon} (u_n)
$
as test function   in \rife{(P_n)} and taking into account that  $f_n\leq f$ in $\Omega$, we deduce that
\begin{eqnarray*}
 \int_{\Omega} \frac{T_{\varepsilon} (u_n)}{\varepsilon}   g_n (x,u_n)\frac{|\D u_n|^2}{1+\frac{1}{n} |\D u_n|^2} \,
\leq                  \intO f_n \, \leq \intO f \,.
\end{eqnarray*}
If we take the limit as $\eps$ tends to zero, and we use that, by
Lemma~\ref{lemaprop}, $u_n>0$ in $\Omega$, we get
\begin{equation} \label{l1}
\int_{\Omega}   g_n (x,u_n) \frac{|\D u_n|^2}{1+\frac{1}{n} |\D u_n|^2}
\, =
\int_{\{u_n>0\}} \!\!\!  g_n (x,u_n) \frac{|\D u_n|^2}{1+\frac{1}{n} |\D
u_n|^2}  \, \leq \intO f \, .
\end{equation}
The proof will be concluded by proving the following   steps:

\noindent {\bf Step 1.} For every $ k>0$, $T_k (u_n)
\to T_k (u)$ strongly in $H^1_{\rm loc} (\Omega)$.

\noindent {\bf Step 2.} $u_n$ is
strongly convergent in $H^1_{\mbox{\tiny loc}} (\Omega)$.

\noindent {\bf Step 3.} We pass to the limit in \rife{(P_n)}.\\[2.0 ex]
\noindent
{\bf Step 1.} Here we want to prove  that
\begin{equation}\label{tronca}
\dys \lim_{n\to +\infty} \intO  |\nabla (T_k (u_n)-T_k (u)) |^2 \phi \, =0,\quad
\forall \phi \in C^{\infty}_c (\Omega) \mbox{ with }\,\phi\geq 0.
\end{equation}

Reasoning as in \cite{bgm}, we consider the function $\vp_{\la} (s)$  defined
in \rife{vpla} and we choose  $\vp_{\la}
(T_k (u_n)-T_k (u)) \phi $ as test function in \rife{(P_n)}: we have
$$
\intO  M(x,u_n) \D u_n \cdot \D (T_k (u_n)-T_k (u))  \vp_{\la} '(T_k (u_n)-T_k
(u)) \phi \, $$
$$ +\intO M(x,u_n) \D u_n \cdot \D \phi\,  \vp_{\la} (T_k
(u_n)-T_k (u)) \,
$$ $$
+\intO g_n(x,u_n)\frac{|\nabla u_n|^2}{1+\frac{1}{n}|\nabla u_n|^2} \vp_{\la}
(T_k (u_n)-T_k (u)) \phi \,  = \intO f_n\,\vp_{\la} (T_k (u_n)-T_k (u)) \phi\,
.
$$
Since  $T_k (u_n) \to T_k (u)$ weakly in $H^1_0
(\Omega)$ and  strongly in $\elle2$,   we note that
$$
\intO f_n\,\vp_{\la} (T_k (u_n)-T_k (u)) \phi\,
-\intO M(x,u_n)\D u_n \cdot \D \phi \, \vp_{\la} (T_k (u_n)-T_k (u))\,  = \eps (n).
$$

Moreover, choosing $ \omega_\phi\subset\subset \Omega$  with $\mbox{supp\,}
\phi \subset\omega_\phi$, we deduce, by Case (i) of Remark~\ref{recta} and by
the  nonnegativeness of both $g_n$ and $\vp_{\la}(k - T_k(u))$, that
$$
\intO g_n(x,u_n)\frac{|\nabla u_n|^2}{1+\frac{1}{n}|\nabla u_n|^2} \vp_{\la}
(T_k (u_n)-T_k (u)) \phi \,
$$
$$
\geq \int_{\{u_n\leq k\}} g_n(x,u_n)\frac{|\nabla u_n|^2}{1+\frac{1}{n}|\nabla
u_n|^2} \vp_{\la} (T_k (u_n)-T_k (u)) \phi \,
$$
$$
\geq - c_k(\omega_\phi)  \intO |\nabla T_k (u_n)|^2 |\vp_{\la} (T_k (u_n)-T_k (u)) |\phi \, .
$$
Thus
\begin{equation}                     \label{ccua}
\begin{array}{c}
\intO M(x,u_n) \D u_n \cdot \D (T_k (u_n)-T_k (u))  \vp_{\la} '(T_k
(u_n)-T_k (u)) \phi \, \\
-  c_k(\omega_\phi)  \intO  |\nabla T_k (u_n)|^2  |\vp_{\la} (T_k
(u_n)-T_k (u)) |\phi \,   \leq\eps (n) .
\end{array}
\end{equation}
Note that
$$
\intO M(x,u_n) \D  u_n\cdot \D (T_k (u_n)-T_k (u))  \vp_{\la} '(T_k (u_n)-T_k
(u)) \phi \chi_{\{u_n\geq k\}}  \,
$$
$$
= - \intO M(x,u_n) \D  u_n\cdot \D T_k (u)  \vp_{\la} '(k-T_k (u)) \phi
\chi_{\{u_n\geq k\}}  \, =\eps (n),
$$
so that, adding
$$
-\intO  M(x,u_n) \D T_k (u) \cdot \D (T_k (u_n)-T_k (u))  \vp_{\la} '(T_k
(u_n)-T_k (u)) \phi  \,  =\eps (n)
$$
in both sides of \rife{ccua} and since
$$
\intO  |\nabla T_k (u_n)|^2  |\vp_{\la} (T_k (u_n)-T_k (u)) |\phi  \,
$$
$$
\leq  2 \intO  |\nabla (T_k (u_n)-T_k (u))|^2  |\vp_{\la} (T_k (u_n)-T_k (u))
|\phi  \,
$$
$$
+  2\intO  |\nabla T_k (u)|^2  |\vp_{\la} (T_k (u_n)-T_k (u)) |\phi  \,
$$
$$
=  2 \intO  |\nabla (T_k (u_n)-T_k (u)) |^2  |\vp_{\la} (T_k (u_n)-T_k (u))
|\phi  \,  +\eps (n),
$$
we find, using also \rife{coe} (for the sake of brevity, we omit writing the argument $T_{k}(u_{n}) - T_{k}(u)$ for $\vp_{\la}$ and $\vp'_{\la}$),
$$
\intO   |\nabla (T_k (u_n)-T_k (u)) |^2 \Big[\alpha \vp_{\la}'
 -  2 c_k(\omega_\phi)   |\vp_{\la} |\Big] \phi  \,  \leq \eps (n).
$$
Choosing $\lambda  $ such that   \rife{vpla} holds  with $a=\alpha$ and $b=2 c_k(\omega_\phi)$,
we obtain \rife{tronca}.

\vspace{0.5cm}
\noindent {\bf Step 2.} We prove now  that the sequence $u_n$ is
strongly convergent in $H^1_{\mbox{\tiny loc}} (\Omega)$.

Let us choose $G_k (u_n)$ as test function in \rife{(P_n)} and
drop the  positive integral involving the lower order term.
By using \rife{coe}, and H\"older  and Sobolev  inequalities, we
have
$$
\intO |\D G_k (u_n)|^2 \,   \leq \frac{\mathcal{S} ^2}{\alpha^{ 2}}
\left(\int_{\{u_n\geq k\}} \!\!\! f^{\frac{2N}{N+2}}\
\right)^{1+\frac{2}{N}}\,,
$$
and the right hand side of the previous inequality is arbitrarily
small if $k$ is large enough. This and the convergence proved in
Step~1 of $T_k(u_n)$ in $H_0^1(\Omega )$  implies that $|\D u_n|^2$
is equiintegrable in every $\omega \subset \subset \Omega$.

    Moreover, since
$$
-{\rm div}(M(x,u_{n})\nabla u_{n}) = f_{n} - g_{n}(x,u_{n})\frac{|\nabla u_{n}|^{2}}{1 + \frac1n |\nabla u_{n}|^{2}}\,,
$$
and the right hand side is bounded in $\elle1$ by the assumptions on
$f$ and by \rife{l1}, we can apply Lemma~1 of \cite{BG} (see also
\cite{BM}) to deduce that, up to (not relabeled) subsequences,  $\D
u_n$ converges to $\D u$ a.e. in $\Omega$. Hence,  by Vitali theorem
$$
u_n \to u \qquad \mbox{in } H^1_{\rm loc} (\Omega).
$$

\noindent{\bf Step 3.}
Let us observe that, by  applying  Fatou
lemma in \rife{sarc} and \rife{l1}, we deduce 
that
$$
\intO ug (x,u)|\D u|^2\,  \leq C_2 \quad \mbox{and}\quad \int_{\Omega }   g
(x,u) |\D u |^2 \, \leq \intO f\,  \,,
$$
respectively. Therefore, to conclude the proof we only have to
prove that $u$ is a distributional solution of the problem (\ref{prob}). We begin
by passing to the limit on $n$ in the equation satisfied by $u_n$, i.e., in
$$
\int_{\Omega}M(x,u_n)\nabla u_n\cdot\nabla \phi \, + \int_{\Omega}
g_n(x,u_n)\frac{|\nabla u_n|^2}{1+\frac{1}{n}|\nabla u_n|^2}\phi \,
=
\int_{\Omega}f_n \phi \,   , \  \forall\phi\in C_c^\infty(\Omega) .
$$
First of all, the weak convergence of
$u_n$ to $u$ and the weak-$\ast$ convergence of $M(x,u_n)$  to $M(x,u)$ in $L^{\infty}(\Omega)$ implies that
\begin{equation}   \label{nete}
\lim_{n\rightarrow+\infty}\int_{\Omega}M(x,u_n)\nabla u_n\nabla \phi\
  =
   \int_{\Omega}M(x,u)\nabla u\nabla \phi\ , \quad  \forall \phi\in C_c^\infty(\Omega) .
\end{equation}
On the other hand,
if we fix  $\omega \subset\subset \Omega $, then, by Remark~\ref{recta},
$$
g_n(x,u_n(x)) \leq c_k (\omega),\ \  \forall n>>1, \mbox{ and  }
\forall x\in \omega \mbox{ satisfying } u_n(x)\leq k .
$$
Consequently, if $E\subset \subset \omega$ we have
$$
\dys
  \int_E |g_n(x,u_n(x))| \frac{|\nabla u_n (x)|^2}{1+\frac 1n
|\nabla u_n (x)|^2} \,
$$
$$\leq \dys \int_{E\cap \{u_n \leq k \}
}g_n(x,u_n) \frac{|\nabla u_n |^2}{1+\frac 1n |\nabla u_n |^2} \,  +
\int_{E\cap \{u_n \geq k \} } g_n(x,u_n) \frac{|\nabla u_n |^2}{1+\frac 1n |\nabla u_n|^2} \,
$$
\begin{equation}
                     \label{espi}
\leq \dys c_k (\omega) \int_{E\cap \{u_n \leq k \} }  |\nabla T_k (u_n )|^2 \,
+
\int_{ \{u_n \geq k \} } g_n(x,u_n)  \frac{|\nabla u_n  |^2}{1+\frac 1n |\nabla u_n
|^2} \, .
\end{equation}
Let $\varepsilon >0$ be fixed. Observe that if, for $ k>1$, we use $T_1
(G_{k-1} (u_n))$  as test function in \rife{(P_n)} and drop positive
terms, we deduce that
$$
\dys \int_{\{u_n \geq k\}} g_n(x,u_n)\frac{|\nabla u_n|^2}{1+\frac{1}{n}|\nabla
u_n|^2} \,  \leq \dys \int_{\{u_n \geq k-1\}} f_n \,  \leq \dys \int_{\{u_n
\geq k-1\}} f\,  .
$$
Thus, since the right hand side tends to $0$ uniformly in $n$ as $k$ diverges, we obtain the existence of $k_0 >1$ such that
$$
\dys \int_{\{u_n \geq k\}} g_n(x,u_n)\frac{|\nabla u_n|^2}{1+\frac{1}{n}|\nabla
u_n|^2} \,
  \leq \frac{\varepsilon}{2}
  , \quad \forall k\geq k_0, \ \forall n \in \mathbb{N}.
$$
Moreover, since $T_k (u_n)$ is
strongly compact in $H^1_{\rm loc} (\Omega)$, there exist $n_{\eps}$, $\delta_{\eps}$ such that  for every $E\subset \subset \Omega$ with
$
\mbox{meas}\, (E)<\delta_{\eps}
$ we have
$$
\int_{E\cap \{u_n \leq k \} }  |\nabla T_k (u_n )|^2 \, <
 \frac{\eps}{2 c_k (\omega)  } , \quad  \forall n\geq n_\eps .
 $$
In conclusion, by \rife{espi}, taking $k\geq k_0$ we see that $\mbox{meas\,}(E) <\delta_\varepsilon$ implies
$$
\dys
  \int_E |g_n(x,u_n(x))| \frac{|\nabla u_n (x)|^2}{1+\frac 1n
|\nabla u_n (x)|^2} \,
          \leq  \eps , \quad \forall n\geq n_\eps ,
$$
 i.e., the sequence
$g_n(x,u_n) \frac{|\nabla u_n
|^2}{1+\frac 1n |\nabla u_n |^2}$ is equiintegrable. This, together with its
a.e.\ convergence to $g (x,u )  |\nabla u  |^2$, implies by Vitali theorem that
$$
\lim_{n\to +\infty} \int_{\Omega } g_n(x,u_n)\frac{|\nabla
u_n|^2}{1+\frac{1}{n}|\nabla u_n|^2} \phi \,  =\intO   g  (x,u) |\D u |^2 \phi ,
\quad
\forall \phi\in C^{\infty}_c (\Omega)
.
$$

Therefore, using the above limit,  \rife{nete} and since $f_n$ tends to $f$ strongly in $L^{1}(\Omega)$ we conclude that
$$
\int_{\Omega}M(x,u)\nabla u \nabla \phi \, + \int_{\Omega} g(x,u)|\nabla
u|^2\phi\,
=
\int_{\Omega}f \phi \,,  \quad \forall\phi\in C_c^\infty(\Omega).
$$
\end{proof}

\begin{remark}              \label{continuidad} {\rm
In addition, if $f\in L^q(\Omega )$ with $q>N/2$, then the solution $u$ given
by Theorem~\ref{teorema} is continuous in ${\Omega}$. Indeed,   by using
$\psi=T_m(G_k(u))$, with $m>k$,  as test function in \eqref{enerfin}, it is easy to adapt the idea of Stampacchia (\cite{S}) in order to obtain
\salta{
$$
\alpha\intO |\D G_k (u)|^2 \, \leq \intO f \,G_k (u) ,
$$}
that $u\in L^\infty (\Omega)$. Now, consider a function $\zeta\in
C^\infty (\Omega) $ with $0\leq \zeta(x)\leq 1$, for every
$x\in\Omega$ and compact support in a ball $B_\rho$ of radius
$\rho>0$, and set $A_{k,\rho}=\{x\in B_\rho\cap \Omega: u(x)>k\}$.
Following the idea of the proof of Theorem~1.1 of Chapter~4 in
\cite{lad}, take $\phi =\gku\zeta^2$ as test function in
\rife{enerfin} to deduce by \rife{coe} and H\"{o}lder's inequality that
$$
\alpha \int_{A_{k,\rho}} |\D u|^2 \zeta^2
\leq
\|f\|_{L^q(\Omega)}\|u\|_{L^\infty(\Omega)} (\mis{A_{k,\rho}})^{1-\frac 1q} +
2\beta \int_{A_{k,\rho}} |\D u| |\D \zeta| \zeta G_k(u).
$$
Using again Young's inequality we get
$$
\int_{A_{k,\rho}} |\D u|^2 \zeta^2
                          \leq
 \frac{2\|f\|_{L^q(\Omega)}\|u\|_{L^\infty(\Omega)}}{\alpha}
(\mis{A_{k,\rho}})^{1-\frac 1q}
       + \frac{4\beta}{\alpha^2} \int_{A_{k,\rho}} |\D \zeta|^2 G_k^2(u).
$$
In particular,  if
for $\sigma \in (0,1)$ we choose $\zeta$ such that it is constantly equal to~1 in
the concentric ball $B_{\rho- \sigma
\rho}$ (to $B_\rho$) of radius $\rho- \sigma\rho$ and
$|\D \zeta|< \frac{1}{\sigma\rho}$, we obtain
\begin{eqnarray*}
\dys \int_{A_{k,\rho-\sigma \rho}} |\D u|^2 \leq   \gamma
\left(1+\frac{1}{\sigma^2\rho^{2(1-\frac{N}{2q})}}
\max_{A_{k,\rho}}(u-k)^2\right) (\mis{A_{k,\rho}})^{1-\frac 1q}
,
\end{eqnarray*}
where $\gamma=\max \left\{
\frac{2\|f\|_{L^q(\Omega)}\|u\|_{L^\infty(\Omega)}}{\alpha},
\frac{4\beta}{\alpha^2}\omega_N^{\frac1q}\right\}$ with  $\omega_N$ denoting
the measure of the unit ball of $\rn$.

This means that for $\delta >0$ small enough and  every $M\geq \|u\|_{\lio} $,
the function $u$ belongs to the class $\mathcal B_2(\Omega, M, \gamma, \delta,
\frac1{2q})$ with $2q>N $ {(see \cite{lad}, pag. 81). Applying Theorem~6.1 of
\cite{lad} we deduce that $u$ is H\"{o}lder continuous in $\Omega$.
}}
\end{remark}

\setcounter{equation}{0}
\section{Further Existence Results}\label{fur}

\subsection{Existence for data in $\elle1$}\label{sec3}

In this section we prove Theorem~\ref{th1}. In this
case, taking advantage of Theorem~\ref{teorema}, we approximate problem
\rife{prob} by
\begin{equation}    \label{pn}
      \begin{cases}
     \displaystyle  -\mbox{div\,}(M(x,u_n)\nabla u_n) +g (x,u_n ) |\nabla u_n|^2
       =f_n\quad & \mbox{in } \Omega, \\
\hfill u_n=0 \hfill & \mbox{on } \partial \Omega,
        \end{cases}
\end{equation}
where $f_n=T_n (f)$.

Note that the existence of a nonnegative finite energy solution $ u_n\in H^1_0 (\Omega)\cap
C( {\Omega })$ such that $g(x,u_n ) |\nabla u_n|^2\in L^1
(\Omega)$ follows from Theorem \ref{teorema} and Remark~\ref{continuidad}.

\begin{lemma}\label{lemma}\sl
If $f\in L^1(\Omega )$ satisfies \rife{cona}, $g(x,s)$ satisfies
\rife{cong-2} (with $h(s)$ satisfying \rife{cong-1}),  and $u_n$ is a solution
of \rife{pn}, then
\begin{enumerate}

\item[(i)]  $u_n$ is bounded in $\mathcal M^{\frac{N}{N-2}}(\Omega)$ and $|\D u_n| $ is bounded in $\mathcal M^{\frac{N}{N-1}} (\Omega)$;

\item[(ii)]   up to  subsequences, the sequence $u_n$ is weakly convergent to some
$u$ in $W_0^{1,q} (\Omega )$ for every $q\in [1,\frac{N}{N-1})$;

\item[(iii)]  for any
$k>0$ and for any $\omega\subset \subset \Omega$,
$$
T_k (u_n)\to T_k (u)\ \ \ \text{in }\ \ H^1(\omega).
$$
\end{enumerate}
\end{lemma}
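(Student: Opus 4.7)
The plan is to combine the classical Boccardo-Gallou\"et strategy for $L^1$ data with Proposition~\ref{prop2} to control the singular lower order term. The key preliminary observation, stated in Remark~\ref{recta}(ii), is that each $u_n \in H_0^1(\Omega)\cap C(\Omega)$ (given by Theorem~\ref{teorema} together with Remark~\ref{continuidad}) is a supersolution of \rife{eqh} with datum $f_1$, so for every $\omega'\subset\subset\Omega$ there exists $c_{\omega'}>0$, independent of $n$, with $u_n\geq c_{\omega'}$ in $\omega'$; consequently $g(x,u_n)\leq c_k(\omega'):=\max_{[c_{\omega'},k]} h$ on $\{u_n\leq k\}\cap\omega'$.

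For (i), I would test \rife{pn} with $T_k(u_n)$ and drop the nonnegative singular term, which by \rife{coe} gives the classical energy bound $\int_\Omega |\nabla T_k(u_n)|^2 \leq (k/\alpha)\|f\|_{L^1(\Omega)}$. Then the standard Boccardo-Gallou\"et argument (level set analysis combined with the Sobolev inequality) yields both Marcinkiewicz bounds. Part (ii) follows at once: the embedding \rife{imm} turns the bound on $|\nabla u_n|$ in $\mathcal M^{N/(N-1)}(\Omega)$ into boundedness of $\{u_n\}$ in $W_0^{1,q}(\Omega)$ for every $q<N/(N-1)$, and Banach-Alaoglu together with Rellich-Kondrachov and a diagonal extraction furnish a weakly convergent (and a.e.\ convergent) subsequence to some $u$.

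For (iii), I would mirror verbatim Step~1 of the proof of Theorem~\ref{teorema}. Fix $\omega\subset\subset\Omega$, pick $\phi\in C_c^\infty(\Omega)$ with $0\leq\phi\leq 1$ and $\phi\equiv 1$ on $\omega$, and set $\omega_\phi := \mathrm{supp}\,\phi$. Test \rife{pn} with $\varphi_\lambda(T_k(u_n)-T_k(u))\phi$; on $\{u_n\geq k\}$ the argument of $\varphi_\lambda$ equals $k-T_k(u)\geq 0$, so $\varphi_\lambda\geq 0$ there and
$$
\int_\Omega g(x,u_n)|\nabla u_n|^2\,\varphi_\lambda(T_k(u_n)-T_k(u))\,\phi \;\geq\; -c_k(\omega_\phi)\int_\Omega |\nabla T_k(u_n)|^2\,|\varphi_\lambda(T_k(u_n)-T_k(u))|\,\phi,
$$
in complete analogy with the argument in Section~\ref{sec2}. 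Selecting $\lambda$ so that \rife{vpla} applies with $a=\alpha$ and $b=2c_k(\omega_\phi)$ absorbs the bad term, while the remaining terms vanish as $n\to\infty$ thanks to (ii), the uniform energy bound on $T_k(u_n)$, and the fact that $f_n\to f$ in $L^1(\Omega)$.

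The main obstacle is (iii): with only $L^1$ data there is no global control on $g(x,u_n)|\nabla u_n|^2$, so the reduction to a \emph{local} $L^\infty$ bound on $g(x,u_n)$ over the truncated regime $\{u_n\leq k\}\cap\omega_\phi$, supplied by Proposition~\ref{prop2}, is the decisive ingredient that allows the $\varphi_\lambda$-trick of \cite{Port} to close.
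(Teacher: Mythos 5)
Your treatment of (i) and (ii) coincides with the paper's: test with $T_k(u_n)$, drop the nonnegative singular term to get $\alpha\intO|\D T_k(u_n)|^2\leq k\|f\|_{\elle1}$, invoke the standard Boccardo--Gallou\"et level-set estimates for the Marcinkiewicz bounds, and use \rife{imm} for the weak $W^{1,q}_0$ compactness. The use of Remark~\ref{recta}(ii) to get the local bound $g(x,u_n)\leq c_k(\omega_\phi)$ on $\{u_n\leq k\}$ is also exactly the paper's decisive ingredient.

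Part (iii), however, has a genuine gap: Step~1 of Theorem~\ref{teorema} cannot be mirrored verbatim, because it uses the global $\huz$ bound on $u_n$, which is precisely what is unavailable for $L^1$ data. Concretely, with the test function $\vp_{\la}(T_k(u_n)-T_k(u))\phi$ the principal term must be split over $\{u_n<k\}$ and $\{u_n\geq k\}$, and on the latter set it reduces to
$$
-\int_{\{u_n\geq k\}} M(x,u_n)\,\D u_n\cdot\D T_k(u)\;\vp_{\la}'(k-T_k(u))\,\phi .
$$
In Theorem~\ref{teorema} this is $\eps(n)$ because $\D u_n$ is bounded in $(L^2(\Omega))^N$ while $\D T_k(u)\chi_{\{u_n\geq k\}}\to 0$ strongly in $(L^2(\Omega))^N$. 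Here, on $\{u_n\geq k\}$ one has $\D u_n=\D G_k(u_n)$ a.e., and $\D G_k(u_n)$ is only bounded in $\mathcal M^{\frac{N}{N-1}}(\Omega)$, i.e.\ in $L^q$ with $q<\frac{N}{N-1}<2$; pairing it against $\D T_k(u)\in L^2$ would require $\D T_k(u)\in L^{q'}$ with $q'>N$, which you do not have. So this term cannot be shown to vanish (nor even be uniformly controlled) by your means. The paper avoids the problem by using the Leone--Porretta test function $\vp_{\la}(w_n)\phi$ with
$$
w_n=T_{2k}\bigl[u_n-T_l(u_n)+T_k(u_n)-T_k(u)\bigr],\qquad 0<k<l,
$$
whose gradient vanishes on $\{u_n\geq 2k+l\}$; consequently only $\D T_{2k+l}(u_n)$ — uniformly bounded in $L^2$ by \rife{circulo} — ever appears in the principal term, all pairings are between $L^2$ functions, and the residual contribution $\vp_{\la}(2k)\int_{\{u\geq l\}}(f+\beta|\D u||\D\phi|)$ is removed at the end by letting $l\to+\infty$. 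To repair your proof you must replace your test function by this double-truncated one (or an equivalent device localizing in the $u_n$ variable).
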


\begin{proof}[{\sl Proof}] (i) Taking
$T_{k} (u_n) $ as test function in \rife{pn} and using \rife{coe}, we have
$$
\alpha\intO |\D T_{k} (u_n)|^2 \, + \intO g (x,u_n ) T_{k} (u_n) |\D u_n|^2 \,
\leq k \|f_n\|_{\elle1} .
$$
Since $0\leq f_n \leq f$ and  $g(x,u_n) \geq 0$, we have
\begin{equation}
                      \label{circulo}
\alpha\intO |\D T_{k} (u_n)|^2  \ \leq  k  \|f\|_{\elle1}.
\end{equation}
Standard estimates (see \cite[Lemmas~4.1 and 4.2]{b6}) imply that $u_n$ is
bounded in $\mathcal M^{\frac{N}{N-2}}(\Omega)$ and that $|\D u_n| $ is bounded
in $\mathcal M^{\frac{N}{N-1}} (\Omega)$.

\noindent (ii) Let $1\leq q<\frac{N}{N-1}$. By the preceding
case and by
the embedding \rife{imm}, we deduce that
$u_n$ is bounded in $W^{1,q}_0 (\Omega)$  and thus, passing to a subsequence if
necessary, there exists $u $ such that $u_n \rightharpoonup  u$ weakly in
$W^{1,q}_0 (\Omega)$.

\noindent (iii) Our aim is to show that
$$
\dys  \lim_{n\to +\infty} \intO |\D (T_k (u_n) -T_k (u))|^2\phi =0\, ,
\qquad \forall \phi \in C_c^\infty (\Omega )\, ,\, \,  \phi\geq 0.
$$

Here we adapt to our case a technique to obtain the strong convergence of
truncations first introduced in \cite{leoporr} (see also \cite{Porretta}). Let
us choose $\vp_{\la } (w_n ) \phi $ as test function in \rife{pn} where
$\vp_{\la } (s)$ has been defined in \rife{vpla} and
$$
w_n =T_{2k} [u_n - T_l (u_n) + T_k (u_n) - T_k (u)],\qquad 0<k<l.
$$
Thus we have
\begin{equation}\label{1}
\begin{array}{l}
\dys \intO M(x,u_n)\D u_n \cdot \D w_n \vp_{\la}' (w_n)\phi  \,   +
\dys \intO M(x,u_n)\D u_n \cdot \D \phi   \vp_{\la} (w_n) \, \\\\
\dys\quad +\intO  g (x,u_n) |\D u_n|^2 \vp_{\la}(w_n) \phi\,
= \intO f_n \,\phi  \vp_{\la} (w_n)\,  .
\end{array}
\end{equation}

\noindent Observing that $\D T_k (u_n)=0$ if $u_n>k$ and $\D w_n\equiv 0 $ if $u_n \geq {  2k +l} \equiv
\mathcal{K}$
(we recall that $l>k$), we have
$$
\begin{array}{l}
\dys \intO M(x,u_n) \D u_n \cdot \D w_n \vp_{\la}' (w_n)\phi \,  \\\\
\dys  \quad= \intO
M(x,u_n)\D T_k (u_n) \cdot \D (T_k (u_n) -T_k (u)) \vp_{\la}' (w_n)\phi \, \\\\
\dys\qquad + \int_{\{u_n\geq  k\}} M(x,u_n)\D T_\mathcal{K} (u_n) \cdot \D T_{2k}(G_l (u_n ) + k
-T_k (u)) \vp_{\la}' (w_n)\phi \,  .
\end{array}
$$
Moreover, using that
\begin{eqnarray*}
\D T_{\mathcal{K}} (u_n) \cdot \D (G_l (u_n )  -T_k (u))&=& \D T_{\mathcal{K}} (u_n) \cdot \D G_l
(u_n )  - \D T_\mathcal{K} (u_n)  \D T_k (u) \\
&\geq& - \D T_{\mathcal{K}} (u_n) \cdot \D T_k (u)  ,
\end{eqnarray*}
  we have
$$
\begin{array}{l}
\dys  \int_{\{u_n> k \}\cap \{ \, G_l (u_n )  -T_k (u) \leq k\}}
M(x,u_n)\D T_{\mathcal{K}} (u_n) \cdot \D (G_l (u_n )  -T_k (u)) \vp_{\la}'
(w_n)\phi \,
\\
\quad  \dys\geq-  \int_{ \{G_l (u_n ) + k -T_k (u) \leq 2k\}}| M(x,u_n)\D
T_{\mathcal{K}} (u_n) \cdot \D  T_k (u)| \vp_{\la}' (w_n)\phi
\chi_{\{u_n>k\}}\, ,
\end{array}
$$
and thus, since the above integral tends to zero as $n$ diverges,
\begin{equation}                  \label{quela}
                  \begin{array}{l}
\dys \intO M(x,u_n) \D u_n \cdot \D w_n \vp_{\la}' (w_n)\phi \,  \\\\
\dys\qquad\geq  \intO
M(x,u_n) \D T_k (u_n) \cdot \D (T_k (u_n) -T_k (u)) \vp_{\la}'
(w_n)\phi \,  +\eps (n).
\end{array}
\end{equation}

On the other hand, since $G_l (u_n) +k-T_k (u) \geq 0$,
$$
\begin{array}{l}
\dys\intO  g (x,u_n)  |\D u_n|^2 \vp_{\la}(w_n) \phi \,
                                                        \dys\geq \int_{\{u_n\leq k\}}  g (x,u_n) |\D u_n|^2 \vp_{\la}(w_n) \phi \,  \,.
\end{array}
$$

Thanks to Case (ii) of Remark~\ref{recta}  applied to a subset $\omega_\phi
\subset \subset \Omega$ with $\mbox{ supp\,} \phi\subset \omega_\phi$, we have
$g(x,u_n(x)) \leq c_{k}(\omega_\phi) $ for every $x\in \omega$ with $
{u_n(x)}\leq k$. Then, we get
$$
\begin{array}{l}
\dys \left| \int_{\{u_n\leq k\}}  g (x,u_n)  |\D u_n |^2 \vp_{\la}(T_k
(u_n) -T_k (u)) \phi \,   \right|
\\\\
\dys \leq c_{k}(\omega_\phi) \intO |\D T_k (u_n)|^2  |\vp_{\la}(T_k (u_n) -T_k
(u))| \phi \,
\\\\
\dys  \leq 2 c_{k}(\omega_\phi) \intO |\D (T_k (u_n)-T_k (u))|^2
|\vp_{\la}(T_k (u_n) -T_k (u))| \phi  \,
\\\\
\qquad+2 c_{k}(\omega_\phi) \intO |\D T_k (u)|^2  |\vp_{\la}(T_k (u_n)
-T_k (u))| \phi\,.
\end{array}
$$

\noindent Note that the last integral tends to $0$ as $n$ diverges since
$\vp_{\la}(T_k (u_n) -T_k (u))$ converges to zero in the weak-$\ast$ topology
of $\lio$ and $T_k (u)\in \huz$.  Therefore, we deduce
from this, \rife{1} and \rife{quela} that
$$
\begin{array}{l}
\dys \intO M(x,u_n)\D T_k (u_n)\cdot  \D (T_k (u_n) -T_k (u)) \vp_{\la}' (w_n)\phi
\,  \\\\
\dys-2c_{k}(\omega_\phi) \intO |\D(T_k(u_n)-T_k (u))|^2
|\vp_{\la}(w_n)|\phi \,  \\\\
\dys
\leq \intO f_n \phi  \vp_{\la} (w_n)\ - \intO M(x,u_n) \D u_n \cdot \D \phi
\vp_{\la} (w_n)\,   +\eps(n),
\end{array}
$$
and adding to both sides of the previous inequality
$$
- \intO M(x,u_n)\D T_k (u)  \cdot \D (T_k (u_n) -T_k (u)) \vp_{\la}'
(w_n)\phi\, =\eps(n),
$$
we find from \rife{coe},
$$
\begin{array}{l}
 \displaystyle \intO |\D   ( T_k (u_n)- T_k (u)  )|^2  \Big[ \alpha
\vp_{\la}' (w_n)-2{c_{k}(\omega_\phi)} |\vp_{\la}(w_n)|
\Big] \phi \,
\\
\displaystyle \leq \intO f_n \phi  \vp_{\la} (w_n) \ - \intO M(x,u_n)\D u_n
\cdot \D \phi\, \vp_{\la} (w_n) \,  +\eps(n).
\end{array}
$$
Choosing $\lambda$ such that $\vp_\la$ satisfies \rife{vpla}
with $a=\alpha$ and $b=2c_k(\omega_\phi)$, we get
$$
\begin{array}{l}
\dys\frac\alpha2\intO   |\D (T_k (u_n) -T_k (u))|^2\phi  \,   \\\\
\dys \leq \intO f_n \phi  \vp_{\la} (w_n)\  - \intO M(x,u_n)\D u_n \cdot \D
\phi \, \vp_{\la} (w_n) \,  +\eps(n).
\end{array}
$$
Moreover, $w_n$ a.e. (and weakly-$\ast$ in  $\lio$) converges towards $w=T_{2k}
(G_l (u))$ and thus, recalling that $\D u_n \to \D u $ weakly in
$(L^q(\Omega))^N$, $q<N/(N-1)$,

$$
\begin{array}{l}
\dys \lim_{n\to +\infty} \intO f_n \phi  \vp_{\la} (w_n) \,  - \intO M(x,u_n)\D
u_n \cdot \D \phi   \vp_{\la} (w_n) \,
    \\\\ \qquad   =
       \intO f  \phi  \vp_{\la} (w) \,  -  \intO M(x,u) \D u  \cdot \D \phi   \vp_{\la} (w)\, .
\end{array}
$$

\noindent Consequently, using \rife{coe}
$$
\frac\alpha2\intO   |\D (T_k (u_n) -T_k (u))|^2\phi \,
\leq \intO f  \phi  \vp_{\la} (w)  \, \dys -  \intO M(x,u) \D u  \cdot \D \phi\,
\vp_{\la} (w)  \,  +\eps(n)
$$ $$
\dys  \leq \vp_{\la} (2k) \int_{\{u\geq l\} } (f  +  \beta |\D u|| \D \phi  |)
\,  +\eps(n).
$$
\noindent
       Since the last integral tends to zero  as $l$ diverges,
(iii) is proved.
\end{proof}

Now, we prove our main result concerning $\elle1$ data: \noindent
\begin{proof}[{\sl Proof of Theorem \ref{th1}}]
We begin by proving the first part of the theorem, i.e.\ that there exists a
solution $u\in W^{1,q}_0 (\Omega)$, for every $q<\frac{N}{N-1}$, of problem
\rife{prob}.
We first observe that we deduce from the results of \cite{BM} that $\D u_n \to \D u$  a.e., and from Lemma \ref{lemma} the estimates on $u_n$ and $|\D u_n|$ in $\mathcal M^{\frac{N}{N-2}}(\Omega)$ and $\mathcal M^{\frac{N}{N-1}}(\Omega)$
respectively. Thus
$u_n \to  u$ strongly in $W^{1,q}_0 (\Omega)$, for every
$q<\frac{N}{N-1}$. Arguing as in the proof of Theorem~\ref{teorema}, we can show that, choosing $\frac{1}{\varepsilon}T_\varepsilon (u_n)$ as test function in \rife{pn} and applying Fatou lemma, we have $ g (x,u )  |\D u |^2\in L^1 (\Omega)$.

In order   to prove that for all   $\omega
\subset \subset \Omega$, $\{g (x,u_n) |\D u_n|^2\}$ is strongly {convergent} in
$L^1 (\omega)$ to $g (x,u) |\D u|^2$, it suffices to show  the local uniform
equiintegrability of such sequence. To prove the claim,
we choose $T_1 (G_{k-1} (u_n))$  (for $k>1$) as test
function in the equation \rife{pn} and we deduce, by dropping the first
positive term (in virtue of \rife{coe}), and since $f_n\leq f$, that
\begin{equation}\label{equi}
\dys \int_{\{u_n \geq k\}} g (x,u_n)  |\D u_n|^2 \,  \leq \int_{\{u_n \geq k-1\}} f\,  \, .
\end{equation}
By a similar argument to the one used in Step~3 of the proof of
Theorem~\ref{teorema}, we prove the claim.  Indeed, let
$E\subset \omega\subset\subset\Omega$ be a measurable set. By
Remark~\ref{recta}-(ii) and \rife{equi}, we have, $\forall k
1$,
$$
\begin{array}{l}
\dys \int_E g (x,u_n)  |\D u_n|^2\   =
\int_{E\cap \{u_n \leq k \}} g (x,u_n)  |\D u_n |^2 \,
\\\\ +
 \dys \int_{E\cap \{u_n \geq k \}} g (x,u_n)  |\D u_n|^2 \,
\leq
 c_{k}(\omega) \dys \int_{E\cap \{u_n \leq k \}}   |\D T_k (u_n) |^2 \, \\\\+
\dys \int_{ \{u_n \geq k \}} g (x,u_n) |\D u_n|^2 \,
\leq
 c_{k}(\omega) \dys \int_{E}   |\D T_k (u_n) |^2 \,  +
\dys \int_{ \{u_n \geq k-1 \}} f \, .
\end{array}
$$
Since $\mis(\{x\in\Omega \, :\, u_n \geq k-1\})$ tends to zero (uniformly with
respect to $n$) as $k$ tends to ${+\infty}$ (because of the boundedness of $\{
u_n\}$ in the space $\mathcal M^{N/(N-2)} (\Omega )$ by
Lemma~\ref{lemma}-(ii)), we obtain that the last integral in the above
inequalities tends to zero as $k$ goes to $+\infty$. This, and the local
equiintegrability of $|\D T_k(u_n)|^2$  (by Lemma~\ref{lemma}-(iii)), then show
the local equiintegrability of $\{ g (x,u_n)  |\D u_n|^2\} $.

Using moreover that $\D u_n \to \D u$ a.e., we conclude by Vitali theorem that
\begin{equation}\label{cpt1}
g (x,u_n)  |\D u_n|^2\to g (x,u)  |\D u|^2 \quad \mbox{in }\,L^1 (\omega)\,, \,\,\forall \omega \subset \subset \Omega\,.
\end{equation}
Now, using \rife{cpt1} and the strong convergence of $\D u_n $ to $\D u$ in
$(\elle{q})^N$, for every $ q<\frac{N}{N-1}$,
we can pass to the limit in \rife{pn}
to show that $u$ is  a solution for \rife{prob}.

\smallskip

In order to prove the second part of the theorem, we simply note
that we can fix $k\geq \max\{s_0,1\}$ so that
\rife{coerc} and \rife{equi} imply
\begin{equation}\label{geca}
\mu \int_{\Omega } |\D
G_{k} (u_n)|^2 \,  = \mu \int_{\{ u_n \geq k\}} |\D u_n|^2  \,  \leq
\int_{\{u_n\geq k-1\}} f \,  \leq \|f\|_{\elle1}.
\end{equation}
Hence, taking into account both \rife{circulo} and \rife{geca}, we have
$$
\into |\nabla u_n|^2 \, = \into |\nabla T_{k} (u_n)|^2 \,  + \into |\nabla
G_{k} (u_n)|^2 \,  \leq
\left(\frac{k}{\alpha}+\frac{1}{\mu}\right)\|f\|_{\elle1},
$$
i.e., the boundedness of the sequence $\{u_n\}$ in $\huz$. This implies that
the solution $u$, which is the limit of (a subsequence of) $\{u_n\}$, belongs
to $\huz$.
\end{proof}

\begin{remark}\rm
Actually, if \rife{coerc} holds, it is possible to prove, in this latter case,  that the approximate sequence $u_n$ is strongly convergent to $u$ in $H^1(\omega)$, for every $\omega\subset\subset\Omega$. Indeed, due to the a.e. convergence of $\nabla u_n$ to $\nabla u$ in $\Omega$, it suffices to check the equiintegrability of $|\nabla u_n|^2$ in every $\omega\subset\subset\Omega$.
To do that, we take a measurable set $E\subset \omega\subset\subset\Omega$, and we observe that, thanks to \rife{geca}, for any $k\geq \max\{s_0,1\}$, we can write
\begin{eqnarray}
\dys \int_{E}|\nabla u_n|^2 \,  &=&\int_{E}|\nabla T_k (u_n)|^2 \,
+\int_{E}|\nabla G_k (u_n)|^2 \,
  \nonumber
\\
&\leq& \int_{E}|\nabla T_k (u_n)|^2 \, + \frac{1}{\mu}\int_{\{u_n \geq k-1\}} f
\, . \label{unifor}
\end{eqnarray}
Therefore, using again both the boundedness of $u_n$ in $\mathcal{M}^{\frac{N}{N-2}}(\Omega)$ and  the equiintegrability of $|\nabla T_k(u_n)|^2$  in $\omega$ given by Lemma \ref{lemma}, we see  that  \rife{unifor} yields the desired result.
\end{remark}

\subsection{Non-divergence operators}\label{nodi}

In this section we sketch the proof of Theorem \ref{notdiv} without
giving all the details since they are straightforward adaptations of
the applied arguments  in the proof of Theorem \ref{teorema}.

\begin{proof}[{\sl Proof of Theorem~\ref{notdiv}}]
We denote by $P(x)$ the vector field whose  $i^{\mbox{\tiny th}}$
component is \\ $P_i(x)= b_i(x) +\sum_{j=1}^N \frac{\partial
a_{ij}}{\partial x_j}(x)$ ($i=1,\ldots,N$) and by $M(x)$ the
transpose  of the matrix $(a_{ij}(x))_{i,j=1,\ldots,N}$. Let  $g_n
(x,s)$ also  be given by \rife{condult}. Consider the sequence
$u_n\in \huz\cap\lio$ of solutions for the problem
\be\label{nodivapp}
\begin{cases}
\displaystyle
-\div (M(x)\D u_n ) + P (x) \cdot \D u_n 
+ g_n(x, u_n) |\nabla u_n|^2=
f & \mbox{in } \Omega, \\
\hfill u_n =0 \hfill &\mbox{on } \partial \Omega.
\end{cases}
\ee

The proof is divided into several steps.

{\bf Step 1.}  $\lio$ estimate. Using the ideas of \cite{BMP}, we
choose $v=e^{2\lambda G_k (u_n)} -1 $, with $\la >>1$ as test
function in the weak formulation of \rife{nodivapp} to prove  that
the sequence $\{u_n\}$ is bounded in $\lio$.

{\bf Step 2.} $\huz$ estimate.  By the previous $\lio $ estimate, it
is easy to see that $\{u_n\}$ is bounded in  $\huz$, and so $u_n $
weakly converges in $\huz$ to a function $u$ in $\huz\cap \lio$.
Moreover, arguing as in Remark \ref{continuidad}, it is clear that
both $u_n$ (for every $n\in\na$) and $u$ are continuous in $\Omega$.

{\bf Step 3.} Estimate on the lower order term. Choosing
$\frac{T_\varepsilon(u_n)}{\varepsilon}$ as test function in
\eqref{nodivapp} and taking limit as $\varepsilon$ tends to zero, we
deduce, for some $C_1>0$, that
$$ \intO \dys g_n(x, u_n) |\nabla u_n|^2 \leq \intO|f| + C_1. \,$$

{\bf Step 4.} Uniform bound from below for $u_n$ in compact sets.
Observe that $u_n$ are supersolutions of the equation \be\label{cha}
-\div (M(x)\D u ) + P (x) \cdot \D u  +h(u) |\nabla u|^2 = f \quad
  \mbox{in } \Omega .
\ee
  If, for $H(s)$  defined in \rife{acca} and $\phi\in C^{\infty}_c (\Omega)$,  we take
  $e^{-\frac{H (u)}{\alpha}} \phi$ as test function in \eqref{cha},
  we see that $v_n= \psi (u_n) $ are subsolutions of
  \be\label{nodivch}
\displaystyle -\div (M (x) \D v ) + P(x) \cdot \D v + b(v)f (x)
=0 \quad
  \mbox{in } \Omega,
 \ee
where $b(s)$ has been defined in \rife{b(s)} and we recall that it
satisfies the Keller-Osserman condition (see \rife{k-o}). Hence, by
Theorem~\ref{thleoni} in the Appendix, we conclude that for every
$\omega \subset \subset \Omega$, there exists $C_{\omega}$ such that
$v_n=\psi(u_n) \leq C_{\omega}$ in $\omega$. Therefore, $u_n \geq
c_{\omega}>0$ in $\omega$, with $c_\omega = \psi ^{-1} (C_\omega)$.

{\bf Step 5.} Compactness of $\{u_n\}$ in $H_{\mbox{\tiny loc}}^1
(\Omega)$. For $\vpla (s)$ defined in \rife{vpla} and $\phi\in
C_c^{\infty} (\Omega)$, we  choose $\vpla (u_n-u)\phi $ as test
function in the weak formulation of \rife{nodiv} and  we note that
the ideas of Theorem~\ref{teorema} works since the \emph{new term}
that appears in the equation does not lead to any further difficulty
because it is linear with respect to $\nabla u$. Thus we conclude
that, up to a subsequence,
$$
u_n \to u \quad \mbox{in} \,\,H^1_{\rm loc} (\Omega)\,.
$$

{\bf Step 6.} Passing to the limit. By Step 5, we pass to the limit
in the weak formulation of \rife{nodivapp} to deduce  that  $u$ is a
solution  for
\begin{equation}   \label{nodiv}
\begin{cases}
-\div (M(x)\D u ) + P (x) \cdot \D u  + g(x, u) |\nabla u|^2 =
f & \mbox{in } \Omega, \\
\hfill u=0 \hfill &\mbox{on } \partial \Omega .\,
\end{cases}
\end{equation}
Since the coefficients $a_{ij}$ are Lipschitz continuous on
$\Omega$, we see that $u$ solves \eqref{nodive}. Finally, by Step 3
we conclude that $g(x,u)|\D u|^2\in L^1(\Omega)$.
\end{proof}

\setcounter{equation}{0}
\section{Nonexistence results}

This section is devoted to study  nonexistence of solutions for
\rife{prob}. We begin by observing that if the function
$g(x,s)$ satisfies condition \rife{hcero} with $h$ such that
\rife{contador} and \rife{pereiro} hold, then we can change $h$ by a
smaller function $\overline{h}$ which, in addition to
\rife{contador} and \rife{pereiro}, also satisfies $\overline{h}(s)=0$
for every $s>1$. Indeed, if $s_0$ is the point where $h$ attains its
minimum value in $[\frac{1}{2},1]$, then it suffices to define
$$
\overline{h}(s)=\left\{ \begin{array}{cl} (h(s)- h(s_0))^+ &
\mbox{if }s\in (0,s_0],
\\
0 & \mbox{if }s>s_0 .
\end{array}
\right.
$$
Consequently, without loss of generality, we will assume in the following that
condition \rife{hcero} holds with $h$ satisfying \rife{contador},
\rife{pereiro}, and
\begin{equation}
                  \label{h10bis}
h(s)=0,\ \ \
\forall  s\geq 1 .
\end{equation}

Let us consider the function $G:(0,+\infty)\to
(0,+\infty)$ given by
$$
G(s) = \displaystyle {\rm e}^{\displaystyle \int_1^s \frac{h(t)}{\beta}dt}  \quad\mbox{for every } s>0,
$$
where $\beta$ is given by \rife{coe}.
Observe that, by \rife{contador}, the function $G$
can be continuously extended to $[0,+\infty)$  setting $G(0)=0$. 
    Moreover, we also define the function
$\sigma:[0,+\infty)\to[0,+\infty) $ by setting $\sigma(0)=0$ and
$$
\sigma(s) = {\rm e}^{\displaystyle \int_1^s \sqrt{h(t)}dt} \quad  \mbox{for every } s>0.
$$

Observe that, thanks to \rife{contador} and \rife{pereiro}, we have that
$\sigma\in C^1([0,+\infty))$, $\sigma'(0) = h_0$ and $\sigma(s)=0$ if and only
if $s=0$. As a consequence of \rife{h10bis}, $\sigma(s)=1$ for every $s>1$ and
$\sigma(s)\leq 1$ for every $s\geq 0$. The next lemma is the key for the proof of Theorem \ref{teoremah}.

\begin{lemma}   \label{lema1h}\sl
Assume \rife{contador} and \rife{pereiro}. Then the function
\begin{equation}    \label{fih}
\varphi(s)= \left\{
\begin{array}{cl}
\displaystyle\frac{\displaystyle \int_0^s G(t)[\sigma'(t)]^2 dt}{G(s)} &
\ \ \mbox{if } s>0,
\\\\
0& \ \ \mbox{if } s=0,
\end{array}
\right.
\end{equation}
is a continuously differentiable function on $[0,+\infty)$ that satisfies the
ordinary differential equation
\begin{equation}\label{rh}
\left\{
\begin{array}{cl}
\displaystyle\varphi'(s)+\frac{h(s)}{\beta}\varphi(s) = [\sigma'(s)]^2, &
\mbox{on } [0,+\infty),
\\
\varphi(0)=0.
\end{array}
\right.
\end{equation}
Moreover, the following inequality holds:
\begin{equation} \label{estima}
\varphi(s)\leq \beta[\sigma(s)]^2, \quad \forall s>0.
\end{equation}
\end{lemma}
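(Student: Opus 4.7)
The plan is to verify \rife{rh} by direct differentiation, deduce \rife{estima} by a comparison argument based on the integrating factor $G$, and finally check the $C^1$ regularity at $s=0$ using hypothesis \rife{pereiro}.

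First, note that $\sigma'(s)=\sqrt{h(s)}\,\sigma(s)$ and $G'(s)=\frac{h(s)}{\beta}\,G(s)$ on $(0,+\infty)$, so that in particular $[\sigma'(s)]^2 = h(s)\sigma(s)^2$. Writing $\varphi(s)=N(s)/G(s)$ with $N(s)=\int_0^s G(t)[\sigma'(t)]^2\,dt$, the quotient rule gives, for every $s>0$,
\[
\varphi'(s) \;=\; \frac{N'(s)}{G(s)} - \frac{N(s)\,G'(s)}{G(s)^2} \;=\; [\sigma'(s)]^2 - \frac{h(s)}{\beta}\,\varphi(s),
\]
which is exactly the ODE in \rife{rh}; in particular $\varphi\in C^1((0,+\infty))$. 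Continuity of $\varphi$ at $0$ follows from L'Hopital's rule applied to $N/G$: since $N(s),G(s)\to 0^+$ and $N'(s)/G'(s)=\beta[\sigma'(s)]^2/h(s)=\beta\sigma(s)^2\to 0$, we obtain $\varphi(s)\to 0=\varphi(0)$.

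Next, I would compare $\varphi$ with the candidate upper barrier $\Psi(s)=\beta\,\sigma(s)^2$. Using $\sigma'=\sqrt{h}\,\sigma$,
\[
\Psi'(s)+\frac{h(s)}{\beta}\,\Psi(s) \;=\; 2\beta\sqrt{h(s)}\,\sigma(s)^2 + h(s)\sigma(s)^2 \;\geq\; [\sigma'(s)]^2,
\]
so $\Psi$ is a supersolution of \rife{rh}. Subtracting and multiplying by the integrating factor $G$ yields $\bigl(G(\Psi-\varphi)\bigr)'\geq 0$ on $(0,+\infty)$; since $\Psi(s),\varphi(s)\to 0$ and $G(s)\to 0$ as $s\to 0^+$, the quantity $G(\Psi-\varphi)$ has limit $0$ at the origin and is nondecreasing, hence nonnegative on $(0,+\infty)$, which proves \rife{estima}.

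For the $C^1$ regularity at $s=0$ I would show that $\lim_{s\to 0^+}\varphi'(s)=0$, from which $\varphi'(0)=0$ follows by a standard mean-value argument. By \rife{pereiro} we have $\sigma'(s)=\sqrt{h(s)}\,\sigma(s)\to h_0$, and hence $[\sigma'(s)]^2\to h_0^2$. Setting $\chi=\varphi-\beta\sigma^2$, the preceding computations give $(G\chi)'=-2\beta\sqrt{h}\,\sigma^2 G$, and L'Hopital applied to $\chi/\sigma^2$ yields
\[
\lim_{s\to 0^+}\frac{\chi(s)}{\sigma(s)^2}
\;=\; \lim_{s\to 0^+}\frac{-2\beta\sqrt{h(s)}\,\sigma(s)^2 G(s)}{G(s)\sigma(s)^2\bigl(h(s)/\beta+2\sqrt{h(s)}\bigr)} \;=\; 0,
\]
since $\sqrt{h(s)}\to+\infty$ by \rife{contador}. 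Thus $\varphi(s)/[\beta\sigma(s)^2]\to 1$, which combined with $[\sigma'(s)]^2=h(s)\sigma(s)^2$ gives $\frac{h(s)}{\beta}\varphi(s)\to h_0^2$, and finally $\varphi'(s)\to 0$ via the ODE. I expect this last step to be the main obstacle: the crude bound \rife{estima} only gives $(h/\beta)\varphi\leq h\sigma^2$, and upgrading this to the sharp equivalence $\varphi(s)\sim\beta\sigma(s)^2$ near the origin is precisely where hypothesis \rife{pereiro} is used in an essential way.
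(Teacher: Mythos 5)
Your proposal is correct and follows essentially the same route as the paper: the ODE is immediate from the quotient rule, the inequality \rife{estima} comes from the identity $\bigl(G(s)[\beta\sigma(s)^2-\varphi(s)]\bigr)'=2\beta\sqrt{h(s)}\,\sigma(s)^2G(s)\geq 0$ (which is exactly the paper's integration by parts in differential form), and the $C^1$ regularity at the origin rests on the same L'H\^opital computation showing $\varphi(s)/[\beta\sigma(s)^2]\to 1$, where $\sqrt{h(s)}\to+\infty$ from \rife{contador} and $[\sigma'(s)]^2\to h_0^2$ from \rife{pereiro} are used just as in the paper. The only cosmetic differences are that you establish continuity of $\varphi$ at $0$ by L'H\^opital where the paper uses the monotonicity of $G$, and that you phrase the upper bound as a supersolution comparison rather than an explicit integration by parts.
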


\begin{proof}[{\sl Proof.}]
The first part of the proof is straightforward except for checking that
$\varphi$ is differentiable at zero and $\varphi'$ is continuous at zero. In
order to do it, we note firstly that $\varphi$ is continuous at zero. Indeed,
since $G$ is nondecreasing and $[\sigma']^2$ is continuous in $[0,+\infty)$ we
have
\begin{eqnarray*}
0\leq \lim_{s\to 0^+} \varphi (s) &=& \lim_{s\to 0^+} \displaystyle
\frac{\displaystyle\int_0^s G(t)[\sigma'(t)]^2 dt}{G(s)}
\leq\lim_{s\to 0^+} \int_0^s [\sigma'(t)]^2 dt =0.
\end{eqnarray*}

\noindent Now we observe that, using the L'H\^{o}pital Rule, \rife{contador} and
\rife{pereiro},
\begin{eqnarray*}
\varphi'(0)
&=& h_0^2 - \lim_{s\to 0^+} \frac{h(s)\displaystyle\int_0^s
G(t)\left[\sigma'(t)\right]^2dt}{\beta G(s)}
\\
&=& h_0^2 - h_0^2\lim_{s\to 0^+} \frac{\displaystyle G(s)[\sigma'(s)]^2}{2\beta
\sigma(s)\sigma'(s)G(s) + h(s)[\sigma(s)]^2 G(s)}
\\
&=& h_0^2 - h_0^2 \lim_{s\to 0^+} \frac{\displaystyle 1}{2\beta
\frac{1}{\sqrt{h(s)}} + 1}
= h_0^2 - h_0^2 = 0.
\end{eqnarray*}
\noindent Hence $\varphi$ is differentiable at zero and $\varphi'$ is
continuous at zero.

\noindent 
In order to prove inequality \rife{estima}, we observe that since
$[\sigma'(s)]^{2} = [\sigma(s)]^{2}\,h(s)$, then
$$
\varphi(s) =
\frac{\beta}{G(s)}\,\int_{0}^{s}\,G(t)\,\frac{h(t)}{\beta}\,[\sigma(t)]^{2}\,dt.
$$
Since
$$
G(t)\,\frac{h(t)}{\beta} = \frac{d}{dt}\,G(t),
$$
we can integrate by parts to find (recall that $G(0) = \sigma(0) = 0$)
$$
\begin{array}{r@{\hspace{2pt}}c@{\hspace{2pt}}l}
\dys
\varphi(s)
& = &
\dys
\frac{\beta}{G(s)}\,\left[G(t)\,[\sigma(t)]^{2}\right]_{t = 0}^{t = s}
-
\frac{2\beta}{G(s)}\,\int_{0}^{s}\,G(t)\,\sigma(t)\,\sigma'(t)\,dt
\\
& = &
\dys
\beta\,[\sigma(s)]^{2}
-
\frac{2\beta}{G(s)}\,\int_{0}^{s}\,G(t)\,[\sigma(t)]^{2}\,\sqrt{h(t)}\,dt
\\
& \leq &
\beta\,[\sigma(s)]^{2},
\end{array}
$$
since all the functions in the last integral are nonnegative.
\salta{that
$$
\sup_{s\in (0,+\infty)}\frac{\varphi(s)}{\sigma(s)^2}=\max\left\{ \sup_{s\in
(0,1]}\frac{\varphi(s)}{\sigma(s)^2}, \sup_{s\in
[1,\infty)}\frac{\varphi(s)}{\sigma(s)^2}\right\}.
$$

\noindent Firstly, for $s\geq 1$, since $G$ is increasing and $\sigma(s)=1$ in $[1,+\infty)$,
we have
\begin{eqnarray*}
\frac{\varphi(s)}{\sigma(s)^2}&=&\frac{\displaystyle \int_0^{1} G(t)[\sigma'(t)]^2 dt }{G(s)}
\leq \frac{\displaystyle  \int_0^{1} h(t)G(t)[\sigma(t)]^2dt}{G(1)} \\
&\leq& \displaystyle  \int_0^{1} h(t)G(t)dt=\displaystyle \beta \int_0^{1} \frac{d}{dt}G(t)dt \leq
\beta.
\end{eqnarray*}

\noindent Secondly, we compute the limit as $s$ tends to zero
\begin{eqnarray*}
\lim_{s\to 0^+} \frac{\varphi(s)}{\sigma(s)^2} &=& \lim_{s\to 0^+}
\frac{\displaystyle G(s)[\sigma'(s)]^2}{\frac{h(s)}{\beta} G(s) [\sigma(s)]^2
+2\sigma(s)\sigma'(s)G(s)}
\\
&=&\lim_{s\to 0^+}  \frac{\displaystyle 1}{\frac{1}{\beta} +
\frac2{\sqrt{h(s)}}}=\beta.
\end{eqnarray*}
On the other hand, the set of critical points of
$\frac{\varphi(s)}{\sigma(s)^2}$ in $(0,1]$ is given by
\begin{eqnarray*}
A=\left\{s\in (0,1): \, 0=\left(G(s)\sigma'(s)\sigma(s)\right)^2\right.
\\
\left. -\left(\frac{h(s)}{\beta} G(s)[\sigma(s)]^2 +
2\sigma'(s)\sigma(s)G(s)\right)\int_0^s G(t)[\sigma'(t)]^2 dt\right\}.
\end{eqnarray*}
Thus, the value of $\frac{\varphi(s)}{\sigma(s)^2}$ at every critical point
$s_0\in A$ is
\begin{eqnarray*}
\frac{\varphi(s_0)}{\sigma(s_0)^2}&=& \frac{\displaystyle \int_0^{s_0}
G(t)[\sigma'(t)]^2 dt}{G(s_0)\sigma(s_0)^2}
= \frac{\displaystyle [\sigma'(s_0)]^2}{\displaystyle
\frac{h(s_0)}{\beta}[\sigma(s_0)]^2 + 2\sigma'(s_0)\sigma(s_0)}
=  \frac{1}{\frac1\beta+ \frac2{\sqrt{h(s_0)}}} \leq \beta.
\end{eqnarray*}
which,  in particular, yields \rife{estima}.
}
\end{proof}

\noindent
\begin{proof}[{\sl Proof of Theorem~\ref{teoremah}}]
Let $u\in H_0^1(\Omega)$ be a positive solution for \rife{prob} and 
$\varphi \in C^1([0,+\infty))$ be given by \rife{fih}. Observing that $\varphi (0)=0$, that $\varphi'$ is bounded and that, by \rife{estima} and since $\sigma(s) \leq 1$, we have $\varphi (s)\leq \beta$, we derive that $\varphi(u)\in H_0^1(\Omega )\cap L^\infty (\Omega )$. Therefore,
we
can take $v=\varphi(u)$ as test function in \rife{enerfin} to obtain, by using \rife{hcero},
that
\begin{eqnarray*}
\int_\Omega M (x,u)\nabla u \cdot \nabla u \, \varphi'(u)\  +\int_\Omega
h(u)|\nabla u|^2\varphi(u) \ \leq \int_\Omega f\, \varphi(u)\ .
\end{eqnarray*}
Thus, adding and subtracting $\displaystyle \frac{1}{\beta}\int_\Omega
M(x,u)\nabla u\cdot \nabla u  \, h(u)\varphi(u)\ $, we derive from \rife{coe}
and \rife{rh} that
\begin{eqnarray*}
\int_\Omega M(x,u)\nabla u\cdot \nabla u [\sigma'(u)]^2 \ &\leq& \int_\Omega
M(x,u)\nabla u\cdot \nabla u
\left[\varphi'(u)+\frac{h(u)}{\beta}\varphi(u)\right]\
\\
&&+ \int _\Omega \left[I-\frac{M\left(x,u\right)}{\beta}\right] \nabla u\cdot
\nabla u \, h(u)\varphi(u)\
\\
&\leq& \int_\Omega f\, \varphi(u)\ .
\end{eqnarray*}
Using now  \rife{coe}, \rife{estima} and the fact that $f \geq 0$, we have
\begin{equation}\label{susigma}
\alpha \int _\Omega |\nabla \sigma(u)|^2 \ = \alpha \int _\Omega |\nabla u|^2
[\sigma'(u)]^2\  \leq \int_\Omega f\, \varphi(u)\  \leq \beta \int_\Omega
f\,[\sigma(u)]^2\ .
\end{equation}
Hence, recalling (see \cite{def}) that, since $f$ belongs to
$L^q(\Omega)$ with $q>\frac N2$, and $f^+\not\equiv 0$, the first
positive eigenvalue $\lambda_1(f)$ of the eigenvalue boundary value
problem
\begin{equation*}
\begin{cases}
-\Delta u = \lambda\, f\, u & \mbox{in }\Omega,
\\
\hfill u=0 \hfill &\mbox{on } \partial \Omega,
\end{cases}
\end{equation*}
is such that
$$
\lambda_1(f)\,\int_{\Omega}\,f\,v^{2} \leq \int_\Omega\,|\nabla
v|^2, \quad \forall v \in H^{1}_{0}(\Omega),
$$
we deduce from \rife{susigma} that
$$
\alpha\, \int _\Omega |\nabla \sigma(u)|^2 \ \leq
\frac{\beta}{\lambda_{1}(f)}\, \int _\Omega |\nabla \sigma(u)|^2.
$$
Recalling the assumption  $\displaystyle \frac{\beta}{\alpha} <
\lambda_1(f)$, this implies that
$$
\int _\Omega |\nabla \sigma(u)|^2\  = 0,
$$
which yields
$$
\sigma (u)=0 \quad \mbox{for a.e. } x\in \Omega .
$$
Therefore, recalling that $\sigma(s) = 0$ if and only if $s = 0$, we have $u\equiv 0$, contradicting $u>0$ in $\Omega$: therefore,
there are no positive solutions of
\rife{prob}.
\salta{ If we suppose that \rife{infinito} is satisfied then,
{\color{red} the  above proof works to show the nonexistence of positive
solutions  $u\in H_0^1(\Omega)$ (without to impose that  $u \in
L^\infty(\Omega)$). Indeed, we have to point out that \rife{infinito} implies
that $\varphi(u)\in H_0^1(\Omega)\cap L^\infty(\Omega)$ and $\sigma(u)\in
H_0^1(\Omega)$, for every positive  $u\in H_0^1(\Omega)$. This is a direct
consequence of the the boundedness of $\varphi(s)$, $\varphi' (s)$ and
$\sigma(s)$ observed above.}}
\end{proof}

\begin{remark}\rm
Theorem~\ref{teoremah} can be extended to  more general operators. Specifically, if
$a(x,s, \varsigma)$ is a Carath\'{e}odory function such that
$$
\exists \alpha >0 \, : \, a(x, s,\varsigma )\cdot \varsigma \geq \alpha |\varsigma|^2\quad \mbox{for a.e. } x\in \Omega,
\ \forall s\in \re, \ \forall \varsigma \in \rn ,
$$
$$
\exists \beta >0 \, : \, |a(x, s,\varsigma )|\leq \beta |\varsigma| \quad \mbox{for a.e. } x\in \Omega,
\  \forall s\in \re, \  \forall \varsigma \in \rn ,
$$
and $\displaystyle 0\leq f\in L^q(\Omega )$ with $q>\frac{N}{2}$ and $f\not\equiv 0 $, then problem
\begin{equation*}
                    \label{probgen}
\begin{cases}
\displaystyle -\mbox{div\,}(a(x,u ,\D u)) + g(x,u) |\nabla u|^2 =
f & \mbox{in } \Omega, \\
\hfill u=0\hfill &\mbox{on } \partial \Omega,
\end{cases}
\end{equation*}
has no  finite energy solutions  provided
$\lambda_1(f)>\displaystyle \frac{\beta}{\alpha}$ and conditions
\rife{hcero}, \rife{contador} and
\rife{pereiro} hold.
\end{remark}

\begin{remark}
                           \label{rmka}
\rm
Let $0\leq f\in L^q (\Omega )$ with $q>\frac{N}{2}$ and $f\not\equiv 0$.
Assume \rife{coe} and  that $g(s)$ satisfies \rife{hcero}.
Observe that if $u\in H_0^1(\Omega)$ is a solution of \rife{prob}, and $R > 0$, then
$v = R u$ is a solution of
\begin{equation*}
\begin{cases}
\displaystyle -\mbox{div\,}\left( M\left(x,\frac vR\right)\nabla
v\right) + \frac1R g\left( x, \frac vR \right) |\nabla v|^2 = R\, f &
\mbox{in } \Omega,
\\
\hfill v=0\hfill & \mbox{on }\partial \Omega,
\end{cases}
\end{equation*}
with
$$
\frac{g\left( x,\displaystyle \frac{s}{R} \right)}{R} { \geq}
h_R(s)\defin \frac1R h\left(\frac sR\right).
$$
Therefore, by Theorem~\ref{teoremah}, and since $\lambda_{1}(Rf) = \lambda_{1}(f)/R$, if $h_R(s)$
satisfies conditions \rife{contador} and \rife{pereiro}, then a
necessary condition for the existence of finite energy  solutions of \rife{prob} is that $\lambda_1(f) \leq R\beta/\alpha$.
\end{remark}

In the following result, as a consequence of Theorem~\ref{teoremah} (and Remark~\ref{rmka}),
we give conditions to assure the nonexistence of
solutions of \rife{prob} for every datum $f$.

\begin{corollary} \label{teoremah2}\sl
Let $0\leq f\in L^q (\Omega )$ with $q>\frac{N}{2}$ and $f\not\equiv 0$. Assume \rife{coe} and  that $g(s)$ satisfies \rife{hcero}.
If there exists  $R_0>0$ such that the function
$h_R(s)=\displaystyle\frac1R h\left(\frac sR\right)$ satisfies
\rife{contador} and \rife{pereiro} for every $R\in (0,R_0)$, then \rife{prob} does not
have any finite energy  solution. \qed
\end{corollary}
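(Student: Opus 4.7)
The plan is to apply Theorem~\ref{teoremah} to a suitable rescaling of problem \rife{prob}, exactly along the lines foreshadowed in Remark~\ref{rmka}. The argument will be by contradiction: assuming that $u \in \huz$ is a finite energy solution of \rife{prob}, and picking $R \in (0, R_{0})$ to be chosen later, I set $v = R u$. A direct computation shows that $v \in \huz$ is a finite energy solution of
\begin{equation*}
\begin{cases}
\displaystyle -\mbox{div\,}\left( \widetilde M(x,v) \nabla v \right) + \widetilde g(x,v) |\nabla v|^{2} = R f & \mbox{in } \Omega, \\
\hfill v = 0 \hfill & \mbox{on } \partial\Omega,
\end{cases}
\end{equation*}
where $\widetilde M(x,s) := M(x,s/R)$ satisfies \rife{coe} with the same constants $\alpha, \beta$, and $\widetilde g(x,s) := g(x,s/R)/R$ is bounded below by $h_{R}(s) = h(s/R)/R$ thanks to \rife{hcero}.

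By hypothesis, for every $R \in (0,R_{0})$ the function $h_{R}$ satisfies both \rife{contador} and \rife{pereiro}, so all structural assumptions of Theorem~\ref{teoremah} are inherited by the rescaled problem. The remaining ingredient is the eigenvalue condition $\lambda_{1}(\text{weight}) > \beta/\alpha$. Using the standard scaling $\lambda_{1}(Rf) = \lambda_{1}(f)/R$ for the first positive eigenvalue of $-\Delta$ with weight, it suffices to fix
\[
R \in \left( 0,\, \min\left\{ R_{0},\ \tfrac{\alpha \lambda_{1}(f)}{\beta} \right\} \right),
\]
which is nonempty because $\lambda_{1}(f) > 0$ (here one uses $f \geq 0$, $f\not\equiv 0$ and $f \in L^{q}(\Omega)$ with $q > N/2$, as in the hypotheses of Theorem~\ref{teoremah}). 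For such $R$ one has $\lambda_{1}(Rf) > \beta/\alpha$, so Theorem~\ref{teoremah} forces the nonexistence of the rescaled solution $v$, and hence of $u$, yielding the sought contradiction.

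There is no real obstacle here beyond the bookkeeping: one must verify that the rescaled coefficients inherit \rife{coe} and \rife{hcero} with the same ellipticity constants, that $\lambda_{1}(\cdot)$ scales as claimed, and that the hypotheses \rife{contador}–\rife{pereiro} on $h_{R}$ are genuinely those assumed in the statement. All three points are elementary, so the corollary will be obtained in essentially one line once the rescaling in Remark~\ref{rmka} has been set up.
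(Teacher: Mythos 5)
Your proof is correct and is exactly the paper's intended argument: the paper marks the corollary with \qed precisely because Remark~\ref{rmka} already sets up the rescaling $v = Ru$, the lower bound $\widetilde g(x,s)\geq h_R(s)$, and the scaling $\lambda_1(Rf)=\lambda_1(f)/R$, so that choosing $R<\min\{R_0,\alpha\lambda_1(f)/\beta\}$ and invoking Theorem~\ref{teoremah} is all that remains. No discrepancies to report.
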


As a consequence of the above results  we also have the following.

\begin{corollary} \label{casooptimo}\sl
Let $0\leq f\in \elle{q}$ with  $q>\frac{N}{2}$ and $f\not\equiv 0$. Suppose that \rife{coe} holds and that for some constants $s_0, \Lambda >0$ and $\gamma \geq 2$ we have
\begin{equation*}    
\frac{\Lambda}{s^\gamma} \leq g(x,s), \quad \mbox{ for a.e. } x\in
\Omega , \  \forall s\in(0,s_0].
\end{equation*}
If either
\begin{enumerate}
\item[(i)]  $\gamma >2$,
\end{enumerate}
or
\begin{enumerate}
\item[(ii)] $\gamma =2$ and $\lambda_1(f) >\frac{\beta}{\Lambda \alpha}$,
\end{enumerate}
then  \rife{prob} does not have any finite energy solution.
\end{corollary}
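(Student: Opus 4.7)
The plan is to invoke the nonexistence machinery already developed in this section, namely Theorem~\ref{teoremah} (together with the scaling of Remark~\ref{rmka}) and Corollary~\ref{teoremah2}, with a judicious choice of the comparison function~$h$. Since conditions \rife{contador} and \rife{pereiro} only concern the behavior near $s=0$, I fix a continuous function $h:(0,+\infty)\to[0,+\infty)$ with $h(s)=\Lambda/s^\gamma$ on some interval $(0,s_1]$ with $s_1\leq s_0$, and continuously joined to $0$ for $s\geq 1$; the pointwise lower bound on $g$ in the hypothesis then yields \rife{hcero}. Condition \rife{contador} is immediate from $\gamma\geq 2$: indeed $h(s)\to+\infty$ as $s\to 0^+$, and $\sqrt{h(s)}=\sqrt{\Lambda}\,s^{-\gamma/2}$ is not integrable at zero.

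For case (i), $\gamma>2$, I would verify \rife{pereiro} uniformly for the whole family of rescalings $h_R(s)=R^{-1}h(s/R)=\Lambda R^{\gamma-1}/s^\gamma$ near zero. Since $\int_s^1 \sqrt{h_R(t)}\,dt$ blows up like $s^{-(\gamma/2-1)}$ with $\gamma/2-1>0$, the exponential ${\rm e}^{\int_1^s\sqrt{h_R(t)}\,dt}$ decays super-polynomially and dominates the polynomial divergence of $\sqrt{h_R(s)}$; consequently $h_0=0$ for every $R>0$. Corollary~\ref{teoremah2} then yields nonexistence for every admissible datum~$f$, with no restriction on $\lambda_1(f)$.

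For case (ii), $\gamma=2$, the rescaling analysis is sharper. Here $\sqrt{h_R(s)}=\sqrt{\Lambda R}/s$ and $\int_s^1\sqrt{h_R(t)}\,dt=-\sqrt{\Lambda R}\,\ln s$, so
\[
\sqrt{h_R(s)}\,{\rm e}^{\int_1^s \sqrt{h_R(t)}\,dt} = \sqrt{\Lambda R}\,s^{\sqrt{\Lambda R}-1},
\]
which has a finite nonnegative limit as $s\to 0^+$ exactly when $\Lambda R\geq 1$. I would therefore pick the minimal admissible rescaling $R=1/\Lambda$, so that $h_R$ satisfies both \rife{contador} and \rife{pereiro} (with $h_0=1$). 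Invoking Remark~\ref{rmka} (i.e., Theorem~\ref{teoremah} applied to the rescaled problem for $v=Ru$, and using that $\lambda_1(Rf)=\lambda_1(f)/R$), any finite energy solution would force $\lambda_1(f)\leq R\beta/\alpha=\beta/(\Lambda\alpha)$, contradicting hypothesis~(ii).

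The main subtlety is the borderline case $\gamma=2$: the whole argument hinges on identifying the unique rescaling $R=1/\Lambda$ that just meets the Keller--Osserman-type threshold encoded by \rife{pereiro}, and on observing that this particular value produces exactly the eigenvalue bound $\beta/(\Lambda\alpha)$ that appears in the statement; any smaller $R$ makes $h_R$ fail \rife{pereiro}, while any larger $R$ weakens the conclusion.
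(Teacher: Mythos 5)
Your proposal is correct and follows essentially the same route as the paper: truncate $\Lambda/s^\gamma$ to a continuous comparison function $h$ vanishing for large $s$, check \rife{contador} from $\gamma\geq 2$, then for $\gamma>2$ apply Corollary~\ref{teoremah2} (every rescaling $h_R$ satisfies \rife{pereiro} with $h_0=0$), while for $\gamma=2$ the computation $\sqrt{h_R(s)}\,{\rm e}^{\int_1^s\sqrt{h_R(t)}\,dt}=C\,s^{\sqrt{\Lambda R}-1}$ identifies $R=1/\Lambda$ as the threshold and Remark~\ref{rmka} yields exactly the bound $\lambda_1(f)\leq\beta/(\Lambda\alpha)$. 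One small construction detail to fix: you join $h$ to zero only for $s\geq 1$, but since the hypothesis gives $g(x,s)\geq\Lambda/s^\gamma$ only on $(0,s_0]$ and merely $g\geq 0$ beyond, the comparison function must vanish already for $s\geq s_0$ (as in the paper, which takes $h=0$ on $[s_0,+\infty)$ and $h\leq\Lambda/s^\gamma$ on $(s_0/2,s_0)$), otherwise \rife{hcero} can fail when $s_0<1$.
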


\begin{proof}[{\sl Proof.}]
   Consider a continuous function $h(s)$ such that
$$
  h(s)
         = \left\{
      \begin{array}{ll}
                             \dys \frac{\Lambda}{s^\gamma} & \mbox{if } 0< s \leq \dys \frac{s_0}{2} ,
                             \\
                             \dys \leq \frac{\Lambda}{s^\gamma} & \mbox{if } \dys\frac{s_0}{2}< s < s_0 ,
                             \\
                             0 &\mbox{if } s_0\leq s .
         \end{array}
                        \right.
  $$
Observing that $h_R(s)=\displaystyle \frac{\Lambda R^{\gamma-1}}{s^\gamma}$ for every $s\in (0,\frac{s_0}{2})$,  and using that $\gamma \geq 2$, we have that $h_R(s)$ is not integrable in $(0,\frac{s_0}{2})$, i.e., it satisfies \rife{contador}.

In addition, if $\gamma>2$, then $h_{R}(s)$ satisfies \rife{pereiro} for every $R>0$, so that Corollary~\ref{teoremah2} concludes the proof in this case.

On the other hand, if we assume that $\gamma =2$, then
$$
\sqrt{h_R(s)}\  {\rm e}^{\displaystyle \int_1^s \sqrt{h_R(t)}dt} =
\frac{\sqrt{\Lambda R}}{s}\  {\rm e}^{\displaystyle \int_{s_0/2}^s
\frac{\sqrt{\Lambda R}}{s} dt + \int_1^{s_0/2} \sqrt{h_R(t)}dt} = C
s^{\sqrt{\Lambda R}-1},
$$
for some $C>0$. Thus,  $h_R(s)$ satisfies  \rife{pereiro}  if and
only if
$$
\lim_{s\to 0^+}   s^{\sqrt{\Lambda R}-1} \geq 0,
$$
i.e.,  $R\geq \frac{1}{\Lambda}$.
Therefore, Remark~\ref{rmka} implies the nonexistence of solutions provided that
$\lambda_1(f) >\frac{\beta}{\Lambda \alpha}$.
\end{proof}

As a consequence of this result, we have that the first part of Theorem \ref{modelt} is proved. We are now going to prove the second part of it.

\begin{proof}[{\sl Proof of Theorem \ref{modelt}}]
We first note that if  $\gamma < 2$, then Theorem \ref{th1}
guarantees the existence of a solution. Conversely, if $\gamma >
2$ or if  $\gamma = 2$ and $\|f\|_{\lio}$ is large enough,
Theorem \ref{teoremah} applies and no solutions exist for
\rife{model}.

On the other hand, if $f\in L^\infty(\Omega )$ and \eqref{ipnnex}
holds,
we recall that  existence and uniqueness of a solution
$u_{n}$ in $\huz\cap C(\overline{\Omega} )$ for
\be \label{EQ}
\begin{cases}
\dys -\Delta u_n +\frac{|\D u_n |^2}{ \left(u_n + \frac{1}{n}\right)^{\gamma} } = f & \mbox{in } \Omega, \\
\hfill  u_n = 0 \hfill & \mbox{on } \partial \Omega,
\end{cases}
\ee (with $\gamma \geq 2$)    follows by the results of \cite{BMP}
(existence) and \cite{AS} (uniqueness). Taking $u_{n}$,
$G_{k}(u_{n})$, and $T_{\eps}(u_{n})/\eps$ as test functions and
working as in Lemma~\ref{lemaprop} (1.), it is easy to see that
$u_{n}$ is bounded in $\huz$ and in $L^\infty (\Omega )$, and that
there exists $C>0$ (independent on $n$) satisfying
$$
\int_{\Omega}\,\frac{|\D u_n |^2}{ \left(u_n + \frac{1}{n}\right)^{\gamma} } \
\leq C.
$$
Therefore, up to subsequences, there exists a nonnegative bounded
Radon measure ${\nu}$ such that
$$
\frac{|\D u_n |^2}{ \left(u_n + \frac{1}{n}\right)^{\gamma} }
\mbox{ converges to } {\nu} \mbox{ in the weak-$\ast$
topology of measures.}
$$
Since $u_{n}$ is bounded in $\huz$ then it converges, up to
subsequences, to some function $u$ weakly in $\huz$, strongly in
$\elle2$, and almost everywhere in $\Omega$. Moreover, since $f -
\frac{|\D u_n |^2}{ \left(u_n + \frac{1}{n}\right)^{\gamma} }$ is
bounded in $\elle1$, the result of \cite{BM} yields that (up again
to subsequences) $\D u_n$ converges to $\D u $ almost everywhere in
$\Omega$. Then we have, by Fatou lemma, that $\frac{|\D u |^2}{
u^{\gamma} }\chi_{\{u>0\}}$ belongs to $L^1 (\Omega)$, and that
$$
\nu =  \frac{|\D u |^2}{   u^{\gamma} }\chi_{\{u>0\}}
+{\nu_0},
$$
where ${\nu_0}$ is a nonnegative bounded Radon measure on $\Omega$.
Therefore, $u\in\huz$ is a finite energy solution of
$$
\begin{cases}
\dys -\Delta u +\frac{|\D u |^2}{ u^{\gamma} }\chi_{\{u>0\}} = f - {\nu_0} & \mbox{in $\Omega$,} \\
\hfill u = 0 \hfill & \mbox{on $\partial\Omega$.}
\end{cases}
$$
Note also that since $u_{n+1}$ is a subsolution for \rife{EQ}, we
can apply the comparison principle of \cite{AS} so
that, for  every $x \in \Omega$, we have
\begin{equation*}
u_{n}(x) \geq u_{n+1}(x) \geq \ldots \geq u(x),
\end{equation*}
 and thus we can assume that $u_n(x)$ is converging to $u(x)$ for
 every $x\in\Omega$. We claim that $u\equiv 0$, so that $u_{n}$ converges to zero in $\elle2$.
 Indeed,
we divide the proof of this assertion in two steps:

{\bf Step 1.} The case in which $\Omega$ is a ball of radius $R>0$, $\Omega = B_R$, and
$f = T>0$ is a constant.

{\bf Step 2.} The general case.

\noindent {\bf Step 1.} Assume that $\Omega = B_R$ and $f = T>0$ is
a constant. In this case, \rife{ipnnex} means that, if $\gamma =2$,
then the first eigenvalue  $\la_{1}^{B_{R}}(T)$ of the Laplacian
operator with weight $T$ in $B_R$ is greater than one, i.e.,
$\la_{1}^{B_{R}}(T)> 1$.
We first observe that $u$ is radially
symmetric (and thus continuous for $|x|\not= 0$). Indeed, if we
define
$$
\psi_{n}(s)=\int_0^s e^{-H_{n}(t)} d t,\quad \mbox{where}\quad
H_{n}(t)=
\frac{n^{\gamma-1}}{\gamma-1}\left[ 1 - (1+nt)^{1-\gamma}\right]
$$ and we set $ v_n=
\psi_{n}(u_n)$, it is easy to check that $v_n$ is the unique
solution of
$$
\begin{cases}
-\Delta v_n =T {\rm e}^{-H_{n}(\psi_{n}^{-1} (v_n))} \quad & \mbox{in }\, B_R\\
\hfill v_n=0 \hfill & \mbox{on }\,\partial B_R\,.
\end{cases}
$$
Since the nonlinearity $0\leq {\rm e}^{-H_{n}(\psi_{n}^{-1} (s))}$
is $C^1$, we can apply the result of  Gidas, Ni and Nirenberg (see
\cite{gnn}) in order to deduce  that $v_n$ is radially symmetric
(hence $v_{n} = v_{n}(r)$), monotone decreasing with respect to $r$
and such that $v_n'(0)=0$. Since $\psi_{n}$ and $H_n$ are smooth and
increasing, the functions $u_n$ have the same properties as $v_n$.
Passing to the limit with respect to $n$ we deduce that $u$ is
radially symmetric and monotone nonincreasing.

We argue by contradiction assuming that $u$ is not identically zero.
In this case, using that $u(r)$ is nonincreasing in $(0,R)$,
$$
r_{1} = \inf \{0 < r \leq R : u(r) = 0\} > 0,
$$
and then
$$
u \geq c_{\eps}:= u(r_1-\eps)   \quad \mbox{in } B_{r_1 - \eps}\,.
$$
Therefore, repeating the proof of Theorem \ref{teorema}, we prove that
$$
\lim_{n \to +\infty}\,\frac{|\D u_n |^2}{ \left(u_n + \frac{1}{n}\right)^{\gamma} } = \frac{|\nabla u|^{2}}{u^{\gamma}} \quad \mbox{strongly in $L^{1}_{{\rm loc}}(B_{r_{1}})$,}
$$
so that $\nu_{0}$ is zero on $B_{r_{1}}$ and, by  the continuity of
$u$ for $r\not= 0$,  $u$ is a solution of
$$
\begin{cases}
\dys -\Delta u  +\frac{|\D u  |^2}{  u^{\gamma} } = T & \mbox{in } B_{r_1}, \\
\hfill  u  = 0 \hfill & \mbox{on } \partial B_{r_1},
\end{cases}
$$
and this contradicts the result of Theorem \ref{teoremah}  (note
that, if $\gamma = 2$, we have $\la_{1}^{B_{r_{1}}}(T) >
\la_{1}^{B_{R}}(T) > 1$). Therefore
$u \equiv 0$.

\smallskip

\noindent {\bf Step 2.} $\Omega$ is an open set and $f$ is
nonnegative and belongs to $\lio$.

By \rife{ipnnex}, we can fix $R > \mbox{diam}\, \Omega$ with
$\lambda_1> \|f\|_{L^\infty(\Omega)} R^2$ provided that $\gamma=2$.
Let $v_n$ be also the solution of
\begin{equation*}
\begin{cases}
\dys -\Delta v_n +\frac{|\D v_n |^2}{\left(v_{n} + \frac{1}{n}\right)^{\gamma} } = \|f\|_{\lio} & \mbox{in } B_R \\
\hfill v_n =0 \hfill &\mbox{on } \partial B_R.
\end{cases}
\end{equation*}

By definition of $\mbox{diam}\, \Omega$,  we have $\overline{\Omega}
\subset B_R$.
Our aim is to prove that
${v}_{n}$ is a supersolution for \rife{EQ}. Indeed, let $ 0\leq\psi
\in  C^{\infty}_{0} (\Omega)$ and we use it as test function in the
formulation of $v_n$. Thus
$$
\dys \int_{B_R} \D v_n \cdot \D \psi +\int_{B_R} \frac{|\D v_n|^2}{(\frac1n + u_n)^{\gamma} } \psi =\int_{B_R} \|f\|_{L^{\infty} (B_R)} \psi\,,
$$
and since the support of $\psi $ is contained in $\Omega$ we deduce
$$
\dys \intO \D v_n \cdot \D \psi +
\intO \frac{|\D v_n|^2}{(\frac1n + u_n)^{\gamma} } \psi
=\intO \|f\|_{L^{\infty} (B_R)} \psi \geq \intO f \psi
$$
for every nonnegative $\psi$ in $H^1_0 (\Omega)\cap L^{\infty}
(\Omega)$ (by an easy density argument). Using again the comparison
principle of \cite{AS}, $ {u}_{n}\leq v_n$ in $\Omega$. Now,
observing that by  the choice of $R$, if $\gamma =2$, we have
\begin{equation}
\label{ddd} \la_{1}^{B_{R}}(\|f\|_{\elle{\infty}})=
\frac{\lambda_{1}}{R^{2}\|f\|_{\elle{\infty}}}> 1,
\end{equation}
we are able to apply the previous Step 1, so that
$v_n$ tends to~0 strongly in $L^{2}(B_R)$, which implies that $
{u}_{n}$ tends to zero in $L^{2}(\Omega)$ and the claim has been
proved.

\medskip
Finally, we conclude the proof by taking $u_{n}$ as test function in
\rife{EQ} and dropping the nonnegative quadratic term to deduce that
the convergence to zero is strong in $\huz$; using this fact in the
weak formulation of \rife{EQ} then yields that $\nu = f$, as
desired.

\end{proof}

\begin{remark}\rm
Let us emphasize that the condition $\|f\|_{\elle{\infty}} \leq
\frac{\lambda_1}{(\mbox{diam\,}\Omega)^2}$ imposed in assumption
\rife{ipnnex} for the case $\ga=2$ is not optimal. We use it for the
sake of simplicity. However,  as shown in the proof of
Theorem~\ref{modelt} (see \eqref{ddd}), a sharper condition can be
used in this case.

More precisely, if  we consider the Chebyshev radius $R(\Omega)$ of $\overline{\Omega}$, i.e.
the greatest lower bound of the radii of all balls containing $\overline{\Omega}$, then the the result of Theorem \ref{modelt} with $\ga=2$ holds  provided that
$$
\|f\|_{\elle{\infty}} < \frac{\lambda_{1}}{R(\Omega)^{2}}.
$$
\end{remark}

\appendix
\setcounter{equation}{0}
\section{Local a priori estimates and large solutions}

We devote this appendix to recall some results concerning the following
equation
\begin{equation}\label{ls}
-\div (a(x,u ,\D u)) +B(x,u) =F (x, \D u), \quad x\in \Omega ,
\end{equation}
\noindent where $F(x,\varsigma)$ and $a(x,s, \varsigma), B(x,s)$
are Carath\'eodory functions. Suppose that there exist constants $\beta\geq
\alpha >0$ such that
\begin{equation}\label{a1}
a(x, s,\varsigma )\cdot \varsigma \geq \alpha |\varsigma|^2,
\end{equation}
\begin{equation}\label{a2}
|a(x, s,\varsigma )|\leq \beta |\varsigma|,
\end{equation}
\begin{equation}\label{a3}
(a(x, s,\varsigma )-a(x, s,\eta ))\cdot(\varsigma -\eta)>0 ,
\end{equation}
for a.e. $x\in \Omega$, for every $s\in \re$ and for every $ \varsigma$, $\eta \in
\rn$,   $\varsigma \neq \eta$.

We suppose that  $F(x,\varsigma)$ satisfies
\begin{equation}\label{effe}
|F(x,\varsigma)|\leq F_0 (x) + p_0 |\varsigma|, \, \mbox{a.e. }x\in
\Omega, \, \forall \varsigma\in \mathbb{R}^N,
\end{equation}
where $F_0$  belongs to $L_{\mbox{\tiny loc}}^q (\Omega )$ with
$q>\frac N2$ and $p_0>0$. We  also suppose that  there exists a
continuous nonnegative function $b:[0,+\infty) \to [0,+\infty)$ such
that
\begin{equation}\label{g1}
\begin{array}{l}
b(s) \mbox{ is increasing and satisfies the Keller-Osserman condition \rife{k-o}, } \\
\dys b(s)/s \mbox{ is
nondecreasing for large } s,
\\ \mbox{and for every }\omega \subset \subset \Omega   \mbox{ there exists
$m_{\omega}>0$ such that }
\\
B(x,s)\geq m_{\omega} b(s)\geq 0, \mbox{for a.e. } x\in \omega,
\mbox{ for every } s\in \re^+.
\end{array}
\end{equation}
Then the subsolutions of equation \rife{ls} are uniformly bounded from above in $\omega\subset \subset \Omega$. This result is essentially contained in \cite{LEO}.
\begin{theorem}    \label{thleoni}\sl
Suppose that $a(x,s,\varsigma)$ satisfies \rife{a1}--\rife{a3},  $B(x,s)$ satisfies
\rife{g1} and assume that $F_0 \in L_{\mbox{\tiny loc}}^q (\Omega )$,
$q>\frac{N}{2}$.  Then, for every $\omega\subset \subset \Omega$ there
exists $C_\omega >0 $ such that  any  distributional subsolution $u\in H_{\mbox{\tiny loc}}^1(\Omega)$ of (\ref{ls})
 such that $u^+\in L^\infty_{\rm loc} (\Omega)$ and $B(x,u^+)\in
L^1_{\rm loc} (\Omega)$ satisfies
$$
u(x)\leq C_{\omega},\quad \forall x\in \omega\,.
$$
\end{theorem}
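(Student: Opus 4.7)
The plan is to prove the bound pointwise by constructing, for each $x_0 \in \omega$, an explicit radial supersolution $W$ on a small ball $B_R(x_0) \subset \Omega$ that blows up on $\partial B_R(x_0)$, and then applying a local comparison principle to deduce $u^{+}(x_{0}) \leq W(x_{0})$. A finite covering of $\omega$ by such balls with $R = \tfrac{1}{2}\mathrm{dist}(\omega,\partial\Omega)$ then yields a uniform constant $C_\omega$ depending only on $R$, the structural constants $\alpha, \beta, p_{0}$, the coefficient $m_\omega$, the Keller--Osserman integral of $b$, and $\|F_{0}\|_{L^{q}}$ on a neighborhood of $\omega$.

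To construct the barrier I would seek $W(x) = \phi(R - |x - x_{0}|)$ with $\phi \in C^{2}((0,R])$ nonnegative, decreasing in $\rho = R - |x - x_{0}|$, and satisfying $\phi(0^{+}) = +\infty$. Using \rife{a1}--\rife{a2} to bound $-\div(a(x,W,\nabla W))$ from below along the radial profile and \rife{effe} to bound $F(x,\nabla W)$ from above, the supersolution inequality
$$
-\div(a(x, W, \nabla W)) + B(x, W) \geq F(x, \nabla W)
$$
reduces, via \rife{g1}, to the ordinary differential inequality
$$
\beta\, \phi''(\rho) + \Bigl(\tfrac{(N-1)\beta}{R - \rho} + p_{0}\Bigr)|\phi'(\rho)| + \overline{F}_{0} \leq m_\omega\, b(\phi(\rho)),
$$
on $(0,R)$, where $\overline{F}_{0}$ is either the local $L^{\infty}$-norm of $F_{0}$ (obtained by a preliminary De~Giorgi--Stampacchia reduction on the level sets $\{u > k\}$, which exploits $q > N/2$ together with the sign of $B$) or a suitable $L^{q}$ perturbation. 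The Keller--Osserman condition \rife{k-o} is precisely the integrability criterion that makes the ansatz $\phi'(\rho) = -\sqrt{(2/\beta)\int_{0}^{\phi(\rho)} b(t)\,dt}$ integrate to a function blowing up in the finite distance $R$; the hypothesis that $b(s)/s$ is nondecreasing for large $s$ then guarantees that the lower-order terms $(N-1)\beta|\phi'|/(R-\rho)$, $p_{0}|\phi'|$, and $\overline{F}_{0}$ are all dominated by $\tfrac{1}{2}m_\omega b(\phi)$ as $\phi \to +\infty$, after a suitable vertical translation of $\phi$.

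With $W$ so constructed, the comparison $u \leq W$ in $B_{R}(x_{0})$ follows from a weak maximum principle. Since $u^{+} \in L^{\infty}_{\mathrm{loc}}$ and $W$ is bounded on compact subsets of $B_{R}(x_{0})$ but tends to $+\infty$ at $\partial B_{R}(x_{0})$, for every $k > 0$ the truncation $(u - W + k)^{+}$ vanishes near $\partial B_{R}(x_{0})$ and is an admissible test function. Testing the difference of the inequalities satisfied by $u$ and $W$ against it, using the strict monotonicity \rife{a3} of $a(x,s,\cdot)$ to handle the principal term and the sign of $B$ together with \rife{effe} to absorb the lower-order terms, one obtains $(u - W + k)^{+} \equiv 0$; letting $k \to 0^{+}$ gives $u \leq W$ in $B_{R}(x_{0})$, and evaluating at $x_{0}$ yields the desired bound $u(x_{0}) \leq \phi(R)$.

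The main obstacle is the construction of the barrier and, in particular, the absorption of the gradient term $p_{0}|\nabla u|$ arising from \rife{effe}. This is precisely the role of the monotonicity of $b(s)/s$ in \rife{g1}: it forces $b(\phi)$ to grow superlinearly compared with $\phi$, and hence with $|\phi'| \sim \sqrt{b(\phi)\phi}$, so that $p_{0}|\phi'|$ is truly of lower order and can be absorbed into $m_\omega b(\phi)$ for $\phi$ large. A secondary technical point is the rigorous justification of the weak comparison when $W$ is infinite on $\partial B_{R}(x_{0})$, which requires a careful truncation procedure and crucial use of the assumption $u^{+} \in L^{\infty}_{\mathrm{loc}}$ to ensure that all integrals are finite throughout the argument.
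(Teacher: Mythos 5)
Your strategy is the classical Keller--Osserman barrier-plus-comparison argument, which is not the route the paper takes, and in the generality of Theorem~\ref{thleoni} it breaks down at two points. First, the barrier construction: for a general Carath\'eodory field $a(x,s,\varsigma)$ satisfying only \rife{a1}--\rife{a3}, a radial profile $W(x)=\phi(R-|x-x_0|)$ cannot be verified to be a weak supersolution by an ODE for $\phi$. The inequality ``$-\div(a(x,W,\nabla W))\geq \beta\phi''+\dots$'' has no pointwise meaning here: in the weak formulation you must estimate $\int_\Omega a(x,W,\nabla W)\cdot\nabla\psi$ from below for arbitrary $0\leq\psi\in C^\infty_c$, and the structure conditions only control the diagonal pairing $a\cdot\nabla W$ and the modulus $|a|$; integrating by parts back onto $W$ would require differentiability of $a$ in $x$, which is not assumed. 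This step is fine for $-\Delta$ or the $p$-Laplacian, but not for merely measurable, $s$-dependent coefficients. Second, the comparison step: testing the difference of the two inequalities with $(u-W+k)^+$ produces the term $\int\bigl(a(x,u,\nabla u)-a(x,W,\nabla W)\bigr)\cdot\nabla(u-W+k)^+$, and \rife{a3} gives monotonicity in $\varsigma$ only at \emph{equal} $s$; the cross term $a(x,u,\nabla W)-a(x,W,\nabla W)$ is uncontrolled without Lipschitz continuity of $a$ in $s$. Moreover $B(x,\cdot)$ is not assumed monotone (only bounded below by $m_\omega b(s)$), so $B(x,u)-B(x,W)$ has no sign on $\{u>W-k\}$, and the difference of the gradient terms is of size $p_0(|\nabla u|+|\nabla W|)$, which cannot be absorbed by a naive test. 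Comparison principles for such operators are genuinely unavailable, which is presumably why the paper avoids them entirely.

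The paper's proof is instead a De~Giorgi--Stampacchia level-set iteration that never invokes a supersolution or a comparison principle. It tests the single inequality satisfied by $u$ with $v=\frac{1}{2\sigma}(e^{2\sigma G_k(u^+)}-1)\xi^2$, where the exponential absorbs the first-order term $p_0|\nabla u|$ (your main worry) via the choice $\sigma>2p_0/\alpha$, and where $\xi^2=\varphi(\eta)$ is a cut-off built from Lemma~\ref{leoni}: that lemma encodes the Keller--Osserman condition in the inequality $t^{2}\varphi'(\tau)^2/\varphi(\tau)\leq \frac1C t\,b(t)\varphi(\tau)+\Gamma$, which lets the absorption term $m_{\omega'}(f)\,b(u^+)\varphi(\eta)$ dominate the cut-off error $|\nabla\xi|^2(e^{2\sigma G_k(u^+)}-1)^2$ up to a constant times $\mathrm{meas}\{u\geq k\}$. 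Sobolev's inequality then yields the recursive estimate \rife{Alonso}, and Lemma~\ref{stampa} gives the uniform local bound. If you want to salvage a barrier argument, you would have to restrict to operators for which radial supersolutions and a weak comparison principle are actually available (e.g.\ $a=a(\nabla u)$ independent of $x$ and $s$), which is strictly less than what the theorem claims.
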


In order to prove this theorem, we need the following two lemmas.

\begin{lemma}[Lemma~3.3 of \cite{LEOBEN}]\label{leoni}\sl
Let $b:[0,+\infty) \to [0,+\infty)$ be a continuous  function,
satisfying the Keller-Osserman condition \rife{k-o}, such that
$\frac{b(s)}{s}$ is nondecreasing for large $s$. Then, for any $C>0$
and $\gamma\geq 0$, there exist a positive constant
$\Gamma$ and  a smooth function $\vp:[0,1]\longrightarrow [0,1]$
 ($\Gamma$ and $\varphi$ depending only on $b$, $C$ and
$\ga$), with $\vp(0)=\vp'(0)=0$, $\vp(1)=1$ and
$\varphi(s)>0$ for every $s>0$, satisfying
\begin{equation*}
\dys t^{\gamma +1 }\, \frac{\vp'(\tau)^2}{\vp(\tau)}\, \leq \,
\frac{1}{C}\, t^{\ga}\, b(t)\, \vp(\tau) +
\Gamma,\qquad \forall \tau \in (0,1], \, \forall t\geq 0.
\end{equation*}

\end{lemma}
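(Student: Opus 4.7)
The plan is to construct $\varphi$ out of a Keller--Osserman profile, so that the left and right hand sides of the desired inequality can be balanced uniformly in $\tau$ using the integrability $\int^{+\infty} dt/\sqrt{2B(t)}<+\infty$, where $B(t)=\int_0^t b(\sigma)\,d\sigma$. First I would introduce the nonincreasing function
$$
\Psi(s)=\int_s^{+\infty}\frac{dr}{\sqrt{2B(r)+1}},\qquad s\ge 0,
$$
which, by the Keller--Osserman assumption, is finite, strictly decreasing, smooth on $(0,+\infty)$, with $\Psi(+\infty)=0$ and $\Psi(0)=:R_0<+\infty$ (the $+1$ is added to avoid singular behavior near $s=0$, since no integrability of $1/\sqrt{B}$ near $0$ is assumed). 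Let $\psi:[0,R_0]\to[0,+\infty]$ be its inverse, so $\psi(0)=+\infty$, $\psi(R_0)=0$, and $\psi'(\tau)=-\sqrt{2B(\psi(\tau))+1}$.

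Next I would define $\varphi$ from $\psi$ by a formula of the shape
$$
\varphi(\tau)=\chi(\tau)\,\exp\!\bigl(-k\,\psi(\sigma(\tau))\bigr),\qquad \tau\in[0,1],
$$
where $\sigma:[0,1]\to[0,R_0]$ is a smooth increasing diffeomorphism with $\sigma(0)=0$, $\sigma(1)=R_0$, $k>0$ is a constant to be chosen depending on $C$ and $\gamma$, and $\chi$ is a smooth normalizing factor ensuring $\varphi(1)=1$ and $\varphi'(0)=0$ (for instance $\chi(\tau)=\tau^2\cdot \text{const}$, used to kill the remaining derivative at the origin). With this choice $\varphi(0)=0$, $\varphi>0$ on $(0,1]$, and
$$
\frac{\varphi'(\tau)^2}{\varphi(\tau)}\;\le\;K\,(2B(\psi(\sigma(\tau)))+1)\,\varphi(\tau)+\text{lower order},
$$
for some $K=K(k,\sigma,\chi)$, tying the LHS of the target inequality directly to $B\circ\psi$.

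The pointwise inequality is then verified by splitting the range of $t$. For each fixed $\tau$, set $T(\tau)$ to be the unique value where $b(T)/T=C\varphi'(\tau)^2/\varphi(\tau)^2$, which is well defined and finite because $b(s)/s$ is nondecreasing for large $s$ and tends to $+\infty$ (a straightforward consequence of the Keller--Osserman condition). For $t\ge T(\tau)$ the monotonicity of $b/\mathrm{id}$ gives $t\,\varphi'(\tau)^2/\varphi(\tau)^2\le b(t)/C$, which multiplied by $t^\gamma\,\varphi(\tau)$ is exactly
$$
t^{\gamma+1}\frac{\varphi'(\tau)^2}{\varphi(\tau)}\le \frac{1}{C}\,t^\gamma b(t)\,\varphi(\tau),
$$
so no additive constant is needed in this range. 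For $0\le t\le T(\tau)$ one has $t^{\gamma+1}\varphi'^2/\varphi\le T(\tau)^{\gamma+1}\varphi'(\tau)^2/\varphi(\tau)$, and the whole game reduces to checking that
$$
\Gamma:=\sup_{\tau\in(0,1]}T(\tau)^{\gamma+1}\frac{\varphi'(\tau)^2}{\varphi(\tau)}<+\infty.
$$

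The main obstacle is precisely this last uniform bound, which is where Keller--Osserman is used essentially: as $\tau\to 0^+$, $\psi(\sigma(\tau))\to+\infty$, so $T(\tau)\to+\infty$ and $\varphi'(\tau)^2/\varphi(\tau)^2$ also blows up, and these two divergences must cancel. Concretely, using $\varphi'/\varphi\sim k\sqrt{2B(\psi)+1}\,\sigma'$ and the defining relation of $T(\tau)$, the supremum above translates into an inequality of the form
$$
\bigl[b^{-1}\!\bigl(C k^2(2B(\psi)+1)(\sigma')^2\cdot\psi\bigr)\bigr]^{\gamma+1}\cdot k^2\bigl(2B(\psi)+1\bigr)(\sigma')^2\,\varphi\;\le\;\Gamma,
$$
and one chooses $\sigma$ (and $k$ if necessary) so that $\sigma'(\tau)$ decays fast enough as $\tau\to 0^+$ to absorb the growth in the first factor. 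The Keller--Osserman integrability is exactly what allows such a choice: it controls how fast $B(\psi(\tau))$ grows as $\tau\to 0^+$, and hence how fast $T(\tau)^{\gamma+1}$ can grow, so that the product stays bounded. With $\Gamma$ finite, the two regimes combine to give the claimed inequality for all $t\ge 0$ and $\tau\in(0,1]$, completing the proof.
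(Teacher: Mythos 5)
First, a point of reference: the paper does not prove this lemma at all --- it imports it as Lemma~3.3 of \cite{LEOBEN} and merely remarks that the proof there goes through under the weaker hypotheses stated here. So your proposal has to be judged on its own merits against the known construction in \cite{LEO,LEOBEN}.

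Your reduction is the correct skeleton: splitting the range of $t$ at the level $T(\tau)$ where $b(T)/T = C\varphi'(\tau)^2/\varphi(\tau)^2$ (which exists for large values of the right-hand side because the Keller--Osserman condition forces $b(s)/s\to+\infty$), handling $t\ge T(\tau)$ by monotonicity of $b(s)/s$, and observing that everything reduces to $\Gamma:=\sup_{\tau}T(\tau)^{\gamma+1}\varphi'(\tau)^2/\varphi(\tau)<+\infty$. The genuine gap is that this supremum \emph{is} the lemma, and you never establish it. You posit the ansatz $\varphi=\chi\,e^{-k\psi(\sigma(\tau))}$ and then say that one ``chooses $\sigma$ so that $\sigma'$ decays fast enough to absorb the growth,'' with the justification that Keller--Osserman ``is exactly what allows such a choice.'' That is an assertion of the conclusion, not a proof: the required bound couples $\bigl(b/\mathrm{id}\bigr)^{-1}$ evaluated at $Ck^{2}(2B(\psi)+1)(\sigma')^{2}$ with the exponential decay $e^{-k\psi}$ and the factor $\chi$, and it is not clear (and you do not check) that this can be made uniformly bounded for an arbitrary admissible $b$; the three quantities interact in a way that depends on the growth of $b$, and no candidate $\sigma$ is exhibited. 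Symptomatically, the integral $\int^{+\infty}dt/\sqrt{t\,b(t)}$ --- which, under the monotonicity of $b(s)/s$, is equivalent to the Keller--Osserman condition (see the Remarks following the lemma in the paper) and is the quantity through which Keller--Osserman actually enters the estimate --- never appears in your argument.

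A way to close the gap that stays close to your scheme is to build $\varphi$ \emph{implicitly} rather than through $\Psi$: impose that the two relations $b(T)/T=C\varphi'^{2}/\varphi^{2}$ and $T^{\gamma+1}\varphi'^{2}/\varphi=\Gamma$ hold with equality along the construction, which forces $\varphi=C\Gamma/(T^{\gamma}b(T))$ and $\varphi'=\sqrt{b(T)/(CT)}\,\varphi$, and then check that the induced change of parameter $d\tau=\dfrac{d\varphi/dT}{\sqrt{b(T)/(CT)}\,\varphi}\,dT$ gives a \emph{finite} total $\tau$-length as $T\to+\infty$; that finiteness is exactly the convergence of $\int^{+\infty}\bigl(\gamma/T+b'/b\bigr)\sqrt{T/b(T)}\,dT$, which after an integration by parts reduces to $\int^{+\infty}dT/\sqrt{T\,b(T)}<+\infty$, i.e.\ to Keller--Osserman. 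One then rescales to $[0,1]$ and smooths near $\tau=0$ to get $\varphi(0)=\varphi'(0)=0$. Without some such quantitative verification, your proof is incomplete at its central step.
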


\begin{remarks}\rm \begin{enumerate}

\item In Lemma~3.3 of \cite{LEOBEN} it is imposed  that   $b(s)$ is increasing, $b(0)=0$, and the function
$\frac{b(s)}{s}$ is nondecreasing  in $\mathbb{ R}^+$. However, it is easy to
see that the proof (see also \cite{LEO}) works by using the weaker assumptions of
Lemma~\ref{leoni}.
\item In addition, also in \cite{LEO},  the Keller-Osserman condition is replaced by the following one:
$$
\displaystyle \int^{+\infty} \frac{ds}{\sqrt{s b(s)}} <+ \infty .
$$
Note that, as a consequence of the monotonicity of $b(s)$ for large $s$,
the above assumption is equivalent to \rife{k-o}.
\end{enumerate}
\end{remarks}

Let us recall a local version of a classical result by Stampacchia we will use
in the following.

\begin{lemma}[\cite{S}] \label{stampa}\sl
Let $\tau (j,\rho)\ :[0,+\infty)\times[0,R_0) \longrightarrow \mathbb R $ be
a function such that $\tau (\cdot,\rho) $ is nonincreasing and $\tau
(j,\cdot) $ is nondecreasing. Moreover, suppose that there exist $K_0 >0$, $\mu>1,$ and $C,\nu,\ga >0$ satisfying
$$
\tau (j,\rho) \leq C \frac{ \tau (k,R)^\mu}{(j-k)^\nu (R-\rho)^\ga} ,\quad
\forall j>k>K_0 ,\, \forall \ 0< \rho < R < R_0.
$$
Then for every $  \delta \in (0,1)$, there exists $ d>0$ such that:
$$
\tau (K_0+ d,(1-\delta)R_0) =0\,,
$$
where $d^\nu = 2^{(\nu+\gamma)\frac{\mu}{\mu-1}} C
\frac{\left(\tau(K_0,R_0)\right)^{\mu-1}}{\delta^\gamma R_0^\gamma}$.
\end{lemma}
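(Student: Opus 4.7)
The plan is to carry out a standard De Giorgi--Stampacchia geometric iteration on two sequences, one increasing in the level $j$ and one decreasing in the scale $\rho$, and to use the monotonicity of $\tau$ in each variable to pass to the limit.

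First I would introduce the sequences
$$
j_n = K_0 + d\bigl(1 - 2^{-n}\bigr), \qquad \rho_n = (1-\delta)R_0 + \delta R_0\, 2^{-n},
$$
so that $j_0 = K_0$, $\rho_0 = R_0$, $j_n \uparrow K_0 + d$, $\rho_n \downarrow (1-\delta)R_0$, and $j_{n+1}-j_n = d\, 2^{-n-1}$, $\rho_n - \rho_{n+1} = \delta R_0\, 2^{-n-1}$. Setting $\tau_n = \tau(j_n,\rho_n)$, the assumed recurrence (applied with $j=j_{n+1}$, $k=j_n$, $\rho=\rho_{n+1}$, $R=\rho_n$) yields
$$
\tau_{n+1} \leq \frac{C\, 2^{(n+1)(\nu+\gamma)}}{d^{\nu}\,\delta^{\gamma} R_0^{\gamma}}\, \tau_n^{\mu}.
$$

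Next I would prove by induction the decay estimate $\tau_n \leq \tau_0\, q^{-n}$, where $q = 2^{(\nu+\gamma)/(\mu-1)} > 1$. Indeed, assuming the bound at step $n$, plugging into the recurrence gives
$$
\tau_{n+1} \leq \frac{C\,\tau_0^{\mu-1}}{d^{\nu}\,\delta^{\gamma} R_0^{\gamma}}\,2^{(\nu+\gamma)}\, q\,\cdot\, \tau_0\, q^{-(n+1)},
$$
after using $q^{-n\mu+n+1} = q\cdot q^{-n(\mu-1)} = q\cdot 2^{-n(\nu+\gamma)}$. The induction step therefore closes as soon as the prefactor is $\le 1$, i.e.
$$
d^{\nu} \geq C\,\frac{\tau_0^{\mu-1}}{\delta^{\gamma} R_0^{\gamma}}\, 2^{\nu+\gamma}\, q = 2^{(\nu+\gamma)\mu/(\mu-1)}\,C\,\frac{\tau(K_0,R_0)^{\mu-1}}{\delta^{\gamma} R_0^{\gamma}},
$$
which is exactly the value of $d$ in the statement. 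The base case $n=0$ is the trivial identity $\tau_0 = \tau(K_0,R_0)$.

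Finally I would let $n \to \infty$. By the monotonicity hypotheses ($\tau$ nonincreasing in the first slot, nondecreasing in the second), for every $n$
$$
0 \leq \tau(K_0+d,(1-\delta)R_0) \leq \tau(j_n,(1-\delta)R_0) \leq \tau(j_n,\rho_n) = \tau_n \leq \tau_0\, q^{-n}\ \xrightarrow[n\to\infty]{}\ 0,
$$
whence $\tau(K_0+d,(1-\delta)R_0) = 0$. There is no real obstacle; the only delicate point is the bookkeeping of the exponents in the inductive step, which must be balanced so that $d$ comes out with exactly the stated power $2^{(\nu+\gamma)\mu/(\mu-1)}$. Note also that the restriction $j_n > K_0$ (needed to apply the hypothesis) is satisfied for all $n\ge 1$, while for $n=0$ one uses the recurrence with $k=K_0$ and $R=R_0$ (allowed since $K_0$ is the strict lower bound required of $k$, which we may assume by a harmless limit or by checking that the initial value $\tau(K_0,R_0)$ is the one involved in the formula for $d$).
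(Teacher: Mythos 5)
Your proof is correct: the geometric choice of levels and radii, the inductive decay estimate $\tau_n\leq \tau_0 q^{-n}$ with $q=2^{(\nu+\gamma)/(\mu-1)}$, and the final passage to the limit via the two monotonicity assumptions all check out, and the exponent bookkeeping reproduces exactly the stated value of $d$. The paper itself gives no proof of this lemma (it is quoted from Stampacchia's monograph \cite{S}), and your argument is precisely the classical iteration used there, with the endpoint cases $k=K_0$, $R=R_0$ correctly absorbed by monotonicity.
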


\begin{proof}[{\sl Idea of the Proof of Theorem \ref{thleoni}}]
The proof of this result is essentially contained in \cite{LEO}, but
for the convenience of the reader, we include here the proof of the
exact result that we have used in the proof of
Proposition~\ref{prop2} and in the proof of Theorem~\ref{notdiv}.

Actually we deal with equation
\begin{equation*}
-{\rm div} (\widetilde M(x,u)\nabla u )+  P(x)\cdot \D u +f(x) b(u)=
0 , \ \ {\rm in}\ \Omega\,,
\end{equation*}
where $\widetilde M(x,s)$ satisfies \rife{coe}, $P(x)$ is a bounded
vector field, $b(s)$ { is increasing and} satisfies the
Keller-Osserman condition \rife{k-o}, $\frac{b(s)}{s}$ is
nondecreasing for $s$ large, and $f$ satisfies $\rife{cona}$.
Consequently all the assumptions of the theorem are satisfied. We
remind that the above assumptions are satisfied by the functions
$\widetilde M(x,s)$, $b(s)$ and $f(x)$, appearing in Proposition
\ref{prop2} as well as by the functions $M(x), P(x), b(s)$ and
$f(x)$ appearing in Theorem~\ref{notdiv}.

Suppose now that $u^+\in L^\infty_{\mbox{\tiny loc}}(\Omega)$ and
$b(u^+)\in L^1_{\mbox{\tiny loc}}(\Omega)$. We set
$\omega\subset\subset\omega'\subset\subset\Omega$ and a cut-off
function $ \eta (x)$ such that $0\leq \eta\leq 1$ and
\begin{equation}\label{tha11}
\eta (x) = \left\{
\begin{array}{l}
1, \quad x\in\omega,   \\
    0, \quad x\in \Omega \backslash \omega'.
\end{array}\right.
\end{equation}

\noindent We denote $p_0={\|P(x)\|_{(\elle{\infty})^N}}$ and we fix
$\sigma>\frac{2{p_0}}{\alpha}$ and the constants $C, k_0$ such that
$$
\frac{ \|\nabla\eta\|^{2}_{\elle{\infty}}}{8 \sigma^2}
\left[\frac{\beta^2}{\alpha} +\left(\alpha -\frac{{p_0}}{\sigma}
\right)\right]\frac1C + \frac{p_0}{4\sigma b(k_0)}\leq
\frac{m_{\omega'}(f)}{2\sigma},
$$
and we also consider the function $\varphi$ given by
Lemma~\ref{leoni} with $\gamma =1$ and this constant $C$. Note that
if $\xi=\sqrt{\varphi(\eta)}$, then { $u\xi^2 = u\varphi (\eta )\in
H_{0}^1(\Omega)$} and
\begin{equation}\label{tha12}
\nabla (u \xi ^2) = \left\{
\begin{array}{cl}
\xi^2 \nabla u + 2\xi u \nabla \xi, &\mbox{if } \xi (x)>0,\\
0, &\mbox{if } \xi (x)=0 ,
\end{array}\right.
\end{equation}
a.e. in $\Omega$.  Moreover $f(x) (e^{2\sigma G_k(u^+)} -1)
b(u^+)\in L^1_{\mbox{\tiny loc}}(\Omega)$ and consequently  $v=
\frac{1}{2\sigma} (e^{2\sigma G_k(u^+)} -1) \xi^2$,
$\sigma>\frac{2{p_0}}{\alpha} $ is an admissible test function.
Using this test function as well as \rife{coe}, \rife{cona} and
Young's inequality we deduce that

$$
\begin{array}{lll}
 &{\alpha} & \dys  \int_{\omega'} |\nabla G_k(u^+)|^2 e^{2\sigma G_k(u^+)} \xi{  ^2}
+\frac{m_{\omega'}(f)}{2\sigma} \int_{\omega'} b(u^+)(e^{2\sigma G_k(u^+)} -1) \xi^2
\\[1.5 ex]
& \leq &\dys  
 \frac{ \beta}{{\sigma}}
\int_{\omega'} {  |\nabla \xi | | \D G_k(u^+)| } (e^{2\sigma
G_k(u^+)} -1) \xi 
+\, {p_0}  \int_{\omega'} | \D
G_k(u^+)| v
 \,
\\[1.5 ex]
&\leq & \dys  \frac{\alpha}2 \int_{\omega'} |\nabla G_k(u^+)|^2 e^{2\sigma
G_k(u^+)} \xi{^2}+ \frac{\beta^2}{2\alpha
\sigma^2}\int_{\omega'}\frac{|\nabla \xi |^2 (e^{2\sigma G_k(u^+)}
-1)^2}{e^{2\sigma G_k(u^+)}}
\\[1.5 ex]
&&\dys   + \frac{{p_0} }{{2\sigma}}\int_{\omega'} | \D G_k(u^+)|^2
(e^{2\sigma G_k(u^+)} -1) \xi^2 + \frac{{p_0}
}{{4\sigma}}\int_{\omega'} (e^{2\sigma G_k(u^+)} -1) \xi^2
 \,
\\[1.5 ex]
&\leq & \dys    \frac{\sigma \alpha+p_0}{2\sigma}
\int_{\omega'} |\nabla G_k(u^+)|^2 e^{2\sigma G_k(u^+)} \xi{^2} +
\frac{\beta^2}{2\alpha \sigma^2}\int_{\omega'}|\nabla \xi |^2
(e^{2\sigma G_k(u^+)} -1)^2
\\[1.5 ex]
 && \dys + \frac{{p_0} }{{4\sigma}}\int_{\omega'} (e^{2\sigma G_k(u^+)}
-1) \xi^2\,.\end{array}
$$

In other words,
$$
\begin{array}{c}
\displaystyle \frac{1}{4\sigma^2} \left( \alpha-
\frac{{p_0}}{\sigma}\right) \int_{\omega'} |\nabla [ (e^{\sigma
G_k(u^+)}-1) \xi] |^2 +\frac{m_{\omega'}(f)}{2\sigma} \int_{\omega'}
b(u^+)(e^{2\sigma G_k(u^+)} -1) \xi^2
\\\\
\displaystyle \, \leq \left[\frac{\beta^2}{2\alpha \sigma^2}
+\frac1{2\sigma^2} \left(\alpha -\frac{{p_0}}{\sigma} \right)\right]
\int_{\omega'} { |\nabla \xi |^2 }  (e^{2\sigma G_k(u^+)}-1)^2 +
 \frac{{p_0}}{2} \int_{\omega'} v.
 \end{array}
$$

Applying Lemma \ref{leoni} with $\ga=1$, together with the
monotonicity of $b(s)$ we get
$$
\frac{1}{4\sigma^2} \left( \alpha- \frac{{p_0}}{\sigma}\right)
\int_{\omega'} |\nabla [ (e^{\sigma G_k(u^+)}-1) \xi] |^2
$$
$$ \leq \dys
 \Gamma \frac{
\|\nabla\eta\|^{2}_{\elle{\infty}}}{8 \sigma^2}
\left[\frac{\beta^2}{\alpha} +\left(\alpha -\frac{{p_0}}{\sigma}
\right)\right] \mbox{meas\,} \{x\in\omega' : u(x)\geq k\}\, ,
$$
for every $k>k_0 (m_{\omega'}(f), {p_0})$. We deduce by Sobolev's
inequality that
\begin{eqnarray*}
\dys \left(\int_{\omega} |(e^{\sigma G_k(u^+)}-1) { \xi}|^{2*} \right)^{\frac{2}{2*}} \leq
 {
    C_0 } \mbox{ meas\,}\{x\in\omega' : u(x)\geq k\} ,
\end{eqnarray*}
where $C_0 =  \mathcal{S}^2  \Gamma
\frac{\|\nabla\eta\|^{2}_{\elle{\infty}}}{8\sigma^2}
\left[\frac{\beta^2}{\alpha} +\left(\alpha -\frac{{p_0}}{\sigma}
\right)\right]$. Hence, using that $e^t-1\geq t$, for every $t\geq
0$, and that $G_k(s) \geq j-k $  for $s\geq j>k$ we derive that
\begin{equation} \label{Alonso}
(j-k)^{2} \mbox{ meas\,} \{x\in\omega : u(x)\geq j\}^{\frac{2}{2^*}}
 \leq
 \frac{ C_0}{\sigma} \mbox{ meas\,}
\{x\in\omega' : u(x)\geq k\} \, .
\end{equation}

\noindent  Now, if $\omega\subset\subset\Omega$ is fixed, we
consider $R=\mbox{dist\,} (\omega , \partial \Omega)/2$, the set
$$
\omega_r = \{ x\in \Omega : \mbox{dist\,} (x,\omega ) <r\} \subset\subset \Omega
$$
and the function
$$
\tau (k,r) =  \mbox{meas\,}  \{x\in\omega_r : u(x)\geq k\},
$$
for every $r\in (0,R]$ and $k>0$. Taking $\omega = \omega_r$ and
$\omega'= \omega_R$ in \rife{Alonso} and choosing $\eta$ such that
$\|\nabla\eta\|_{\elle{\infty}}\leq \frac{c}{R-r}$, we obtain
$$
(j-k)^{2} \tau (j,r)^{2/2^*}
                     \leq
                     {   c_1}
\frac{\tau (k,R)}{(R-r)^2} \,
$$
for some $c_1>0$ and the proof is concluded by applying
Lemma~\ref{stampa}.
\end{proof}

\begin{remarks}  \label{rmke}
{\rm
\begin{enumerate}
\item { We remark explicitly that in the above proof the constant
$\Gamma$ obtained by applying  Lemma~\ref{leoni} depends  on $m_{\omega'}(f)$.
In particular, since $m_{\omega'}(f)\leq m_{\omega}(f)$ for $\omega\subset
\subset \omega'$, if $m_{\omega}(f)$ tends to zero, then this constant $\Gamma$
and hence  the a priori estimate $C_\omega$ given by Theorem~\ref{thleoni}
diverge to $+\infty$. }

\item By adding a condition on the function $b(s)$ for negative $s$ and
using similar ideas to these ones in the above proof, it is possible
to give also a priori estimates of the whole $L^{\infty}$ norm of
the solution in every  compact subset $\omega$ of $\Omega$. More
precisely, if, in addition to the hypotheses of
Theorem~\ref{thleoni}, we  strengthen \eqref{g1}  by imposing that
  \begin{equation}\label{g1prima}
\begin{array}{l}
b(s) \mbox{ is increasing and satisfies \rife{k-o}, } b(s)/s \mbox{ is
nondecreasing},
\\
\mbox{ for large  $s$ and for every }\omega \subset \subset \Omega \mbox{ there
exists $m_{\omega}>0$ such that:}
\\
\forall s\in \re, \quad B(x,s)\, \mbox{sign} \, s\geq m_{\omega} b(|s|)\geq 0
\quad \mbox{for a.e. } x\in \omega ,
\end{array}
\end{equation}
then for every $\omega \subset \subset \Omega$ there exists $C_\omega >0 $ such
that
$$
|u(x)|\leq C_{\omega},\quad \forall x\in \omega\,.
$$
\end{enumerate}
}
\end{remarks}

Theorem~\ref{thleoni} is an extension to quasilinear equations of the
well-known local a priori estimate of Keller \cite{K} and Osserman \cite{o}
(see also \cite{BaM}, \cite{MV}, \cite{MV2}, \cite{jlv}, \cite{Ve} and
the references cited therein)  for semilinear operators. This {  semilinear a
priori estimate} was the crucial tool in order to prove the existence of a
large solution, i.e., a solution $u$ of the semilinear equation satisfying
$u=+\infty$ at $\partial \Omega$ in the sense that
\begin{equation*}
\lim_{\mbox{{\tiny dist}} (x,\partial \Omega ) \to 0} u(x) =+\infty.
\end{equation*}
Thus, it is natural to ask whether it is also possible to prove the existence
of a large solution for \rife{ls}. Clearly, in this nonlinear framework we have
to specify the meaning we give to \lq\lq infinity\rq\rq  at $\partial \Omega$,
since it has no sense pointwise. Actually we will assume such a condition in a
weak sense, through a condition on the trace on the boundary of the truncation
of the solution. Specifically we consider the following equation
\begin{equation}\label{ls2}
-\div (a(x,u ,\D u)) +B(x,u) =F (x), \quad x\in \Omega.
\end{equation}

\begin{definition}\label{defi}\rm
An a.e. finite function $u(x)$ such that $ T_k (u)\in H^1 (\Omega)$ $ \forall
k>0$   is  a {\sl distributional large solution} for \rife
{ls2} with $F\in L^1_{\rm loc} (\Omega)$, if:\\
i) $|a(x,u,\D u)| \in L^1_{\rm loc} (\Omega)$, $B(x,u) \in L^1_{\rm loc} (\Omega)$;\\
ii)
$$
\intO a(x,u,\D u)\cdot \D \varphi \,  + \intO B(x,u) \varphi \, = \intO F\,
\varphi \,  ,
\qquad \forall \varphi\in C^{\infty}_c (\Omega)\,;$$\\
iii) $\forall k>0$, $k-T_k (u) \in H^1_0 (\Omega)$.
\end{definition}

\begin{remark}
{\rm In the above definition, iii) has the meaning of \lq\lq infinity at
$\partial \Omega$\rq\rq. We mention that this definition of  explosive boundary
condition has already been introduced in \cite{tom}, for a different class of
nonlinear elliptic equations involving nonlinear \lq\lq coercive\rq\rq{}
gradient terms. }
\end{remark}

We conclude by observing that even if not explicitly written in   \cite{LEO},
all the estimates that we need in order to prove the existence of large
solutions for \rife{ls2} have been proved and thus we have the following
result.
\begin{theorem}\label{existls}\sl
Suppose that $a(x,s, \varsigma)$ and $B(x,s)$ satisfy  \rife{a1}, \rife{a2},
\rife{a3}, \rife{g1prima} and
\begin{equation}\label{g2}
\sup_{|s|\leq k }
|B(x,s)|\in L^1  (\Omega),\quad \forall k>0.
\end{equation}
Assume also that $F\in L^1_{\rm loc} (\Omega)$ with $F^- \in L^1 (\Omega)$.
Then there exists a  distributional large solution for $\rife{ls2}$.
\end{theorem}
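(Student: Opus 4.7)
The plan is to construct a large solution as a monotone limit of solutions to auxiliary Dirichlet problems with boundary data diverging to infinity, relying on Theorem~\ref{thleoni} (in the two-sided form described in Remark~\ref{rmke}) for uniform interior bounds. More precisely, for each $n\in\na$, I would set $F_n=T_n(F)$ and first solve
$$
\begin{cases}
-\div (a(x,u_n,\nabla u_n))+B(x,u_n)=F_n & \text{in }\Omega,\\
u_n=n & \text{on }\partial\Omega.
\end{cases}
$$
Existence of a bounded $u_n$ follows by writing $u_n=n+w_n$ with $w_n\in \huz$, noting that the shifted nonlinearity still satisfies the sign condition in \rife{g1prima}, and applying a standard Leray--Schauder or monotonicity argument together with an $L^\infty$ bound obtained via $G_k(w_n)$ as a test function (the bound $\sup_{|s|\leq k}|B(x,s)|\in L^1(\Omega)$ from \rife{g2} is what makes this step routine).

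Next I would establish monotonicity $u_n\leq u_{n+1}$ by a comparison argument: since $B(x,\cdot)\,\mathrm{sign}(\cdot)$ is nonnegative and dominated from below by the increasing function $b(|\cdot|)$, and $a$ is strictly monotone by \rife{a3}, testing with $(u_n-u_{n+1})^+\in\huz$ gives $u_n\leq u_{n+1}$ a.e. Once this is known, Theorem~\ref{thleoni} combined with Remark~\ref{rmke}(2) (applied to the equation with the bounded right-hand side $F_n$, whose essential infimum on compactly contained subsets is controlled locally) yields a constant $C_\omega$, independent of $n$, such that $|u_n|\leq C_\omega$ on every $\omega\subset\subset\Omega$. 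Hence $u_n\uparrow u$ pointwise in $\Omega$ to a function $u$ which is locally bounded.

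I would then use cut-off test functions $\varphi_k=(T_k(u_n)-T_k(u_m))\eta^2$ (with $\eta\in C^\infty_c(\Omega)$) as in the proof of Theorem~\ref{thleoni}, together with the local $L^\infty$ bound, to get that $\{u_n\}$ is Cauchy in $H^1_{\mathrm{loc}}(\Omega)$; in particular $\nabla u_n\to\nabla u$ a.e., and by dominated convergence $a(x,u_n,\nabla u_n)\to a(x,u,\nabla u)$ in $L^2_{\mathrm{loc}}$ and $B(x,u_n)\to B(x,u)$ in $L^1_{\mathrm{loc}}$ (using the continuity of $B(x,\cdot)$ and the local uniform bound on $u_n$). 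This allows passage to the limit in the weak formulation against any $\varphi\in C^\infty_c(\Omega)$, yielding condition (ii) of Definition~\ref{defi}; conditions (i) follow immediately from the local bounds.

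The main obstacle, and the genuinely nontrivial part, is verifying the explosive boundary condition (iii). For this I would fix $k>0$; then for every $n\geq k$, one has $T_k(u_n)=k$ on $\partial\Omega$, so $k-T_k(u_n)\in\huz$. Testing the equation for $u_n$ with $k-T_k(u_n)$ and exploiting \rife{a1} together with the sign of $B(x,u_n)(k-T_k(u_n))$ yields
$$
\alpha\intO|\nabla T_k(u_n)|^2\leq \intO F^+(k-T_k(u_n))+\intO\sup_{|s|\leq k}|B(x,s)|\cdot k,
$$
whose right-hand side is bounded uniformly in $n$ thanks to $F^-\in L^1(\Omega)$ (which controls the sign of the integral) and \rife{g2}. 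Hence $\{k-T_k(u_n)\}$ is bounded in $\huz$; since $T_k(u_n)\to T_k(u)$ a.e.\ (and in $L^2(\Omega)$ by dominated convergence), the limit $k-T_k(u)$ lies in the weak closure of $\huz$ and therefore in $\huz$ itself, which is exactly condition (iii). This closes the proof, once one observes that the whole construction works for any sequence $F_n\to F$ in $L^1_{\mathrm{loc}}$ with $F_n^-$ bounded in $L^1(\Omega)$.
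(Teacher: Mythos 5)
Your strategy hinges on the claim that Theorem~\ref{thleoni} together with Remark~\ref{rmke}(2) gives a constant $C_\omega$ \emph{independent of $n$} with $|u_n|\leq C_\omega$ on every $\omega\subset\subset\Omega$. This is where the argument breaks down. The local a priori estimate of Theorem~\ref{thleoni} requires the right-hand side to be controlled by some $F_0\in L^q_{\rm loc}(\Omega)$ with $q>\frac N2$, and the resulting constant $C_\omega$ depends on $\|F_0\|_{L^q(\omega')}$. Here $F$ is only assumed to be in $L^1_{\rm loc}(\Omega)$ (with $F^-\in L^1(\Omega)$), so although each $F_n=T_n(F)$ is bounded, its $L^q_{\rm loc}$ norms blow up as $n\to\infty$; no uniform local $L^\infty$ bound is available, and indeed the limit $u$ need not be locally bounded at all (the definition of distributional large solution only asks that $u$ be a.e.\ finite with $T_k(u)\in H^1(\Omega)$). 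Everything downstream of this claim — dominated convergence for $a(x,u_n,\nabla u_n)$ and for $B(x,u_n)$ "using the local uniform bound on $u_n$" — therefore collapses. A secondary but also genuine gap is the monotonicity step: $F_{n+1}\geq F_n$ fails wherever $F<0$, and testing with $(u_n-u_{n+1})^+$ does not yield comparison when $a(x,s,\varsigma)$ depends on $s$, since \rife{a3} is monotonicity in $\varsigma$ only; the cross terms $\bigl(a(x,u_n,\nabla u_n)-a(x,u_{n+1},\nabla u_{n+1})\bigr)\cdot\nabla(u_n-u_{n+1})^+$ have no sign.

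The paper circumvents both problems. It never proves local boundedness or monotonicity of $u_n$. Instead, after the (correct, and essentially identical to yours) estimate obtained by testing with $k-T_k(u_n)$, it tests with $T_k(u_n\xi^2)$ and invokes Lemma~\ref{leoni} directly — not Theorem~\ref{thleoni} — to obtain local Marcinkiewicz bounds on $u_n$ and $|\nabla u_n|$; these give convergence in measure and hence a.e.\ convergence of $u_n$ along a subsequence. Almost everywhere convergence of the gradients then comes from the Boccardo--Gallou\"et lemma (once the lower order term is shown bounded in $L^1_{\rm loc}$ via the test function $\frac1\eps T_\eps(u_n)\xi$), and the $L^1_{\rm loc}$ compactness of $B(x,u_n)$ is obtained by an equiintegrability argument (testing with $T_1(G_h(u_n\xi^2))$ and applying Vitali's theorem), not by dominated convergence. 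Your verification of the boundary condition (iii) is sound and matches the paper, but to repair the proof you would need to replace the uniform $L^\infty$/monotonicity scheme by this Marcinkiewicz--compactness machinery, or else strengthen the hypothesis on $F$ to $L^q_{\rm loc}$ with $q>\frac N2$.
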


\proof[{\sl  Proof.}] We consider the following sequence of
problems
$$
\left\{
\begin{array}{cl}
- \mbox{\rm div } a(x,u_n,\D u_n  ) + B(x,u_n)  = F_n & \mbox{in }\ \Omega, \\
u_n - n \in \huz, &
\end{array}\right.
$$
where $ F_n =T_n (F)$. Since $B(x,s+n)s\geq 0$ for large $|s|$, the
existence of a weak solution $u_n\in H^1(\Omega)\cap \lio$ is a consequence of
\cite{b6} (Theorem 6.1), i.e. $u_n-n\in H_0^1(\Omega)$ and it satisfies
\begin{equation}\label{wap}
\begin{array}{c}
\intO a(x,u_n, \D u_n )\cdot \D v  \,
+ \intO B(x,u_n)v \,
 =\intO  F_n v \,
 , \,  \forall v\in H_0^1 (\Omega)\cap \lio .
\end{array}
\end{equation}
Observing that for any  $n\geq k$, $k-T_k (u_n)\in H_0^1(\Omega)\cap\lio$,  we
can choose $v=k-T_k (u_n)$ as test function in \rife{wap} and we  obtain,
$$
- \intO a(x,u_n,\D u_n  )\cdot  \D T_k (u_n)    \,
 + \intO B(x,u_n) [k-T_k (u_n)]  \,
= \intO  F_n [k-T_k (u_n)] \,.
$$
Using  \rife{a1}, and  \rife{g1} and  \rife{g2} we have:
\begin{equation*}
\al\intO | \D T_k (u_n)|^2  \,  \leq 2k\intO \sup_{|s|\leq k}|B(x,s)| \, + 2k
\| F_n^- \|_{\elle1}.
\end{equation*}
Thus, for every $k\in \na$, we can now extract a subsequence (not relabeled) of
$\left\{T_k(u_n)\right\}_{n\in \na}$ that weakly converges in $H^1 (\Omega)$
and, by Rellich theorem, strongly in $\elle{2}$.

Now, consider any sets $\omega \subset \subset \omega' \subset \subset \Omega$,
a cut-off function $\eta(x)$ chosen as in \rife{tha11} and $\xi=
\sqrt{\varphi(\eta)}$.  Arguing as in \rife{tha12}, we deduce that   $v= T_k
(u_n \xi^2) $ is an admissible test function for \rife{wap}. Thus we have  
$$
\int_{A_k} a(x,u_n, \D u_n )\cdot \D[ u_n \xi^2]   \,
+ \intO B(x,u_n)T_k (u_n \xi^2)   \,
\leq k\|F\|_{L^1(\omega')}\,,
$$
where  $A_k = \left\{
             x\in \Omega :
             |u_n|\xi^2\leq k
             \mbox{ and }
             \xi(x) >0
             \right\}$, 
and so, using \rife{a1} and \rife{g1prima}, we get 
\begin{equation*}
\begin{array}{c}
\dys  \alpha \int_{A_k} | \D u_n|^2  \xi^2   \,
 + m_{\omega'} \int_{A_k} |b(u_n)| T_k (u_n \xi^2)  \\[1.5 ex]
\dys    \leq   k\|F\|_{L^1(\omega')} +     2\beta \int_{A_k  } |  \D u_n || \D
\xi| \, u_n \xi  \,  .
  \end{array}
\end{equation*}
By applying Young inequality, \rife{a2} and Lemma \ref{leoni} (with  $\gamma=1$ and  for any fixed $C>\frac{\alpha^2+4 \beta^2}{8\alpha m_{\omega'} }
\|\D \eta \|_{L^{\infty} (\omega')}$ and taking into account
{ Remarks~\ref{rmke}-2}) we deduce that there exists
$c>0$ such that
\begin{eqnarray*}
\intO |\D T_k( u_n\xi^2)|^2   \, \leq  c  (k+1).
\end{eqnarray*}
Then, using that $\xi=1$ in $\omega$, by Lemmas~4.1 and 4.2 of \cite{b6} it
follows that $u_n$ and $|\D u_n|$ are bounded respectively in
$\mathcal{M}^{\frac{N}{N-2}}(\omega)$ and $\mathcal{M}^{\frac{N}{N-1}}
(\omega)$, for any $\omega\subset\subset\Omega$. Combining this information
with the strong convergence of $T_k (u_n)$ in $L^2 (\Omega)$ we deduce that
$u_n$ is a Cauchy sequence in measure and so, up to subsequences (not relabeled),
it converges for a.e. $x\in \Omega$ to a function $u  \in W^{1,q}_{\rm loc}
(\Omega)$. This, in particular, implies that
\begin{equation*}
\lim_{n \to +\infty}
k-T_k (u_n) =
k-T_k (u) \quad \mbox{weakly in } \huz\,,
\end{equation*}
i.e. $u$ satisfies the boundary condition.

On the other hand, we prove that the lower order term is bounded in
$L^1_{\mbox{\tiny loc}} (\Omega )$; indeed,  if, for $\varepsilon>0$, we take
$v=\frac{1}{\eps} T_{\eps} (u_n)\xi$ as test function in \rife{wap} (as
before, such a function it is admissible). Thus, by \rife{a1}, \rife{a2}, and  dropping positive
terms, we get
$$
\begin{array}{c}
 \intO B(x,u_n) \frac{ T_{\eps} (u_n)  }{\eps} \xi  \,
 \leq
\|F\|_{L^1 (\omega')}  +  \beta \|\D \xi \|_{L^{\infty} (\omega')}
\int_{\omega'} |\D u_n| \, .
\end{array}
$$
Since  the right hand side is bounded being $\{|\D u_n|\}$
bounded in ${\mathcal M}^{\frac{N}{N-1}}_{\rm loc} (\Omega)$ and $F\in \loc1$,
letting $\eps\to0$, we deduce by Fatou lemma that there exists $c_{\omega}>0$ such that
$$
\dys  \int_{\omega} | B (x,u_n)| \leq c_{\omega}.
$$
On the other hand,  choosing $v=T_1 (G_h (u_n \xi^2))$ as test function, where
$\xi^2= {\varphi (\eta)}$  we have, by using \rife{a1}, \rife{a2},
\rife{g1prima} and  \rife{g2},
$$
\frac{\alpha}{2} \int_{h\leq |u_n\xi^2|\leq h+1} | \D  u_n|^2 \xi^2
+\frac12 \intO B(x,u_n)T_1 (G_h (u_n \xi^2))
$$
$$
\leq  \int_{ \omega'\cap  \{  u_n\xi^2 \geq h\}} | F_n |+ \frac{2\beta^2}{\alpha} \|\D \eta\|^2_{\lio} \mbox{ meas} \{ x\in \omega'\,:\, \xi^2 |u_n |\geq h \} .
$$
By the strong compactness of  $\{F_n\}$  in $L^1 (\omega')$ and the local
uniform estimate of $\{u_n\}_{n\in \na}$ in $\mathcal{M}^{\frac{N}{N-2}}_{\rm
loc} (\Omega)$, we derive then that
$$
\lim_{h\to +\infty} \sup_{n\in \na } \int_{\{ x\in \omega : \, |u_n |\geq h \}}
|B(x,u_n)| \,  =0.
$$
As a consequence of   Vitali theorem we deduce that
$\{|B(x,u_n)|\}_{n\in \na}$ is strongly compact in $L^1 (\omega')$, where
$\omega' \subset \subset \Omega$ is arbitrary.
Moreover, since the lower order term is bounded in $L^1_{\rm loc} (\Omega)$, we
can apply Lemma~1 in \cite{bga} in order to prove that $\D u_n$ converges to
$\D u$ a.e. in $\Omega$. This, and the weak convergence of $ u_n$ in
$W^{1,q}(\omega')$, $\forall \omega' \subset \subset \Omega$,  imply
$$
u_n \longrightarrow u \quad \mbox{in}\quad W^{1,q}(\omega),  \quad \forall
1\leq q<\frac{N}{N-1}, \quad \forall  \omega \subset \subset \Omega,
$$
and, thanks to  \rife{a2}, we also have that
\begin{equation}\label{convergea}
a(x,u_n, \D u_n )\longrightarrow a(x,u,\D u) \quad \mbox{in}\quad L^1
(\omega)^N , \quad \forall \omega \subset \subset \Omega.
\end{equation}

Now we can pass to the limit in the distributional formulation: indeed choosing
any $\phi \in C^{\infty}_c (\Omega)$ in \rife{wap} we have
$$
\intO a(x,u_n, \D u_n )\cdot \D \phi   \, + \intO B(x,u_n)\phi   \,   =\intO
F_n \phi \,  .
$$
Using \rife{convergea} we deduce that
$$
\dys \lim_{n\to +\infty}
\int_{\mbox{\tiny supp } \phi }  a(x,u_n, \D u_n )\cdot \D \phi  \, =   \int_{\mbox{\tiny supp } \phi }  a(x,u , \D u )\cdot \D \phi
\, .
$$
Moreover, by the strong convergence of $\{B(x,u_n)\}$ and $\{F_n \}$ in
$L^1_{\rm loc} (\Omega)$, we deduce that
$$
\dys \lim_{n\to +\infty}
\int_{\mbox{\tiny supp } \phi } F_n\, \phi  \, =  \int_{\mbox{\tiny supp } \phi }  F\,  \phi  \,
$$
and
$$
\dys \lim_{n\to +\infty}
 \int_{\mbox{\tiny supp } \phi}  B(x,u_n)  \phi  \, =  \int_{\mbox{\tiny supp } \phi }  B(x,u ) \phi  \,
$$
and this concludes the proof.
\qed

\end{document}